\newtheorem{theorem}{Theorem}[section]
\newtheorem{corollary}{Corollary}[theorem]
\newtheorem{claim}[theorem]{Claim}
\newtheorem{subclaim}[theorem]{Subclaim}
\newtheorem{lemma}[theorem]{Lemma}
\newtheorem{fact}[theorem]{Fact}
\newtheorem{proposition}[theorem]{Proposition}
\newtheorem{definition}{Definition}[section]
\theoremstyle{definition}
\newtheorem{example}[theorem]{Example}
\def\Ind#1#2{#1\setbox0=\hbox{$#1x$}\kern\wd0\hbox to 0pt{\hss$#1\mid$\hss}
\lower.9\ht0\hbox to 0pt{\hss$#1\smile$\hss}\kern\wd0}
\def\ind{\mathop{\mathpalette\Ind{}}}
\def\notind#1#2{#1\setbox0=\hbox{$#1x$}\kern\wd0
\hbox to 0pt{\mathchardef\nn=12854\hss$#1\nn$\kern1.4\wd0\hss}
\hbox to 0pt{\hss$#1\mid$\hss}\lower.9\ht0 \hbox to 0pt{\hss$#1\smile$\hss}\kern\wd0}
\def\nind{\mathop{\mathpalette\notind{}}}
\patchcmd{\subsection}{-.5em}{.5em}{}{}
\title{Generic Expansions and the Group Configuration Theorem}
\author{Scott Mutchnik}
\begin{document}

\begin{abstract}
We exhibit a connection between geometric stability theory and the classification of unstable structures at the level of simplicity and the $\mathrm{NSOP}_{1}$-$\mathrm{SOP}_{3}$ gap. Particularly, we introduce generic expansions $T^{R}$ of a theory $T$ associated with a definable relation $R$ of $T$, which can consist of adding a new unary predicate or a new equivalence relation. When $T$ is weakly minimal and $R$ is a ternary fiber algebraic relation, we show that $T^{R}$ is a well-defined $\mathrm{NSOP}_{4}$ theory, and use one of the main results of geometric stability theory, the \textit{group configuration theorem} of Hrushovski, to give an exact correspondence between the geometry of $R$ and the classification-theoretic complexity of $T^{R}$. Namely, $T^{R}$ is $\mathrm{SOP}_{3}$, and $\mathrm{TP}_{2}$ exactly when $R$ is geometrically equivalent to the graph of a type-definable group operation; otherwise, $T^{R}$ is either simple (in the predicate version of $T^{R}$) or $\mathrm{NSOP}_{1}$ (in the equivalence relation version.) This gives us new examples of strictly $\mathrm{NSOP}_{1}$ theories.

\end{abstract}
\maketitle

\section{Introduction}

This paper connects two subfields of model theory: geometric stability theory and the classification theory of \textit{unstable} structures. Geometric stability theory, an excellent exposition of which is given in \cite{P96}, relates pregeometries in stable theories, such as the pregeometry defined by algebraic closure on a strongly or weakly minimal structure, to the global structure of those theories. One of the most important theorems of geometric stability theory is the \textit{group configuration theorem} of Hrushovski, which says that the incidence pattern of four lines in a projective plane, viewed entirely from within the geometric structure of the algebraic closure in a stable theory, must arise from a type-definable group:

\begin{fact}\label{gct}
    (Group Configuration Theorem, Hrushovski (\cite{Hr92})): Let $T$ be a stable theory and $a, b, c, x, y, z$ nonalgebraic tuples. Suppose, in the below Figure 1, that any three noncollinear points are independent, but any point is in the algebraic closure of any other two points on the same line. Then for some parameter set $A$ independent from $abcxyz$, there is some connected group $G$ type-definable over $A$ so that, for $a', c', x'$ independent generics of $G$ over $A$ and $b' = c'\cdot a'$, $x'=a'\cdot y'$ and $b' = z' \cdot y'$, each of $a, b, c, x, y, z$ is individually interalgebraic over $D$ with, respectively, $a', b', c', x', y', z'$. 

    \begin{figure}
        \centering

\tikzset{every picture/.style={line width=0.75pt}} 

\begin{tikzpicture}[x=0.75pt,y=0.75pt,yscale=-1,xscale=1]

\draw    (90,89) -- (39,187) ;
\draw    (90,89) -- (142,187) ;
\draw  [dash pattern={on 0.84pt off 2.51pt}]  (64.5,138) -- (142,187) ;
\draw  [dash pattern={on 0.84pt off 2.51pt}]  (116,138) -- (39,187) ;

\draw (86,69.4) node [anchor=north west][inner sep=0.75pt]    {$a$};
\draw (47,126.4) node [anchor=north west][inner sep=0.75pt]    {$b$};
\draw (121,124.4) node [anchor=north west][inner sep=0.75pt]    {$x$};
\draw (26,185.4) node [anchor=north west][inner sep=0.75pt]    {$c$};
\draw (142,184.4) node [anchor=north west][inner sep=0.75pt]    {$y$};
\draw (85,153.4) node [anchor=north west][inner sep=0.75pt]    {$z$};

\end{tikzpicture}

        \caption{The basic case of the group configuration. Diagram based on \cite{Bays15}.}
        
    \end{figure}
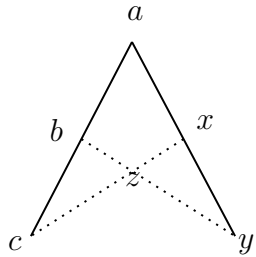
\end{fact}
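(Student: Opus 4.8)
The plan is to follow Hrushovski's classical route: convert the purely combinatorial incidence data of Figure~1 into a family of germs of definable bijections, observe that composition of germs then furnishes a generic, generically associative multiplication on a complete type, and finally invoke the Weil--Hrushovski group-chunk theorem to produce $G$. I pass to $T^{\mathrm{eq}}$ and work inside a monster model; since the conclusion is insensitive to replacing any of $a,b,c,x,y,z$ by an interalgebraic copy over the base and to enlarging the base by a parameter set independent from $abcxyz$, I may normalize freely. (I read the ``$D$'' in the statement as the announced base $A$.) Stability enters through stationarity, definability of types, the forking calculus, and the group-chunk theorem.

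First I would manufacture germs. Consider the line $\{a,x,y\}$: by hypothesis $x\in\operatorname{acl}(a,y)$, $y\in\operatorname{acl}(a,x)$ and $a\in\operatorname{acl}(x,y)$, so $a$ is (the canonical parameter of) a finite-to-finite definable correspondence between a realization of $\operatorname{tp}(y)$ and one of $\operatorname{tp}(x)$. Over a generic independent base $A$, stationarity lets me replace this correspondence by the germ of an invertible definable function, and the standard device of quotienting by the finite ``same-germ'' equivalence relation makes that germ \emph{canonical}, i.e.\ interalgebraic with $a$ over $A$. Carrying this out for all four lines and all their incidences, each of $a,b,c,x,y,z$ becomes the canonical parameter of such a germ (each point, lying on two lines, carries two); after identifying the relevant sorts through these correspondences, all the germs lie in a single composable family $\calF$.

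The heart of the argument is to read the four-line pattern of Figure~1 as the assertion that the six germs themselves form the configuration of a group together with its generics, the group operation being realized by germ composition. Concretely, using the lines $\{a,b,c\}$, $\{b,z,y\}$ and $\{x,z,c\}$ alongside $\{a,x,y\}$, and the independence of noncollinear triples, I would select germs $\sigma_a,\sigma_b,\sigma_c,\sigma_x,\sigma_y,\sigma_z$ (one among the two each point carries) and verify composition relations of group-configuration shape --- for instance $[\sigma_b]=[\sigma_c]\cdot[\sigma_a]$, $[\sigma_x]=[\sigma_a]\cdot[\sigma_y]$, $[\sigma_b]=[\sigma_z]\cdot[\sigma_y]$ --- together with the attendant independence and algebraicity data (so that $\{a,c,x\}$ is independent over $A$, while $b\in\operatorname{acl}(a,c,A)$, and so on). \textbf{This bookkeeping is the step I expect to be the main obstacle.} It is a delicate exercise in the calculus of forking and of germs --- continuity and composition of germs, transitivity of canonicity, the choice of the right germ among those a point carries, and the tracking of which locus each germ maps between --- and it is precisely here that the ``noncollinear triples are independent'' and ``each point is algebraic over two others on its line'' hypotheses get consumed.

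Granting that configuration of germs, the remainder is soft. Composition of functions is associative, so the induced partial multiplication on the complete type of a generic germ is generically associative, and the algebraicity data give generic cancellation and invertibility; one thus has a generic group chunk, and the Weil--Hrushovski group-chunk theorem produces a connected type-definable group $G$, over a base $A$ independent from $abcxyz$, whose generic multiplication extends germ composition. To finish I would unwind the dictionary: let $a',c',x'\in G$ be the generics corresponding to the canonical germs $\sigma_a,\sigma_c,\sigma_x$ --- they are independent over $A$ since $\{a,c,x\}$ is a noncollinear, hence independent, triple --- and let $b',y',z'$ be determined by $b'=c'\cdot a'$, $x'=a'\cdot y'$, $b'=z'\cdot y'$. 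The composition relations of the previous step say precisely that $b',y',z'$ correspond to the canonical germs $\sigma_b,\sigma_y,\sigma_z$; hence, by canonicity, each of $a,b,c,x,y,z$ is individually interalgebraic over $A$ with $a',b',c',x',y',z'$ respectively, which is the conclusion of the theorem.
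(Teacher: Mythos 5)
This statement is quoted in the paper as a Fact, with a citation to \cite{Hr92}; the paper supplies no proof and uses it as a black box in the proof of Theorem \ref{main}, so there is nothing in the paper to measure your write-up against. What you have written is an outline of the classical argument from the literature (Hrushovski's germ-and-group-chunk proof, as exposited e.g.\ in Pillay's \emph{Geometric Stability Theory} and in \cite{Bays15}), and the route you describe --- pass to $T^{\mathrm{eq}}$, normalize over an independent base, replace each point by the germ of an invertible definable map, read the four-line incidence pattern as composition relations among germs, and finish with the Hrushovski--Weil group chunk theorem --- is the right one.

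But as a proof it has a genuine gap, and you have located it yourself: the step you flag as ``the main obstacle'' is not bookkeeping, it is the theorem. Two things are elided. First, the passage from a finite-to-finite correspondence to an invertible germ whose canonical parameter is interalgebraic with $a$ is not delivered by stationarity plus ``quotient by the same-germ relation'' alone: in the standard argument one must repeatedly replace points of the configuration by interalgebraic or merely \emph{equivalent-configuration} substitutes (typically after base change by an independent copy of part of the configuration), prove that the modified sextuple is again a group configuration, and only then does one obtain germs of invertible functions coded by the new points; the germ attached to the original $a$ need not be interalgebraic with $a$ before these reductions. Second, the verification that the chosen germs satisfy, generically, relations such as $[\sigma_b]=[\sigma_c]\cdot[\sigma_a]$, that the family $\{\sigma_{a_1}^{-1}\sigma_{a_2}\}$ is generically closed under composition, and that the composite is independent from each argument in the way the group chunk theorem requires, is precisely where the hypotheses ``noncollinear triples are independent'' and ``each point is algebraic over the other two on its line'' are consumed; these forking computations are the mathematical content, and without them the appeal to the group chunk theorem has nothing to apply to. So your proposal is a correct road map of Hrushovski's proof, but the proof itself is deferred rather than supplied.
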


This result has been generalized to some unstable contexts, such as simple theories \cite{BYTW02}, o-minimal theories \cite{Pet19}, and generically stable types \cite{Wan22}. In the following, we will show that the original group configuration theorem for \textit{stable} theories has applications to classification theory outside of the stable or even simple context.

One central question in the classification theory of unstable structures, much of which was initiated alongside the classification of stable theories by Shelah \cite{Sh90}, asks which classification-theoretic properties are equivalent and which are distinct. For example, until recently it was open whether the class $\mathrm{NSOP}_{1}$ was equal to $\mathrm{NSOP}_{2}$, and it remains open whether $\mathrm{NSOP}_{2}$ is equal to $\mathrm{NSOP}_{3}$ (\cite{DS04}); it is also open whether $\mathrm{NSOP}_{n}$ $\mathrm{NTP}_{2}$ theories are simple for $n \geq 3$ (\cite{Che14}). In the applied setting, there has also been interest in determining the classification-theoretic complexity of structures, including finding new examples of strictly $\mathrm{NSOP}_{1}$ theories. Several new examples have recently been found using generic constructions, such as algebraically closed fields of prime characteristic with a generic additive subgroup (\cite{D18}, \cite{D19}), generic incidence structures (\cite{CoK19}), generic expansions by Skolem functions (\cite{KR18}), and the $\omega$-free PAC fields (\cite{CR15}, further developed in \cite{KR17}; see also \cite{Cha99}, \cite{Cha02}), to give some examples. We will introduce the first examples, to our knowledge, where the classification-theoretic property $\mathrm{NSOP}_{1}$ results from geometric stability theory, particularly the group configuration theorem. At the boundary of $\mathrm{NSOP}_{1}$, the possible levels of classification-theoretic complexity have been characterized for theories with a suitable notion of \textit{free amalgamation}. Evans and Wong (\cite{EW09}) show that the $\omega$-categorical Hrushovski constructions introduced by Evans (\cite{Ev02}) are either simple or $\mathrm{SOP}_{3}$, and Conant (\cite{Co15}) show that modular theories satisfying some abstract free amalgmation axioms are either simple, or both $\mathrm{SOP}_{3}$ and $\mathrm{TP}_{2}$; in \cite{GFA}, the author generalizes the work of Evans and Wong, and of Conant, to potentially strictly $\mathrm{NSOP}_{1}$ theories, giving partial results on the equivalence of $\mathrm{NSOP}_{1}=\mathrm{NSOP}_{2}$ and $\mathrm{NSOP}_{3}$ covering most of the known examples of $\mathrm{NSOP}_{4}$ theories. We will introduce a family of structures defined by generic constructions, in particular the expansion of stable structures by new generic predicates or equivalence relations, whose complexity will be characterized by this $\mathrm{NSOP}_{1}-\mathrm{SOP}_{3}$ dichotomy; \textit{which} side of the dichotomy a structure in this family lies on will be characterized by the group configuration theorem.

The expansion of a theory by generic relations or function symbols was introduced by Winkler (\cite{W75}), and was studied by Chatzidakis and Pillay (\cite{CP98}) in the case of a unary predicate, which was shown to preserve simplicity. Later, Kruckman and Ramsey (\cite{KR18}) showed that expansions by generic function symbols, which covers generic equivalence relations considered as a unary function to a new sort, preserve the property $\mathrm{NSOP}_{1}$. The construction is to start with a theory $T$, add symbols to the language but no new axioms to get the theory $T_{0}$, and take the model companion $T'$ of $T_{0}$, which exists whenever $T$ eliminates quantifiers and eliminates $\exists^{\infty}$. The setting for the correspondence between groups and classification theory will be the model companion $T'$ of an expansion $T_{0}$ of a theory $T$. However, new axioms, and not just new symbols, will be added to form $T_{0}$. Allowing any axioms quickly becomes complicated, as one can encode, say, automorphisms; see \cite{BSh01}, exposited in \cite{Pil01}; \cite{KikS02}, \cite{Ki00}, for some examples of the literature on the existence of model companions of theories with automorphisms, and \cite{CH99} for a particularly interesting example.  So instead of studying all possible new axioms, we add an $n$-ary relation $R$ definable in a theory $T$ with quantifier elimination, and add universal axioms of a particular form to get a new theory $T_{0}=T_{R}$. Namely, for $P$ a 
new unary relation symbol, we add $\forall \bar{x} (\bigwedge_{1 \leq i \neq j \leq n} x_{i} \neq x_{j} \wedge \bigwedge_{i=1}^{n} P(x_{i}) \rightarrow R(\bar{x}) )$ to get $T_{R}$, or alternatively, for $E$ a new binary relation symbol, we add that $E$ is an equivalence relation and $\forall \bar{x} \bigwedge_{1 \leq i\neq j \leq n} (x_{i} \neq x_{j} \wedge E(x_{i}, x_{j})) \rightarrow R(\bar{x})$ to get $T_{R}$. When $T$ is $\mathrm{nfcp}$, $T_{R}$ in either case will then have a model companion $T'=T^{R}$. 

The main result of this paper will be on the complexity of $T^{R}$, when $T$ is weakly minimal (so $\mathrm{nfcp}$, \cite{Ga05} as observed in \cite{CL20}) and $\neg R$ is a ternary \textit{fiber algebraic relation} (\cite{CS21}, Definition 3.1). Ternary fiber algebraic relations coincide with relations of rank $\leq 2$ in the strongly minimal case, and in general they include all graphs of group operations on unary definable sets: this result says that $T^{R}$ will be classification-theoretically complicated precisely when $\neg R$ \textit{is} geometrically equivalent to the graph of a group operation:

\begin{theorem}\label{main}
Let $T$ be weakly minimal and let $R$ be a ternary relation definable in $T$. Assume $\neg R$ is fiber-algebraic. Then the equivalence relation version of $T^{R}$ is $\mathrm{NSOP}_{1}$ if and only if there is no set of parameters $A$ over which $R$ is definable, and (rank-one) group $G$ type-definable (or definable, if $T$ is strongly minimal) over $A$, so that the coordinates of a point of $\neg R$ generic (that is, of full rank) over $A$ are individually interalgebraic with the coordinates of a point of the graph $\Gamma^{G}$ of the multiplication in $G$ generic over $A$. Otherwise, $T^{R}$ is $\mathrm{TP}_{2}$ and strictly $\mathrm{NSOP}_{4}$.

For the predicate version of $T^{R}$, this is the same, but replace ``$\mathrm{NSOP}_{1}$" with ``simple."
\end{theorem}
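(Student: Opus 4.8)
The plan is to prove the biconditional in two directions, treating the predicate and equivalence-relation versions in parallel and freely using that $T^{R}$ is a well-defined $\mathrm{NSOP}_{4}$ theory. The engine will be a hands-on independence relation $\ind^{K}$ on $T^{R}$: working in a monster model $\mathfrak{M}\models T^{R}$ with stable reduct $\mathfrak{M}_{0}\models T$, declare $A\ind^{K}_{C}B$ when $A$ is forking-independent from $B$ over $C$ in the weakly minimal $T$ \emph{and} the predicate $P$ (resp.\ the equivalence relation $E$) on $A\cup B$ over $C$ is the ``generic'' one, i.e.\ freely amalgamated subject only to the universal axioms of $T_{R}$. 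Since $\mathrm{acl}$ in $T^{R}$ agrees with $\mathrm{acl}$ in $T$ on the home sort (and is controlled on the $E$-quotient sort), weak minimality supplies a well-behaved pregeometry, and the easy axioms for $\ind^{K}$—invariance, monotonicity, symmetry, finite and local character, existence, extension—follow by transferring the corresponding facts from the superstable $T$ together with a genericity argument for $P$ (resp.\ $E$). For the predicate version this is set up to witness simplicity; for the equivalence-relation version one instead works with Kim-independence over models and verifies strong finite character and the witnessing property, so that $\ind^{K}$ coincides with Kim-independence.

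The crux of the ``no group $\Rightarrow$ tame'' direction is the independence theorem ($3$-amalgamation) for $\ind^{K}$ over a model $M$. Given $a\equiv_{M}a'$ with $a\ind^{K}_{M}b$, $a'\ind^{K}_{M}c$, $b\ind^{K}_{M}c$, stationarity in the stable reduct yields the unique $a''$ with $a''\equiv_{Mb}a$, $a''\equiv_{Mc}a'$, $a''\ind_{M}bc$; the only thing to check is that the $P$-diagram (resp.\ $E$-diagram) on $a''bc$, already determined on $a''b$, $a''c$ and $bc$, can be completed generically on the genuinely new triples—those meeting each of the ``new'' parts of $a''$, $b$, $c$. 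A completion fails only when such a triple is forced into $P$ (resp.\ into one $E$-class) while lying in $\neg R$; since $\neg R$ is fiber-algebraic this pins one coordinate to the algebraic closure of the other two, and unwinding the resulting chain of forced $\neg R$-instances and $E$-identifications produces a finite configuration of $\neg R$-tuples whose incidence and independence pattern is exactly that of Figure~1. Applying Fact~\ref{gct} and naming the finitely many parameters then yields a connected rank-one group $G$ type-definable over some $A$ over which $R$ is definable, with the coordinates of a generic point of $\neg R$ over $A$ coordinatewise interalgebraic with those of a generic point of $\Gamma^{G}$. Contrapositively, if no such group exists then $3$-amalgamation holds, so $T^{R}$ is simple in the predicate case and $\mathrm{NSOP}_{1}$ in the equivalence-relation case.

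For the converse, suppose $G$ is such a group. After adding $A$ and passing through the geometric equivalence, one may assume $\neg R$ is, up to finite-to-finite correspondence, the graph $\{(x,y,z):z=x\cdot y\}$ of multiplication in $G$, so that $P$ (resp.\ each $E$-class) is a generic ``product-free'' subset of $G$. To see $T^{R}$ is $\mathrm{TP}_{2}$, build an array from mutually $G$-generic independent parameters whose defining formula records membership of a group-translate in $P$ (resp.\ $E$-equivalence of a translate to a fixed point): a single row is inconsistent because two of its instances would force a product-triple into $P$ / into one $E$-class, while any transversal is consistent because the model companion lets one freely prescribe the $P$-status (resp.\ the $E$-class) along the single remaining independent direction. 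To see $T^{R}$ is $\mathrm{SOP}_{3}$, use the group to manufacture a definable quasi-order: iterating translation gives an infinite ``ladder'' of group elements along which the generic product-free constraint propagates transitively in one direction but not the other, producing the $\mathrm{SOP}_{3}$ tree pattern; since $T^{R}$ is $\mathrm{NSOP}_{4}$, this makes it strictly $\mathrm{NSOP}_{4}$.

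The main obstacle is the middle step of the second paragraph: reading a genuine group configuration off a failure of $3$-amalgamation. One must choose the elements entering the failed amalgam carefully—localizing to non-algebraic, suitably independent coordinates—so that the six points produced satisfy \emph{both} the collinearity requirement (each ``line'' an instance of $\neg R$, hence by fiber-algebraicity giving pairwise interalgebraicity on a line) and the general-position requirement (noncollinear triples independent) demanded by Fact~\ref{gct}, and then one must verify that the group it returns satisfies $\neg R\approx\Gamma^{G}$ rather than merely interpreting some group. A secondary difficulty is the bookkeeping in the equivalence-relation version, where transitivity of $E$ forces identifications—and hence forbidden triples—invisible in the predicate version, so the analysis of the obstruction must track $E$-classes rather than a single set; one must also confirm that weak minimality (giving $\mathrm{nfcp}$, superstability, and a rank function compatible with fiber-algebraicity) suffices to run the group configuration theorem, which is available since $T$ is in particular stable.
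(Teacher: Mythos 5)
Your overall architecture in the ``tame'' direction is the paper's: characterize the complexity of $T^{R}$ through an independence relation inherited from $T$, identify the only obstruction to $3$-amalgamation as a forced $\neg R$-triple, and feed the resulting configuration into Fact~\ref{gct}. But the two steps you yourself label ``the main obstacle'' are precisely the real content of the proof, and you leave them unproven. From a failed amalgam one only gets algebraically closed $A_{1},A_{2},A_{3}$, independent over the base but of arbitrary finite $U$-rank, together with $a_{1},a_{2},a_{3}$ satisfying $\neg R(a_{1},a_{2},a_{3})$ and $a_{i}\in\mathrm{acl}(A_{j}A_{k})\setminus(A_{i}\cup A_{j})$; even producing this much requires amalgamating the three pairwise $\mathcal{L}^{*}$-types inside $T$ with control of $P$ (resp.\ $E$) off the pairwise closures, and, for the converse, an argument that compatible automorphisms of the $\mathrm{acl}(A_{i}A_{j})$ over the pairs extend simultaneously (the paper uses $\mathrm{dcl}(\mathrm{acl}(AB)\,\mathrm{acl}(AC))\cap\mathrm{acl}(BC)=\mathrm{dcl}(BC)$ for this; see Lemma~\ref{reduction2}). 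Moreover this configuration is \emph{not yet} a group configuration: on the ``line'' $\{a_{i},A_{j},A_{k}\}$ there is no reason that $A_{j}\in\mathrm{acl}(A_{k}a_{i})$ unless the $A_{i}$ have rank one over the base, so the hypotheses of Fact~\ref{gct} fail as stated. The reduction to $U(A_{i}/A)=1$ (the paper's Lemma~\ref{reduction3}, via minimal supports in an independent generating set together with fiber-algebraicity) is a genuine argument; your phrase ``localizing to non-algebraic, suitably independent coordinates'' does not supply it.

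The second gap is the complexity bound in the ``group exists'' case. You take $\mathrm{NSOP}_{4}$ as given, but it is part of the conclusion; in the paper it comes from the Conant-independence/free-amalgamation machinery (symmetry of Conant-independence plus the strong witnessing property), the same machinery that yields ``simple or $\mathrm{TP}_{2}$'' and ``$\mathrm{NSOP}_{1}$ or $\mathrm{SOP}_{3}$'', so that everything reduces to whether $\ind^{T}$ satisfies the independence theorem (Lemma~\ref{reduction1}, Corollary~\ref{classif}); the group is then used only to witness a \emph{failure} of that independence theorem via the standard group-to-configuration construction, not to build explicit patterns. Your proposed replacement---a direct $\mathrm{TP}_{2}$ array and an $\mathrm{SOP}_{3}$ ``ladder'' of translates---rests on the unjustified reduction that $\neg R$ is ``up to finite-to-finite correspondence the graph of multiplication'': the hypothesis only gives coordinatewise interalgebraicity of a generic point of $\neg R$ with a generic point of $\Gamma^{G}$ over additional parameters, for a merely type-definable $G$, and pushing the universal constraint of $T_{R}$ through that correspondence (and through the finite fibers, the $E$-transitivity bookkeeping, and the genericity of the model companion, which tends to make patterns consistent unless the axioms force otherwise) is exactly where the difficulty lies. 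Neither construction is carried out; in particular the $\mathrm{SOP}_{3}$ sketch names no definable relation and verifies no no-$3$-cycle condition, so as written the ``otherwise $\mathrm{TP}_{2}$ and strictly $\mathrm{NSOP}_{4}$'' half of the theorem is not established.
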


So among ternary relations $\neg R$ that have no trivial reason \textit{not} to be the graph of a group operation, classification theory at the level of $\mathrm{NSOP}_{1}-\mathrm{SOP}_{3}$ gap measures exactly when $\neg R$ is equivalent to the graph of a (rank-one) group operation.

Geometric properties of a stable theory are known to be connected to classification-theoretic properties of its expansions. For example, \cite{Bu91} (including a result due to Hrushovski) and \cite{Va03} relate the linearity of a theory $T$ to the rank of certain generic expansions of that theory by a \textit{model} (see also \cite{Bou89} for the relationship between the \textit{dimensional order property} of a theory and the stability of its expansions by models), and \cite{BB04} shows that the pregeometry of a strongly minimal set is trivial if and only if \textit{arbitrary} expansions of that theory by a unary predicate are stable. The literature on expansions is vast--see \cite{Ba09} for an overview--and includes connections between properties of stable theories and simplicity of expansions (for example, nfcp within stable theories and simplicity in pseudo-algebraically closed expansions, \cite{Pol05}). Our result is the first that we know of to relate geometric stability theory to the classification of unstable expansions of stable theories at the level of simplicity and the $\mathrm{NSOP}_{n}$ hierarchy.

We give an outline of the paper. In section 1, we define our setting for the generic expansion $T^{R}$ of a theory $T$, associated with the definable relation $R$. The connection between the property nfcp from \cite{Sh90}, which implies stability, and axiomatizability of generic expansions, was first demonstrated by Poizat (\cite{Poi83}) in his work on \textit{belles paires} of models. This is generalized to the simple case in \cite{BYPV02} using the weaker \textit{wnfcp}, and generalized further using the nfcp in \cite{KCHY21}; see \cite{CZ01}, \cite{BB04}, \cite{La13}, \cite{BL22} for examples of connections of nfcp to more general (not necessarily generic) expansions, and \cite{Ba09} for an overview of the connections between expansions and nfcp. Generalizing the arguments of Poizat, we show that when $T$ is $\mathrm{nfcp}$, and $R$ is a definable relation, both the predicate version and the equivalence relation version of $T^{R}$ exist. We also give a converse, encoding a partial automorphism of a linear order with the construction for $T^{R}$ when $T$ is unstable and showing that the model companion cannot exist using the argument from \cite{Ki00}. This gives us a new characterization of $\mathrm{nfcp}$ of independent interest. When $\neg R$ is a fiber-algebraic ternary relation, we observe that $T^{R}$ has quantifier elimination up to finite covers.\footnote{In fact, when $\neg R$ is a fiber-algebraic ternary relation, $\mathrm{nfcp}$ is not required for $T^{R}$ to be well-defined: $T^{R}$ is well-defined even when $T$ only eliminates $\exists^{\infty}$. Because the weakly minimal case, where our main result holds, is already $\mathrm{nfcp}$ (\cite{Ga05}, \cite{Co15}), we relegate this to an appendix. We would like to acknowledge Gabriel Conant for drawing our attention to this.}

In section 2, we consider relational expansions of $\mathrm{NSOP}_{1}$ theories with quantifier elimination and general free amalgamation properties. In \cite{GFA}, the notion of \textit{Conant-independence} was introduced as the extension of the \textit{Kim-independence} from \cite{KR17} beyond the $\mathrm{NSOP}_{1}$ theories. Using results from \cite{GFA}, we characterize Conant-independence in these theories, and show that the theory is either $\mathrm{NSOP}_{1}$, or both $\mathrm{TP}_{2}$ and strictly $\mathrm{NSOP}_{4}$; the underlying arguments for the classification-theoretic results come from Conant's work in \cite{Co15}, with a new lemma of the author from \cite{NSOP2} (which can itself be proven using the proof of Proposition 3.14 of \cite{KR17}; see Footnote 1 of \cite{INDNSOP3}, and \cite{Lee22}). Meanwhile, the most general version of the result on Conant-independence will come from an improvement, very similar to \cite{KTW22}, on the ``algebraically reasonable chain condition" that was applied to $\mathrm{NSOP}_{1}$ generic expansions in \cite{KR18}. This gives us a general criterion for classifying expansions of $\mathrm{NSOP}_{1}$ theories, which will be applied to the particular case of $T^{R}$ where $T$ is weakly minimal and $\neg R$ is a fiber-algebraic ternary relation.

Finally, in section $3$, we prove our main result, Theorem \ref{main}.

Notations are standard. We use $\bar{x}, \bar{y}, \bar{z}$, etc. and $\bar{a}, \bar{b}, \bar{c}$ to denote tuples of variables or constants, and $x, y, z$, $a, b, c$ to denote tuples or singletons depending on context.

\section{The model companion}

We define the general setting for this section and section 4. Throughout this paper, the theory $T$ will always have quantifier elimination in the language $\mathcal{L}$.

We start with the predicate version of this setting. Let $\delta(\bar{x})$ denote that the coordinates of $\bar{x}$ are distinct. Let $P$ be an additional unary predicate symbol and $\mathcal{L}^{P}=\mathcal{L} \cup \{P\}$. Let $R$ be a definable $n$-ary relation in $\mathcal{L}$. Define $T_{R}$ to be the $\mathcal{L}^{P}$-theory consisting of $T$ together with the axiom $\forall \bar{x} (\delta(\bar{x}) \wedge \bigwedge_{i=1}^{n} P(x_{i}) \rightarrow R(\bar{x}) )$. In words, $T_{R}$ will be the theory of models of $T$ together with a unary predicate $P$ so that any $n$-tuple with distinct coordnates in $P$ will belong to $R$. If $T_{R}$ has a model companion, we denote it $T^{R}$; in analogy with \cite{CP98}, $T^{R}$ will be the generic expansion of $T$ by a unary predicate, subject to a universal constraint.

Now we define the equivalence relation version of the setting. Let $E$ be an additional binary relation symbol and $\mathcal{L}^{E} = \mathcal{L} \cup \{E\}$. Let $R$ be a definable $n$-ary relation in $\mathcal{L}$. Define $T_{R}$ to be the $\mathcal{L}^{E}$-theory consisting of the axioms for $T$, the requirement that $E$ be an equivalence relation, and the axiom $\forall \bar{x} \bigwedge_{1 \leq i\neq j \leq n} (x_{i} \neq x_{j} \wedge E(x_{i}, x_{j})) \rightarrow R(\bar{x})$. In words, $T_{R}$ will be the theory of models of $T$ together with an equivalence relations $E$ so that any $n$-tuple of distinct elements of the same equivalence class will belong to $R$. If $T_{R}$ has a model companion, we denote it $T^{R}$.

The predicate and equivalence relation version of $T^{R}$ will have the same properties, except that the equivalence relation version can be strictly $\mathrm{NSOP}_{1}$, and the proofs for each version will be similar. When it is not clear from context, we will use $T_{R, P}$, $T^{R}_{P}$ to denote the predicate version and $T_{R, E}$, $T^{R}_{E}$ to denote the equivalence relation version.

We would like to know when $T^{R}$ exists. In fact, we characterize the theories that can only interpret theories $T$, so that $T^{R}$ always exists for any $R$ definable in $T$. We need the following classification theoretic property, from \cite{Sh90}:

\begin{definition}
    A formula $\varphi(x, y)$ has \emph{nfcp}, or the \emph{non-finite cover property}, if there is some $n$ so that any set $\{\varphi(x, b_{i})\}_{i \in I}$ is consistent if and only if it is $n$-consistent. A theory is \emph{nfcp} if every formula is nfcp.
\end{definition}

The following generalizes the direction (i) $\rightarrow$ (ii) of Theorem 6 of \cite{Poi01}; as expected, it uses the fact that consistency of a $\varphi$-type is definable in an nfcp theory.

\begin{proposition} \label{mc}
Let $T$ be an nfcp theory, and $R$ an $n$-ary relation definable in $T$. Let $\mathcal{L}$ be the language $\mathcal{L}_{0}$ of $T$ together with an additional symbol $P$ for a unary relation. Let $T_{R}$ be the theory in $\mathcal{L}$ of models $M \models T$ such that, for any $n$-tuple $\bar{a} \in P(M)$, $M \models R(\bar{a})$. Then the model companion $T^{R}$ of $T_{R}$ exists.
\end{proposition}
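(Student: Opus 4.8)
The plan is to build the model companion $T^R$ by exhibiting an explicit recursive axiomatization, following the classical existence criterion: a theory with quantifier elimination has a model companion iff one can axiomatize the class of existentially closed models, and the key is that ``being existentially closed'' becomes first-order expressible precisely because nfcp makes consistency of $\varphi$-types definable. First I would recall that $T_R$ is a universal theory over $T$ (the new axiom is $\Pi_1$), so every model of $T$ can be viewed as embedding into a model of $T_R$ — just take $P$ to be empty, or any ``$R$-clique'' — hence $T_R$ has models and is $\forall$-axiomatized modulo $T$. Next I would set up the standard machinery: for each $\mathcal{L}$-formula $\psi(\bar x, \bar y)$ and each quantifier-free $\mathcal{L}^P$-formula describing how finitely many new elements $\bar x$ sit inside $P$ or its complement relative to parameters $\bar y$, I want to express ``there exist $\bar x$ realizing this configuration'' by a first-order condition on $\bar y$. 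The obstacle is that the existence of a new element in $P$ consistent with a prescribed $\mathcal{L}$-type over existing parameters requires that the $\mathcal{L}$-type, together with the demand that $R$ holds on every $n$-subtuple drawn from $\bar x$ and the already-chosen elements of $P$, be consistent — and consistency of this (infinitary-looking, since $P(M)$ may be large) demand is where nfcp enters.

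The heart of the argument is the following: fix a tuple $\bar b$ of parameters and a finite partial specification of which coordinates of a would-be new tuple $\bar a$ lie in $P$. The requirement imposed by $T_R$ on putting $\bar a$'s $P$-coordinates into $P$ is a family of instances of the formula $R(\ldots)$ with some coordinates ranging over all of $P(M)$; by fiber-algebraicity-free generality we only know it is a conjunction of formulas $\varphi_k(\bar a, \text{—})$ as the ``—'' ranges over tuples from $P(M)$. By nfcp applied to the relevant formulas, consistency of such a family is equivalent to its $m$-consistency for a fixed $m$ depending only on the formulas, and $m$-consistency is a first-order condition (a finite disjunction of statements ``there exist $\le m$ witnesses ... such that ...''). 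Crucially, whether the $\mathcal{L}$-type we want to realize is consistent with $T$ together with these constraints — this is a property of finitely much data once nfcp collapses the infinite family — is then expressible by an $\mathcal{L}_0$-formula in $\bar b$, uniformly. I would then define $T^R$ to consist of $T_R$ together with, for every such $\psi$ and configuration, the sentence asserting: whenever $\bar b$ satisfies the (nfcp-definable) condition that the corresponding existential demand is consistent, then in fact such an $\bar a$ exists.

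Finally I would verify the two things that make this work: (1) every model of $T^R$ is existentially closed in $T_R$, because any quantifier-free $\mathcal{L}^P$-formula with parameters that is satisfiable in an extension reduces, via quantifier elimination in $T$, to a demand of the kind handled above, and the axioms of $T^R$ force a witness to already exist; and (2) $T^R$ is consistent and every model of $T_R$ embeds in one — here I would take a model of $T_R$, repeatedly realize all the required existential demands (a transfinite chain-of-length-$\omega$ construction, at each stage enlarging both the $\mathcal{L}$-structure and $P$), and take the union, checking that the union still models $T_R$ (the $R$-clique condition on $P$ is preserved under unions since it is $\Pi_1$ and each demand was only added when consistent with it) and satisfies all the axioms of $T^R$. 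The main obstacle, and the place the proof genuinely uses more than bookkeeping, is step (2) together with the definability claim: one must check that the finitely many ``$m$-consistency'' conditions obtained from nfcp really do capture full consistency of the demand including the interaction with already-placed $P$-points, i.e. that nfcp can be applied to the (finitely many) formulas that arise after pushing the $R$-constraints through quantifier elimination, uniformly in the parameters $\bar b$. This is exactly the generalization of (i)$\to$(ii) of \cite[Theorem 6]{Poi01} alluded to before the statement, and the argument there — definability of $\varphi$-type consistency in nfcp theories — is what I would adapt.
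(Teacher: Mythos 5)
Your plan is correct and follows essentially the same route as the paper's proof: both reduce existential closedness to realizability of a quantifier-free configuration by elements outside $M$, encode the constraint that every $n$-tuple drawn from the new $P$-points and the existing $P(M)$ satisfies $R$ as a family of instances of a single $\mathcal{L}$-formula (with $P$-membership of parameters coded by auxiliary variables), invoke nfcp to replace consistency of that family by $k$-consistency for a fixed $k$, which is $\mathcal{L}^{P}$-definable and yields the formula $\rho_{\varphi}(\bar{y})$, and then axiomatize $T^{R}$ by $T_{R}$ together with the sentences $\forall \bar{y}\,(\rho_{\varphi}(\bar{y}) \rightarrow \exists \bar{x}\,\varphi(\bar{y},\bar{x}))$. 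The only cosmetic difference is that you verify model-companionship directly via a chain construction, where the paper cites the standard claim (from Chatzidakis--Pillay) that these sentences axiomatize the existentially closed models of $T_{R}$.
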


\begin{proof}
(\textit{Equivalence relation version.})

The theory $T_{R}$ is a consistent theory: isolate each element of a model in its own $E$-equivalence class. (This was the purpose of requiring that the $x_{i}$ be distinct.) Since $T_{R}$ is formed from the model-complete theory $T$ by adding universal axioms, ascending chains of models of $T$ are again models of $T$. It follows that existentially closed models  of $T_{R}$ exist, and it remains to show that the class of existentially closed models is axiomatizable. Let $\varphi(\bar{y}, \bar{x})$ be a quantifier-free $\mathcal{L}^{E}$-formula. We will show that there is an $\mathcal{L}^{E}$-formula $\rho(y)$ such that, for $M \models T_{R}$, $M \models \rho(\bar{m})$ if and only if there is some extension $N \supseteq M$ with $N \models T_{R}$ so that $N \models \varphi(\bar{m}, \bar{n})$ for some $\bar{n} \in N$ with $\bar{n} \cap M = \emptyset$. This will be enough for us, by the following claim:

\begin{claim} 
Suppose that for each quantifier-free $\mathcal{L}^{E}$-formula $\varphi(\bar{x}, \bar{y})$, there exists a $\mathcal{L}^{E}$-formula $\rho_{\varphi}(\bar{y})$ as above. Then the sentences $\forall y (\rho_{\varphi}(\bar{y}) \rightarrow \exists \bar{x} \varphi(\bar{y}, \bar{x}))$ where $\varphi(\bar{y}, \bar{x})$ ranges over the quantifier-free $\mathcal{L}^{P}$-formulas, will axiomatize when a model $M \models T_{R}$ is existentially closed.
\end{claim}

\begin{proof}
    (Implicit in the proof of Theorem 2.4 of \cite{CP98}.) Clearly, an existentially closed model of $T_{R}$ satisfies these sentences. Conversely, let $M \models T_{R}$ satisfy these sentences. We show that $M$ is existentially closed. It suffices to show that for $\psi(\bar{y}; \bar{z} \bar{x})$ a quantifier-free $\mathcal{L}^{P}$-formula, $ M \subseteq N \models T_{R}$, $\bar{m}, \bar{a} \in M$, $\bar{b} \in N$ with $\bar{b} \cap N = \emptyset$, and $N\models \psi(\bar{m}, \bar{a}, \bar{b})$, there is some $\bar{b'} \in M$ so that $M\models \psi(\bar{m}, \bar{a}, \bar{b'})$. We just apply the hypothesis to $\varphi(\bar{y}\bar{z}; \bar{x}) =: \psi(y, zx)$, noting $M\models \rho_{\varphi}(\bar{m}\bar{a})$ in this case.
\end{proof}

Our strategy will be as follows. The existence of $ N \supseteq M$, with $\bar{n} \in N \models T_{R}$ and $\bar{n} \cap M = \emptyset$, so that $N \models \varphi(\bar{m}, \bar{n})$ will be equivalent to the consistency of a partial $\mathcal{L}$-type, consisting of instances of finitely many $\mathcal{L}$-formulas, where the parameters for those instances can be described in the language $\mathcal{L}^{P}$ in a way that is uniform in $\bar{m}$ and $M$. As the $\mathcal{L}$-formulas are $\mathrm{nfcp}$, the consistency of this type is equivalent to $n$-consistency of the type, which can be expressed by an $\mathcal{L}^{P}$-formula.

We may assume that $\varphi(\bar{y}, \bar{x})$ is a formula of the form $\psi(\bar{y}, \bar{x}) \wedge \eta(\bar{y}, \bar{x}) $ where 

\begin{itemize}
\item$\psi(\bar{y}, \bar{x})$ is a $\mathcal{L}$-formula that implies $\bar{y}, \bar{x}$ is a tuple of distinct elements, and

\item $\eta(\bar{y}, \bar{x})$ is a consistent boolean combination of instances of $E(x_{i}, y_{j})$ that completely describes the restriction of the equivalence relation $E$ to the variables $\bar{y}, \bar{x}$.
\end{itemize}

We will define a $\mathcal{L}$-formula $\tau(\bar{x}, \bar{y}, \bar{z})$. Here $\bar{z} = (z_{1}, \ldots, z_{n}, \bar{w}_{1}, \ldots, \bar{w}_{n})$, and $\bar{w}_{i}$ is an $N+1$-tuple of variables, where $C_{1}, \ldots C_{N}$ is an enumeration of the equivalence classes on the variables $\bar{y}, \bar{x}$ described by the formula $\eta(\bar{y}, \bar{x})$ and containing variables from $\bar{y}$. Let $\tau(\bar{x}, \bar{y}, \bar{z})$ express that

(a) $\models \psi(\bar{y}, \bar{x})$

(b) Let $\bar{a}$ be an $n$-tuple with distinct coordinates drawn from $\bar{x}$, whose coordinates are required by $\eta(\bar{y}, \bar{x})$ to belong to the same equivalence class, not containing any of the $\bar{y}$. Then $\models R(\bar{a})$

(c) $\bigwedge_{i\leq |\bar{x}|, j\leq n} x_{i} \neq z_{j}$

(d) Let $\bar{a}$ be an $n$-tuple, consisting of $\bar{x}$-coordinates all required by $\eta(\bar{y}, \bar{x})$ to belong to the equivalence class represented by $C_{i}$ for some fixed $i$, and $\bar{z}$ coordinates of the form $\bar{z}_{j}$ such that $\bar{w}_{j}$ consists of exactly $i$ many distinct elements. (So the $\bar{w}_{j}$ encode the indices of the $C_{1}, \ldots C_{N}$). Then $\models R(\bar{a})$.

Note that $\tau(\bar{x}, \bar{y}, \bar{z})$ can indeed be chosen to be a $\mathcal{L}$-formula, and not a $\mathcal{L}^{P}$-formula, because the equivalence relation $E$ itself is not referred to, only the requirements imposed by $\eta(\bar{y}, \bar{x})$. Note also that it can be chosen uniformly in $M$.

For $\bar{e} \in M$ with $|\bar{e}|=|\bar{y}|$, define a partial $\tau(\bar{x}, \bar{y} \bar{z})$-type $p(x, \bar{e})$ in the variables $\bar{x}$, with parameters in $M$, as follows:

Let $p(x, \bar{e})$ be the set of $\tau(\bar{x}, \bar{e}, \bar{b})$, where $(c_{1}, \ldots, c_{n}, \bar{d}_{1}, \ldots, \bar{d}_{n})=\bar{b} \in M$, and for $ j \leq n$, $\models E(c_{j}, e)$, where $e$ is any element of $\bar{e}$ required by $\eta(\bar{y}, \bar{x})$ to belong to the equivalence class $C_{i}$ on $\bar{y}, \bar{x}$, if and only if $\bar{d}_{j}$ consists of $i$ many distinct elements, and $\models  \wedge_{e \in \bar{e}}\neg E(c_{j}, e) $ if and only if $\bar{d}_{j}$ consists of $N+1$ many distinct elements.

In this form, it can be seen that for any $k$, there is an $\mathcal{L}^{P}$-formula $\rho_{k}(\bar{y})$ so that $\models \rho_{k}(\bar{e})$ if and only if $p(x, \bar{e})$ is $k$-consistent. But since $\tau(\bar{x}, \bar{y}\bar{z})$ is nfcp, there is some $k$ so that a $\tau$-type is consistent if and only if it is $k$-consistent. So there is $\rho(\bar{y})$ so that $\models \rho_{k}(\bar{e})$ if and only if $p(x, \bar{e})$ is consistent. Note that $\rho(\bar{y})$ can be chosen uniformly in $M$. We show $\rho(\bar{y})$ is as desired.

Note that $p(x, \bar{e})$ expressed the following conditions

(a$'$) $\models \psi(\bar{e}, \bar{x})$

(b$'$) Let $\bar{a}$ be an $n$-tuple with distinct coordinates drawn from $\bar{x}$, whose coordinates are required by $\eta(\bar{y}, \bar{x})$ to belong to the same equivalence class, not containing any of the $\bar{y}$. Then $\models R(\bar{a})$

(c$'$) $\bar{x} \cap M =\emptyset$ 

(d$'$) Let $\bar{a}$ be an $n$-tuple with distinct coordinates drawn from $M \cup \bar{x}$, consisting of $\bar{x}$-coordinates all required by $\eta(\bar{e}, \bar{x})$ to belong to the equivalence class of $e \in \bar{e}$, and elements of $M$ belonging to the equivalence class of $e$. Then $\models R(\bar{a})$.

It remains to show that the following are equivalent:

(i) there exists $ N \supseteq M$, with $\bar{n} \in N \models T_{R}$ and $\bar{n} \cap M = \emptyset$, so that $N \models \varphi(\bar{e}, \bar{n})$

(ii) $p(\bar{x}, \bar{e})$ is consistent

For (i $\Rightarrow$ ii), clearly $\bar{n}\models p(x, \bar{e})$. Conversely, let $M \prec N \models T$ be an $\mathcal{L}$-elementary of $N$, and $\bar{n}\models p(x, \bar{e})$. By (c'), $\bar{n} \cap M = 0$. Therefore, we can expand $M$ to a $\mathcal{L}^{E}$-extension of $M$ as follows: choose the finest equivalence relation on $N$ extending that on $M$, so that $N \models \eta(\bar{e}, \bar{n})$. By (a') and the fact that $\varphi(\bar{y}, \bar{e}) = \psi(\bar{y}, \bar{x}) \wedge \eta(\bar{y}, \bar{x})$,  $N \models \varphi(\bar{e}, \bar{n})$. To complete the proof of (ii) $\Rightarrow$ (i) it remains to show $N \models T^{R}$. Any elements of $\bar{n}$ equivalent to elements of $M$ are equivalent to some $e \in \bar{e}$, while any elements of $N \backslash (M \cup \bar{n})$ are isolated. So to show show that $N \models T^{R}$, we just need to show $N \models R(\bar{a})$ for $\bar{a}$ a tuple of distinct elements taken from an equivalence class of some $a \in \bar{a}$ not equivalent to any element of $M$, or from an equivalence class of $e \in \bar{e}$. The first case follows from (b'), and the second from (d').

(\textit{Predicate version.}) This is similar to the equivalence relation version, but less complicated, so we only give a sketch.

As in the equivalence relation version, it suffices to find $\rho(\bar{y})$. By the claim, it suffices to define those $\bar{m}$ so that $\varphi(\bar{m}, \bar{x})$ is realized in some model of $T_{R}$ extending $M$, by a tuple with no coordinates in $M$. We may assume that $\varphi(\bar{y}, \bar{x})$ is of the form $\psi (\bar{y}, x_{1}, \ldots x_{m}) \wedge \bigwedge_{1}^{k}P(x_{i}) \wedge \bigwedge_{k+1}^{m} \neg P(x_{i}) $ for $\psi (\bar{y}, x_{1}, \ldots x_{m})$ an $\mathcal{L}$-formula. Let $\tau(\bar{x}, \bar{y}, \bar{z})$ where $\bar{z} =  (z_{0}, z_{1}, \ldots, z_{n}, \bar{w}_{0}, \bar{w}_{1}, \ldots, \bar{w}_{n})$ and  $\bar{w}_{i}=(u_{i}, v_{i})$ be the $\mathcal{L}_{0}$-formula expressing the following: $\psi (\bar{y}, x_{1}, \ldots x_{m})$ is true, $\bigwedge_{i\leq |\bar{x}|, j\leq n} x_{i} \neq z_{j}$, and for each of the $n$-tuples $\bar{a}$ of distinct elements whose coordinates are among the $x_{1}, \ldots x_{k}$ together with those $z_{i}$, $1 \leq i \leq n$ with $u_{i} = v_{i}$, $M \models R(\bar{a})$. For $\bar{e} \in M$, let the set $p(\bar{x}, \bar{y})$ consist of those $\bar{z}$-instances of $\tau(\bar{y}, \bar{x}, \bar{z})$ over $M$ for which $z_{i} \in P(M)$ if and only if $u_{i} = v_{i}$. Then (i) $\varphi(\bar{e}, \bar{x})$ is realized in an extension of $M$ by a tuple with no coordinates in $M$ if and only if (ii) $p(\bar{x}, \bar{e})$ is consistent. For the direction (ii) $\Rightarrow$ (i), proceed as in the equivalence relation version, but choose $P(N) = P(M) \cup \bar{\bar{n}}$.  Because $T$ is nfcp, there is some $k$ not depending on $M$ so that, for $m \in M$, any family of $\bar{z}$-instances of $\tau(\bar{m}, \bar{y}, \bar{z})$ is consistent if it is $k$-consistent. But $k$-consistency of $p(\bar{m}, \bar{x})$ is expressible in $\mathcal{L}^{P}$.

\end{proof}

Note that nfcp is preserved under interpretation. Therefore, if $T$ is $\mathrm{nfcp}$, any theory that is interpretable in $T$ satisfies the conclusion of the previous proposition. We show a converse, which may be of independent interest. That is, if $T$ is not nfcp, then it defines a theory $T'$ such that, for some $T'$-definable relation $R$, $T'^{R}$ is not well-defined. Since this converse is not necessary for our main results, we will focus on proving the predicate version. The following is Theorems II.4.2 and II.4.4 of \cite{Sh90}:

\begin{fact}
    A theory $T$ is nfcp if and only if $T$ is stable and $T^{\mathrm{eq}}$ eliminates $\exists^{\infty}$. 
\end{fact}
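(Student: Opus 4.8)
The plan is to prove both implications by contraposition, organized around the \emph{finite cover property} (fcp), the negation of nfcp: $\varphi(x,y)$ has the fcp exactly when for every $n$ there is a family $\{\varphi(x,b_i):i\in I\}$ which is $n$-consistent but inconsistent, equivalently (unwinding, using compactness) a \emph{finite} minimally inconsistent such family of size exceeding $n$. I would first record the bookkeeping point that nfcp is insensitive to passing to $T^{\mathrm{eq}}$: given an $\mathcal{L}^{\mathrm{eq}}$-formula with the fcp, precompose each of its variables with the relevant $\emptyset$-definable quotient map out of a product of real sorts; the result is a formula with only real-sorted variables, hence equivalent to an $\mathcal{L}$-formula, and since every imaginary is the image of some real tuple its $n$-consistency and consistency agree with those of the original. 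So $T$ has the fcp if and only if $T^{\mathrm{eq}}$ does, and it suffices to prove: $T$ has the fcp $\iff$ $T$ is unstable, or $T^{\mathrm{eq}}$ fails to eliminate $\exists^{\infty}$.

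The implication ``failure of $\exists^{\infty}$-elimination in $T^{\mathrm{eq}}$ implies the fcp'' is short. If some $\mathcal{L}^{\mathrm{eq}}$-formula $\psi(x,z)$ has, for each $n$, a parameter $c_n$ with $\psi(x,c_n)$ of size exactly $n$, say with realizations $d_1,\dots,d_n$, then $\chi(x;z,w):=\psi(x,z)\wedge x\neq w$ has the fcp: the family $\{\chi(x;c_n,d_i):1\le i\le n\}$ is inconsistent, while deleting any one member leaves a family realized by the omitted $d_i$, and $n$ is unbounded; one then applies the previous paragraph. The implication ``instability implies the fcp'' is the classical half, and I expect it to be the main obstacle: from a formula $\varphi(x,y)$ with the order property one must produce a formula with the fcp, and this is not a one-step computation---reading a minimally inconsistent family directly off an order-indiscernible sequence tends to yield families that are already $2$-inconsistent, so that the honest witnesses are ``global'' obstructions, e.g.\ odd cycles for a formula with the independence property and order-theoretic configurations for a formula with the strict order property. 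This is the argument of Shelah \cite{Sh90}; one natural way to organize it is to first reduce, by Shelah's theorem, to a formula with the independence property or the strict order property and to treat each.

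For the converse I would assume $T$ stable and $\varphi(x;y)$ with the fcp, and aim to exhibit in $T^{\mathrm{eq}}$ a definable family of finite sets of unbounded cardinality. Heuristically, stability removes the ``global'' combinatorial obstructions of the previous paragraph, so the fcp must arise from the only remaining source, the local structure of $\varphi$-types: consistency of a $\varphi$-type over any parameter set is detected by the local $\varphi$-rank and $\varphi$-multiplicity, both finite, so an fcp configuration, not being bounded by any finite $n$, cannot be accounted for by these invariants and must localize to an honest finite definable object of unbounded size along the witnessing families. Concretely one works with the $\emptyset$-definable equivalence relation $E_\varphi(y,y'):=\forall x\,(\varphi(x,y)\leftrightarrow\varphi(x,y'))$ together with the boundedly many $\varphi$-types realized along the fcp-witnesses, and uses finiteness of the local ranks to extract a definable set---of chosen representatives of $E_\varphi$-classes, say---of cardinality unbounded over those witnesses, that is, a proof that $T^{\mathrm{eq}}$ does not eliminate $\exists^{\infty}$. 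Turning this heuristic into the genuine argument---carefully passing from the combinatorial fcp data to a definable family of finite sets of unbounded cardinality via finiteness of local $\varphi$-rank and $\varphi$-multiplicity in a stable theory---is the real content of this direction, and is Theorem II.4.2 of \cite{Sh90}.
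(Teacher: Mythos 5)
The paper offers no proof of this fact at all---it is quoted verbatim as Theorems II.4.2 and II.4.4 of \cite{Sh90}---and your proposal ultimately rests on exactly those results, with the routine glue (the transfer of the finite cover property between $T$ and $T^{\mathrm{eq}}$, and the step from failure of $\exists^{\infty}$-elimination to the fcp via the formula $\psi(x,z)\wedge x\neq w$) carried out correctly, so this is essentially the same approach as the paper. The only nitpick is that failure of $\exists^{\infty}$-elimination yields, for each $n$, a parameter whose fiber is finite of size at least $n$ rather than exactly $n$, which changes nothing in your construction.
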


\begin{lemma}
A theory is nfcp if and only if it is stable and, for any formula $\varphi(x, y, z)$ (where $x, y, z$ are tuples of variables) and $M \models T$, the set of $a \in M$ so that $\{\varphi(x, m, a): m \in \mathbb{M} \}$ is consistent, is definable.

\end{lemma}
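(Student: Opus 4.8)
The plan is to prove the two implications of the Lemma separately, leaning on the Fact of Shelah recalled just above for the harder direction.

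For $(\Rightarrow)$, assume $T$ is nfcp; then $T$ is stable by that Fact, and it remains to verify the definability condition. Fix a formula $\varphi(x,y,z)$ and apply the hypothesis that $T$ is nfcp to $\varphi$ itself, regarded as a formula in the variable $x$ with parameter variable $(y,z)$: there is $n=n_{\varphi}$ so that every family $\{\varphi(x,b_i,c_i):i\in I\}$ is consistent iff it is $n$-consistent. Specializing all $c_i$ to a single $a$ and letting the $b_i$ range over $\mathbb{M}$, the family $\{\varphi(x,m,a):m\in\mathbb{M}\}$ is consistent iff it is $n$-consistent iff $\mathbb{M}\models\forall y_1\dots y_n\,\exists x\,\bigwedge_{i\le n}\varphi(x,y_i,a)$. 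Hence for $a\in M$ the set in question is defined in $M$ by the $\mathcal{L}$-formula $\rho(z):=\forall y_1\dots y_n\,\exists x\,\bigwedge_{i\le n}\varphi(x,y_i,z)$. This is just the fact, alluded to before Proposition \ref{mc}, that consistency of a $\varphi$-type is definable in an nfcp theory.

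For $(\Leftarrow)$ I would argue the contrapositive, using the Fact. Suppose $T$ is not nfcp. If $T$ is unstable we are done, so assume $T$ is stable; then by the Fact $T^{\mathrm{eq}}$ does not eliminate $\exists^{\infty}$, and the goal becomes to produce a single formula witnessing the failure of the definability condition. First I would repackage the failure of $\exists^{\infty}$-elimination in real terms: presenting the relevant imaginary sorts as quotients of real tuples by $\emptyset$-definable equivalence relations (so that $\mathcal{L}^{\mathrm{eq}}$-definable subsets of real sorts become $\mathcal{L}$-definable), one obtains a $\emptyset$-definable equivalence relation $E(\bar u,\bar u')$ on $k$-tuples and an $\mathcal{L}$-formula $\chi(\bar u,\bar v)$ such that the number of $E$-classes meeting $\chi(\mathbb{M},\bar v)$ is finite for every $\bar v$ but takes arbitrarily large values as $\bar v$ varies. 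Put $\chi^{*}(\bar u,\bar v):=\exists\bar u''\,(E(\bar u,\bar u'')\wedge\chi(\bar u'',\bar v))$, the $\mathcal{L}$-formula saying that the $E$-class of $\bar u$ meets $\chi(\cdot,\bar v)$, and set
\[
\varphi(\bar u;\bar u';\bar v)\;:=\;\chi^{*}(\bar u,\bar v)\wedge\bigl(\neg\chi(\bar u',\bar v)\vee\neg E(\bar u,\bar u')\bigr).
\]

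The claim is that for every $\bar v_0$, the partial type $\{\varphi(\bar u,\bar u'_m,\bar v_0):\bar u'_m\in\mathbb{M}\}$ in the variable $\bar u$ is consistent if and only if $\chi(\cdot,\bar v_0)$ meets infinitely many $E$-classes. Indeed this type says that $\bar u$ lies in an $E$-class meeting $\chi(\cdot,\bar v_0)$ but $E$-inequivalent to every named element of $\chi(\cdot,\bar v_0)$; for finitely many $\bar u'_1,\dots,\bar u'_k\in\mathbb{M}$ the conjunction $\bigwedge_{i\le k}\varphi(\bar u,\bar u'_i,\bar v_0)$ is satisfiable exactly when $\chi(\cdot,\bar v_0)$ meets some $E$-class distinct from those of the (at most $k$) relevant $\bar u'_i$, so $k$-consistency of the type amounts to $\chi(\cdot,\bar v_0)$ meeting at least $k+1$ $E$-classes, and consistency to its meeting infinitely many. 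The model to keep in mind here is the classical fcp example, the theory of an equivalence relation with one class of each finite size. Consequently the set of $\bar v$ for which this type is consistent equals $\bigcap_{N}\{\bar v:\chi(\cdot,\bar v)\text{ meets }\geq N\ E\text{-classes}\}$, a strictly decreasing intersection of definable sets (by the unboundedness of the finite values attained), which by a routine compactness argument is not definable. Thus $\varphi$ witnesses the failure of the definability condition, and the contrapositive is complete.

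Two steps are routine and I would not dwell on them: extracting the uniform $n$ from nfcp in $(\Rightarrow)$, and the compactness argument that a non-stabilizing decreasing $\omega$-chain of definable sets has non-definable intersection. The part I expect to demand real care is the backward direction's reduction of ``$T^{\mathrm{eq}}$ does not eliminate $\exists^{\infty}$'' to the clean real-sorts data $(E,\chi)$ with finite-but-unbounded numbers of $E$-classes met, and then the verification that $\varphi$ above behaves exactly as asserted --- in particular that the conjunct $\chi^{*}(\bar u,\bar v)$ (as opposed to $\chi(\bar u,\bar v)$) is precisely what stops the type from degenerating into ``$\bar u\neq\bar u$'', and that the count ``$k$-consistent $\iff$ at least $k+1$ classes'' is right even in the edge case where $\chi(\cdot,\bar v)$ meets fewer than $k$ classes.
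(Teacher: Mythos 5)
Your proof is correct and is essentially the paper's argument: the forward direction is the same one-line use of nfcp to bound consistency by $n$-consistency, and the backward direction rests on exactly the same encoding as the paper, namely that consistency of $\{\chi(\bar u,\bar v_0)\wedge(\chi(\bar u',\bar v_0)\to\neg E(\bar u,\bar u')):\bar u'\in\mathbb{M}\}$ (your $\chi^{*}$ versus $\chi$ in the first conjunct is an inessential variant, and with the natural pullback from $T^{\mathrm{eq}}$ the set $\chi(\mathbb{M},\bar v)$ is $E$-saturated anyway, so the two coincide) is equivalent to $\chi(\cdot,\bar v_0)$ meeting infinitely many $E$-classes. The only real difference is packaging: the paper argues the $(\Leftarrow)$ direction directly --- from the definability hypothesis it concludes at once that ``infinitely many $E$-inequivalent realizations of $\psi(x,b)$'' is definable, hence $T^{\mathrm{eq}}$ eliminates $\exists^{\infty}$ and Shelah's Fact applies --- whereas your contrapositive forces you to carry the extra (routine but avoidable) steps of unpacking the failure of $\exists^{\infty}$-elimination into real sorts and running a compactness argument for non-definability. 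One imprecision to fix in that unpacking: you cannot arrange that the number of $E$-classes meeting $\chi(\cdot,\bar v)$ is \emph{finite for every} $\bar v$ while unbounded --- in the monster, unbounded finite values force some $\bar v$ with infinitely many classes by compactness, and if the count really were finite for all $\bar v$ your intersection would be empty and hence trivially definable. What failure of $\exists^{\infty}$-elimination actually gives, and what your chain argument actually uses, is only that for each $N$ there is some $\bar v$ with finitely many but at least $N$ classes; stated that way, your argument goes through.
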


\begin{proof}
    ($\Rightarrow$) The property $\mathrm{nfcp}$ implies stability by the previous fact. It is immediate from $\mathrm{nfcp}$ that there is some $k$ so that for $a \in \mathbb{M}$, $\{\varphi(x, m, a): m \in \mathbb{M} \}$ is consistent if and only if it is is $k$-consistent, so consistency of this set is in fact definable.

    ($\Leftarrow$) By the previous fact, it suffices to show that $T^{eq}$ eliminates $\exists^{\infty}$.  Let $E$ be a definable equivalence relation on tuples $x$ and $\psi(x, a)$ a formula in $T$; it suffices to show that having infinitely many $E$-inequivalent realizations $x$ of $\psi(x, b)$ is a definable property of $b$. For $\varphi(x, y, z) =: \psi(x, z) \wedge (\psi(y, z) \rightarrow \neg(x E y))$, and $b \in \mathbb{M}$, consistency of $\{\varphi(x, m, b): m \in \mathbb{M}\}$ is the same thing as saying that, for any finite collection of realizations of $\psi(x, b)$, there is some realization that is $E$-inequivalent to any realization in this collection. This is of course the same thing as $\psi(x, b)$ having infinitely many $E$-inequivalent realizations. But by the assumption, consistency of $\{\varphi(x, m, b): m \in \mathbb{M}\}$ is a definable property of $b$. 
\end{proof}

The following generalizes the arguments from (iv) $\rightarrow$ (i) of Theorem 6 of \cite{Poi01}, as well as Proposition 2.11 of \cite{CP98}. 

\begin{lemma}
Suppose that in $T$ it fails that for any formula $\varphi(x, y, z)$, the set of $a$ so that $\{\varphi(x, m, a): m \in \mathbb{M} \}$ is consistent, is definable. Then there is a theory $T'$ definable in $T$, and $R$ definable in $T'$, so that $T'^{R}$ does not exist.
\end{lemma}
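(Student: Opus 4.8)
The strategy is to reverse-engineer a "bad" theory $T'$ from the witness of the failure of definability in $T$. By hypothesis there is a formula $\varphi(x,y,z)$, a model $M \models T$, and a failure of definability: the set $D = \{a \in M : \{\varphi(x,m,a) : m \in M\} \text{ is consistent}\}$ is not definable. Equivalently, by the previous lemma (in its contrapositive form), either $T$ is unstable, or the failure comes from the non-elimination of $\exists^\infty$ in $T^{\mathrm{eq}}$. In either case, compactness gives us an infinite chain phenomenon: there exist $(a_i)_{i \in \omega}$ and for each $i$ an $n_i$ so that $\{\varphi(x,m,a_i) : m \in M\}$ is $n_i$-consistent but not $n_i+1$-consistent (or some such $k$-consistency ladder), producing, via the failure of uniform bounds, arbitrarily long "almost-consistent" configurations. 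The plan is to extract from this a definable family that codes arbitrarily long finite linear orders — this is exactly the mechanism by which nfcp fails, and it is what makes the model companion construction break.

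Concretely, I would first pass to $T' = T^{\mathrm{eq}}$ (or a finite reduct thereof), which is interpretable in $T$, so that $T'$ is again a theory with quantifier elimination after Morleyization; it suffices to produce the bad $R$ there. Inside $T'$, the failure of definability lets us find, by compactness, a definable relation $S(x,y)$ (built from $\varphi$) and, in a saturated model, elements witnessing that the "consistency locus" is not definable; the standard argument (as in \cite{Ki00} and \cite{Poi01}) converts this into arbitrarily long finite $S$-chains $s_0, s_1, \ldots, s_{m}$ with prescribed incidence pattern, with no uniform bound $m$. I would then set $R$ to be (the negation of) the relevant incidence relation, so that the universal axiom defining $T'_R$ forces the new predicate $P$ to pick out, on any tuple of distinct elements, a pattern consistent with being a linear (or partial) order — the key point being that an existentially closed model of $T'_R$ would then have to contain a partial automorphism of an infinite linear order that cannot be extended, contradicting the chain condition needed to build model-companion extensions. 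Following the argument of \cite{Ki00}, one shows that the class of existentially closed models of $T'_R$ is not elementary: one produces two models elementarily equivalent in $\mathcal{L}^P$, one existentially closed and one not, by using the unbounded chains on one side and a genuinely infinite order on the other.

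The main obstacle — and the part requiring the most care — is the bookkeeping that turns "the consistency locus is not definable" into a concrete definable incidence relation $R$ whose associated $T'_R$ is genuinely pathological, rather than merely complicated. One has to be careful that the universal axioms added to form $T'_R$ really do force the order-like behavior: the predicate $P$ must be constrained, on $n$-tuples of distinct elements, precisely so that realizing an existential formula asserting "the $P$-set extends by one more element in this pattern" is possible locally at every finite stage but globally blocked, mimicking the $\mathrm{SOP}$-style obstruction used for automorphisms in \cite{Ki00}. I expect that, as in the cited references, $n = 3$ suffices (a ternary relation coding a partial order together with a would-be extension point), and that the non-existence of $T'^R$ then follows by exhibiting, for each $k$, a model of $T'_R$ in which a certain existential sentence is $k$-consistent to realize but whose realization is obstructed in the limit — so the class of e.c. models fails the compactness needed to be axiomatizable. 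The delicate step is verifying that this obstruction is not itself definable away, which is exactly where the hypothesis (failure of the definability of the consistency locus) is used in an essential way.
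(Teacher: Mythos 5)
Your proposal conflates this lemma with the one that follows it in the paper (the unstable case). The hypothesis here — failure of definability of the consistency locus of $\{\varphi(x,m,a): m \in \mathbb{M}\}$ — also covers stable theories with the finite cover property (for instance, an equivalence relation with one class of each finite size), and in that situation there is no definable linear order at all: stability rules out the order property, so your step ``extract from this a definable family that codes arbitrarily long finite linear orders'' fails, and with it the entire Kikyo-style plan, since a partial order-automorphism obstruction needs an order to act on. The order-automorphism argument is exactly how the paper handles the separate lemma in which $T$ is assumed unstable; it is not available under the present hypothesis, and ``fcp implies coding of long finite orders'' is simply false.

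Moreover, your sketch never actually produces $T'$ and $R$, which is the whole content of the lemma; the phrase ``set $R$ to be (the negation of) the relevant incidence relation'' leaves the key construction unspecified. The paper's proof is direct and avoids orders entirely: take $T'$ to be $T$ with two extra sorts, $S_{1}$ for pairs $(m_{1},m_{3})$ and $S_{2}$ for singletons $m_{2}$ (with the structure induced from $T^{\mathrm{eq}}$), and let $R(x,y)$ say that if $x=(m_{1},m_{3})\in S_{1}$ and $y=m_{2}\in S_{2}$ then $\varphi(m_{1},m_{2},m_{3})$. The universal axiom of $T'_{R}$ then forces: if a pair $(a,b)$ and various $S_{2}$-points all lie in $P$, then $a$ realizes $\varphi(x,m,b)$ for each such point $m$. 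Assuming $T'^{R}$ exists, one chooses for each $n$ a parameter $b_{n}$ whose family is $n$-consistent but not $(n+1)$-consistent (available precisely because consistency is not definable), places the parameters of an $(n+1)$-inconsistent subfamily into $P$, passes to an existentially closed extension (in which no pair $(x,b_{n})$ can lie in $P$, since such a pair would give a common realization), and then applies compactness to obtain a model of $T'^{R}$ and a parameter $b$ whose $P$-trace family can be realized by a new point in an elementary extension, yet with no pair $(x,b)$ in $P$ — contradicting existential closedness. None of the finite-chain/order machinery you describe is needed, and in the stable fcp case it cannot be made to work.
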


\begin{proof}
    (Predicate version) Our strategy will be to encode a definable family of families of sets, containing families that are $n$-consistent but not $n+1$ consistent for each $n$, as a unary predicate. Let $T'$ be the theory of models $M \models T$ together with an additional sort $S_{1}$ for pairs $(m_{1}, m_{3}) \in M$ and another sort $S_{2}$ for singletons $m_{2} \in M$, together with the same definable relations as in $T^{\mathrm{eq}}$. (So $S_{1}$ can be identified with the set of pairs of elements of the home sort.) Let $\varphi(x, y, z)$ witness the failure of the above property, and let $R(x, y)$ express that if $x = (m_{1}, m_{3}) \in S_{1}$ and $y = m_{2} \in S_{2}$ that $M \models \varphi(m_{1}, m_{2}, m_{3}) $. We show that $T'^{R}$ is not well-defined.

    Suppose it is well-defined. Let $M'_{n} \models T'$ and $b_{n} \in M'_{n}$ (the home sort) so that $\{\varphi(x, m, b_{n}): m \in M'_{n} \}$ is $n$-consistent but not $n+1$-consistent. Expand $M'_{n}$ to a model of $T_{R}$ so that $\{\varphi(x, m, b_{n}): m \in P(M'_{n}) \}$ is $n+1$-inconsistent. Find an extension $M_{n} \supseteq M'_{n}$ with $M_{n} \models T^{R}$; then for $P_{n}$ the $S_{2}$-points of $P(M_{n})$, $\{\varphi(x, m, b_{n}): m \in P_{n} \}$ is $n$-consistent but not $n+1$-consistent. In fact, any $\lfloor \frac{n}{2} \rfloor$ formulas of $\{\varphi(x, m, b_{n}): m \in P_{n} \}$ must have at least $\frac{n}{2}$ realizations, as the other formulas can be used to distinguish the realizations\footnote{A similar observation was made in the proof of Theorem 7.3 of \cite{Che14}, also as a strategy of getting infinitely many realizations for a set of formulas.}. By the $n+1$-inconsistency, there can be no point of the form $(x, b_{n}) \in S_{1}$ in $P(M_{n})$. So by compactness, we can find a model $M \models T^{R}$ and $b \in M$ so that $\{\varphi(x, m, b): m \in M \}$ has infinitely many realizations, but there is no point of the form $(x, b) \in S_{1}$ in $P(M)$. This is a contradiction, since $M$ is existentially closed; we can in fact find a point $a \notin M$ realizing $\{\varphi(x, m, b): m \in M \}$ in an $\mathcal{L}$-elementary extension and label $(a, b)$ with $P$ to get a model of $T_{R}$.

    (Equivalence relation version) This is essentially the same proof; just instead of considering the domain of $P$, get $b_{n}, M_{n}$ and a particular equivalence class in place of $P(M_{n})$ with the desired properties, then apply compactness so that there is $b, M$ and a new equivalence class in place of $P(M)$ that gives us a contradiction.

\end{proof}

 To characterize nfcp \textit{within general theories} in terms of generic structures, we show the following:

\begin{lemma}
Suppose that $T$ is unstable.
Then there is a theory $T'$ definable in $T$, and $R$ definable in $T'$, so that $T'^{R}$ does not exist.
\end{lemma}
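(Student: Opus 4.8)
The plan is to mirror the two preceding lemmas: working inside a definitional expansion $T'$ of $T^{\mathrm{eq}}$, I build a definable family of partial configurations into which the generic predicate $P$, constrained by the universal $R$-axiom, is forced by existential closedness both to \emph{contain} and to \emph{avoid} certain points, contradicting the finite cover property that instability supplies. Concretely, the ``fcp family'' here will be encoded as a \emph{generic partial order-isomorphism} of the order pattern witnessing the instability of $T$ --- the ``partial automorphism of a linear order'' mentioned in the introduction. The first ingredient is order-theoretic and routine: since $T$ is unstable it has the order property, so (passing to $T^{\mathrm{eq}}$ and, by Ramsey, to an indiscernible sequence) there is a definable relation $\le$ on a definable set $D$ which, uniformly in a parameter, linearly preorders arbitrarily long finite subsets of pairwise $\le$-inequivalent elements; equivalently, for each $n$ there are $d_1,\dots,d_n$ in $D$ with $d_i \le d_j \iff i \le j$ and $d_i\neq d_j$ for $i\neq j$. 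Let $T'$ be $T^{\mathrm{eq}}$ together with a sort $S$ for ordered pairs from $D$; this is a definitional expansion, hence definable in $T$.

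Next I would take $R$ to be the $\mathcal{L}'$-formula on $S$ given by
\[
R\bigl((p_1,p_2),(q_1,q_2)\bigr)\ :\equiv\ \bigl(p_1 = q_1 \leftrightarrow p_2 = q_2\bigr)\,\wedge\,\bigl(p_1 \le q_1 \leftrightarrow p_2 \le q_2\bigr)\,\wedge\,\bigl(q_1 \le p_1 \leftrightarrow q_2 \le p_2\bigr),
\]
expressing that two pairs are jointly consistent with lying on the graph of a partial $\le$-embedding. A model of the associated theory $T_{R}$ is then a model of $T'$ carrying a set $P \subseteq S$ any two distinct members of which satisfy $R$, i.e.\ (the graph of) a partial $\le$-embedding $f \colon D \rightharpoonup D$; if $T'^{R}$ exists it is the theory of models of $T'$ equipped with a generic such $f$.

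Assuming $T'^{R}$ exists, call it $T^{*}$, and take $M \models T^{*}$ sufficiently saturated with $P(M)$ the graph of $f$, I would run the argument of \cite{Ki00}. Existential closedness makes $f$ maximally extendable: for each $n$, along a $\le$-chain $c_0 \le \dots \le c_n$ in $D$ (available by the first paragraph) one may adjoin the pairs $(c_0,c_1),\dots,(c_{n-1},c_n)$ to $P$ in an $\mathcal{L}'^{P}$-extension of $M$ without violating the $R$-axiom, so the existential statement ``some $x_0$ begins a $\le$-increasing $f$-chain of length $n$'' holds in $M$; as these statements (in the single variable $x_0$) are decreasing in $n$, saturation yields one $a \in M$ beginning such a chain of every finite length, hence --- by the functionality built into $R$ and the transitivity of $\le$ --- an infinite one, and symmetrically a backward one, producing a bi-infinite $\le$-chain $\langle f^{i}(a) : i \in \mathbb{Z}\rangle$ that is a single $f$-orbit. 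One then concludes as in \cite{Ki00}: the positions over $M$ of elements lying $\le$-between consecutive members of this orbit cannot be reconciled with the further generic extensions of $f$ that existential closedness forces, since reconciling them would require deciding, uniformly in a parameter, a consistency question which the order property makes non-definable --- contradicting the finite cover property exhibited by the order formula.

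The work is concentrated in this last step: isolating from the order property exactly the definable datum for which the genericity imposed on $f$ by existential closedness is incompatible with the universal $R$-constraint --- that is, transporting the Kikyo--Shelah amalgamation obstruction for generic automorphisms of orders into this partial, predicate-constrained setting, and verifying that it goes through using only arbitrarily long \emph{finite} chains (since $T$, being merely unstable, need not interpret an infinite linear order). The order-theoretic reduction, the verification that $R$ encodes partial $\le$-embeddings, and the extendability facts are all routine. The equivalence-relation version is obtained, as in the previous lemmas, by replacing the domain of $P$ by a suitably chosen equivalence class.
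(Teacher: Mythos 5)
Your overall strategy is the paper's: encode a generic partial order-isomorphism attached to an order-property formula and run the Kikyo-style non-amalgamation argument. But the proposal has a genuine gap, located exactly where you say ``the work is concentrated,'' and your specific encoding cannot close it. Your $R$ is a binary constraint on pairs of graph-points, saying only that any two elements of $P$ are pairwise compatible with being on the graph of a map preserving and reflecting $\le$ and $=$. This never forces $\le$ to behave like a linear order (no transitivity, antisymmetry, or trichotomy) on the elements that actually occur in $P$. Both halves of the Kikyo argument then break. For the step where existential closedness is supposed to produce a fixed point in the cut determined by $c_0\le c_1\le\cdots$ below $c_\omega$: to adjoin $(x,x)$ to $P$ you must position $x$ under $\le$ consistently with respect to \emph{every} pair already in $P(M)$, and with $\le$ an arbitrary unstable relation there is no ``cut'' to fill and no reason such an $x$ exists in an elementary extension. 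For the final contradiction: in a model with $f(c_i)=c_{i+1}$ ($i<n$) and $f(c_n)=c_\omega$, your constraints applied to a putative fixed point $(x,x)$ with $c_0\le x\le c_\omega$ only yield $x\le c_i$ and $c_i\le x$ for all $i$ and $x\neq c_i$ --- which is not contradictory for a general unstable relation $\le$ (nothing rules out an element $\le$-related both ways to everything), so the finite configuration does not exclude the fixed point and no contradiction with the derived existential statement follows.

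The paper's fix is precisely the ingredient your encoding omits: it builds the order structure into the universal constraint itself. The theory $T_{*}$ requires that the \emph{domain of $P$ be linearly ordered by $\varphi$} and that $R$ restricted to $P\times P$ be the graph of a partial order-automorphism of $P$, and then passes to a suitable power of $T$ so that this whole package (linearly ordered predicate plus graph of a partial automorphism) is carried by a single unary predicate $P'$ subject to a single universal axiom of the prescribed form $\forall \bar{x}\,(\delta(\bar{x})\wedge\bigwedge_i P'(x_i))\rightarrow R'(\bar{x})$. With linearity and injectivity available as universal consequences, the cut determined by $\{c_i\}$ partitions $P(M)$, is closed under $\sigma$, can be filled in an elementary extension (finite satisfiability uses linearity on $P$), and the configuration $\sigma(c_n)=c_\omega$ provably excludes a fixed point strictly between $c_0$ and $c_\omega$. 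Note also that your closing appeal to ``contradicting the finite cover property'' misidentifies the mechanism: the obstruction in this lemma is the Kikyo--Shelah generic-automorphism amalgamation failure, not an fcp-definability argument (that is the content of the preceding lemma). So the proposal as written is not a proof; to repair it you would have to strengthen the universal constraint (e.g., by working in a power of $T$ as the paper does) so that linearity of the order on the constrained set is itself enforced.
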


One obstruction to the existence of a model companion is automorphisms of an ordered set. This strategy was used in \cite{Ki00} to show that the theory of a (necessarily $\mathrm{SOP}$) unstable $\mathrm{NIP}$ structure with an automorphism did not have a model companion; then the result was improved to the general $\mathrm{SOP}$ case in \cite{KikS02}. We follow the arguments from these papers.

\begin{proof}

(Predicate version) Let $\varphi(x, y)$ be unstable, and suppose without loss of generality that there is an indiscernible sequence of singletons $\{a_{i}\}_{i \in \omega}$ so that $\models \varphi(a_{i}, a_{j})$ if and only if $i \leq j$. Consider the theory $T_{*}$ of models $M \models T$ expanded by a unary predicate $P$ whose domain is linearly ordered by $\varphi(x, y)$ together with a binary relation $R(M) \subseteq M^{2}$ so that the restriction of $R$ to $P(M) \times P(M)$ is the graph of a partial order-automrphism of $P(M)$ with respect to $\varphi(x, y)$. In a suitable power $T'$ of $T$, the requirements on this structure can all be encoded by a universal axiom of the form  $\forall \bar{x} (\delta(\bar{x}) \wedge \bigwedge_{i=1}^{n} P'(x_{i})) \rightarrow R'(\bar{x}) $ for $R'$ some $\mathcal{L}$-definable relation and $P'$ a unary predicate (in the power) representing the additional structure. Therefore, $T'_{R}$ will be interdefinable with $T_{*}$ in a natural way, so it suffices to show that $T^{*}$ does not have a model companion. 

Suppose this model companion $T^{*}$ exists. We argue as in the proof of Theorem 3.1 of \cite{Ki00}. Let $\sigma$ be the partial order-automorphism defined on $P^{2}$ by $R$. Let $\mathcal{L}^{*}_{c}$ be the language of $T^{*}$ together with additional constant symbols $\{c_{i}\}_{i \in \omega+1}$. Let $T^{*}_{c}$ be the $\mathcal{L}^{*}_{c}$-theory formed from $T^{*}$ by additionally requiring that

(a) $\{c_{i}\}_{i \in \omega+1}$ be an $\mathcal{L}$-indiscernible $\varphi(x, y)$-increasing sequence within $P$, and

(b) $\sigma(c_{i})$ is (defined and) equal to $c_{i+1}$ for $i \in \omega$.

First, we see that $T^{*}_{c} \vdash \exists x (P(x) \wedge \varphi(c_{0}, x ) \wedge \varphi(x, c_{\omega}) \wedge \sigma(x) = x) $.

To see this, let $M$ be a model of $T^{*}_{c}$. In an $\mathcal{L}$-elementary extension $M'$, find by indiscernibility some $c$ greater, in the sense of $\varphi(x, y)$, than all of the $c_{i}$ for $i \in \omega$, but less than any element of $P(M)$ greater than all of the $c_{i}$ for $i \in \omega$. In other words, fill the right cut of $P(M)$ determined by the increasing sequence $\{c_{i}\}_{i \in \omega}$. Extend the additional structure only to declare that $c \in P(M')$ and $\sigma(c)$ is defined and equal to $c$; then since the cut determined by the increasing sequence $\{c_{i}\}_{i \in \omega}$ must be closed under $\sigma$ where it is defined on $P(M)$, $\sigma$ remains a partial order-automorphism on $P(M')$. So $M \models \exists x (P(x) \wedge \varphi(c_{0}, x ) \wedge \varphi(x, c_{\omega}) \wedge \sigma(x) = x)$ by existential closedness.

Let $(T^{*}_{c})_{n}$ be the theory $T^{*}$ together with

(a)$_n$ $\{c_{i}\}_{i \in [n] \cup \{\omega\}}$ is an $\mathcal{L}$-indiscernible $\varphi(x, y)$-increasing sequence within $P$, and 

(b)$_n$ $\sigma(c_{i})$ is defined and equal to $c_{i+1}$ for $i < n$.

By compactness, $(T^{*}_{c})_{n} \vdash \exists x (P(x) \wedge \varphi(c_{0}, x ) \wedge \varphi(x, c_{\omega}) \wedge \sigma(x) = x)$ for some $n$. For a contradiction, it remains to construct a model of $(T^{*}_{c})_{n}$ where $\exists x (P(x) \wedge \varphi(c_{0}, x ) \wedge \varphi(x, c_{\omega}) \wedge \sigma(x) = x)$ is not satisfied. But it is easy to construct a model of $(T^{*}_{c})_{n}$ where, in addition to these requirements, $\sigma(c_{n})$ is defined and equal to $c_{\omega}$. And this cannot satisfy  $\exists x (P(x) \wedge \varphi(c_{0}, x ) \wedge \varphi(x, c_{\omega}) \wedge \sigma(x) = x)$.

(Equivalence relation version) Similar to the predicate version; in the above $T'_{R}$, the information of $R, P$ is now encoded as a particular $E$-equivalence class, which we can distinguish by selecting a representative.

\end{proof}

Combining Proposition 2.1 with lemmas 2.3 through 2.5, we characterize existence of these model companions as a classification-theoretic dividing line:

\begin{theorem}
A theory $T$ is $\mathrm{nfcp}$ if and only if for every theory $T'$ definable (interpretable) in $T$ and relation $R$ definable in $T'$, $(T')^{R}$ exists.  Otherwise it is $\mathrm{fcp}$.
\end{theorem}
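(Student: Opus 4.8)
The plan is to assemble the theorem directly from the pieces already in hand, so the only real work is checking that the quantifiers over $T'$ and $R$ match up. First I would handle the forward direction: assume $T$ is $\mathrm{nfcp}$. Since $\mathrm{nfcp}$ is preserved under interpretation (this is noted right after Proposition \ref{mc}), any $T'$ interpretable in $T$ is again $\mathrm{nfcp}$, and then Proposition \ref{mc} (together with its equivalence-relation version, proved in the same statement) gives that $(T')^{R}$ exists for every relation $R$ definable in $T'$. That settles one implication with no further effort.

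For the converse I would argue by contraposition: suppose $T$ is not $\mathrm{nfcp}$, i.e.\ $T$ is $\mathrm{fcp}$. By the Shelah fact quoted in the excerpt, $T$ is then either unstable, or stable but $T^{\mathrm{eq}}$ fails to eliminate $\exists^{\infty}$. In the first case, Lemma 2.5 directly produces a $T'$ definable (hence interpretable) in $T$ and an $R$ definable in $T'$ with $(T')^{R}$ nonexistent. In the second case, by Lemma 2.3 (applied in its contrapositive form: stability plus definability of the relevant consistency sets is equivalent to $\mathrm{nfcp}$) the hypothesis of Lemma 2.4 is met — there is a formula $\varphi(x,y,z)$ for which the set of $a$ with $\{\varphi(x,m,a):m\in\mathbb{M}\}$ consistent is not definable — and Lemma 2.4 then yields the required $T'$ and $R$. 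In either case we have exhibited an interpretable $T'$ and a definable $R$ with $(T')^{R}$ not well-defined, which is the negation of the right-hand side. Hence if the right-hand side holds, $T$ must be $\mathrm{nfcp}$.

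The one point requiring a sentence of care is the interplay of ``definable'' versus ``interpretable'' in the statement: Lemmas 2.4 and 2.5 construct $T'$ as a theory \emph{definable} in $T$ (adding named sorts for tuples, as in $T^{\mathrm{eq}}$), and definability implies interpretability, so the counterexample is valid for the interpretable quantifier as well; conversely, in the forward direction we only used that $\mathrm{nfcp}$ passes to interpretations, which is the weaker and thus sufficient closure property. So no gap arises. I expect the main (indeed only) obstacle to be purely bookkeeping: making sure that the ``for every $T'$ \ldots and every $R$ \ldots'' in the statement is correctly negated as ``there exist $T'$ and $R$,'' which matches exactly the output of Lemmas 2.4 and 2.5, and that the stable/unstable case split from Shelah's fact is exhaustive — both of which are immediate. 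I would close by remarking that the final sentence of the theorem is just the definition of $\mathrm{fcp}$ as the negation of $\mathrm{nfcp}$, so nothing further is needed.
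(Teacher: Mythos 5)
Your proposal is correct and matches the paper's argument, which likewise just combines Proposition \ref{mc} (plus preservation of nfcp under interpretation) for the forward direction with Lemmas 2.3--2.5 for the contrapositive of the converse. The case split you make (unstable via Lemma 2.5; stable-but-fcp forcing the failure of the definability condition via Lemma 2.3, then Lemma 2.4) is exactly the intended assembly.
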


 Poizat, in his analysis of belles paires in \cite{Poi01}, treats only the stable case. Not only does our result generalize those of Poizat to $T^{R}$; it gives a full characterization of $\mathrm{nfcp}$ in terms of model companions, ruling out the unstable case.

We now return to the set-up for our main result, \ref{main}. We are interested in using the complexity of $T^{R}$ to classify when $\neg R$ is geometrically equivalent to the graph of a group operation in the sense described in the statement of \ref{main}. (The negation is required for this construction to be nontrivial.) The following class of ternary relations, first defined in \cite{CS21} for general $n$-ary relations, includes all relations  without no trivial reason \textit{not} to arise from a group in this sense.

\begin{definition}(\cite{CS21})
A definable ternary relation  $R$ is \emph{fiber-algebraic} if whenever $\models R(\bar{a})$, any coordinate of $\bar{a}$ is algebraic over the other two. 
\end{definition}

When $\neg R$ is a fiber-algebraic definable ternary relation, $T^{R}$ is well-defined even when $T$ eliminates $\exists^{\infty}$, and is not necessarily $\mathrm{nfcp}$. Since the weakly minimal theories considered in our main result are $\mathrm{nfcp}$ (\cite{Ga05}, as observed by \cite{CL20}), we will show this in the appendix.

It is essential to our main result that $T^{R}$ admit quantifier elimimination up to finite covers. When $\neg R$ is a fiber-algebraic ternary relation, this is easy.

\begin{lemma}\label{qe1}
Let the $\mathcal{L}$-definable ternary relation $R$ be such that $\neg R$ fiber-algebraic, and let $C \subseteq A, B$ be substructures of models of $T_{R}$, algebraically closed in the sense of $\mathcal{L}$. Then there is a model $D$ of $T_{R}$ containing $A$ and $B$ as substructures, with $A \cap B = C$.
\end{lemma}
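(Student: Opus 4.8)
The plan is to amalgamate $A$ and $B$ freely over $C$ in the language $\mathcal{L}$ first, and then to check that the resulting $\mathcal{L}$-structure carries an expansion satisfying the universal axioms defining $T_R$. For the first step, since $T$ has quantifier elimination and $C$ is algebraically closed in $\mathcal{L}$, the nonforking amalgamation of the stable theory $T$ produces a model $D_0 \models T$ with $\mathcal{L}$-embeddings of $A$ and $B$ over $C$ and $A \cap B = C$ inside $D_0$ (take a $|T|^+$-saturated $D_0$ realizing a nonforking extension over $B$ of $\operatorname{tp}_{\mathcal{L}}(A/C)$, using that algebraically closed sets are the canonical bases of types and that $A \mathop{\smile\hskip-0.9em\vert\hskip0.3em}_C B$ forces $\operatorname{acl}_{\mathcal{L}}(A) \cap \operatorname{acl}_{\mathcal{L}}(B) = \operatorname{acl}_{\mathcal{L}}(C) = C$). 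Next I would pass to the relevant expansion.

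For the equivalence relation version, I would define $E$ on $D_0$ by taking the $E$-classes of $A$, the $E$-classes of $B$, and singletons for everything else; concretely, $E^{D_0}$ is the transitive closure of $E^A \cup E^B$, which since $A \cap B = C$ and $E^A, E^B$ restrict to the same relation on $C$, does not merge any $A$-class with any $B$-class except through common $C$-elements — so each $E^{D_0}$-class is either contained in $A$ or contained in $B$. Hence any tuple of distinct elements lying in a single $E^{D_0}$-class lies entirely in $A$ or entirely in $B$, and since $A, B$ are substructures of models of $T_R$, the axiom $\forall \bar x\,(\delta(\bar x) \wedge \bigwedge_{i \neq j} E(x_i,x_j) \to R(\bar x))$ is inherited. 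Thus $(D_0, E^{D_0}) \models T_R$, and we take $D$ to be this structure (or a model of $T_R$ extending it, if one wants $D \models T_R$ literally; the statement only asks for a model of $T_R$ containing $A,B$). For the predicate version, $P^{D_0} := P^A \cup P^B$ works the same way, again using $A \cap B = C$ so that no new $n$-tuple of distinct elements of $P$ appears beyond those already in $A$ or in $B$.

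The point where fiber-algebraicity of $\neg R$ actually enters — and the step I expect to be the real content — is the first one: ensuring the $\mathcal{L}$-amalgam can be chosen with $A \cap B = C$ exactly, rather than just $A \cap B \supseteq C$. In a general stable theory the nonforking amalgam gives $\operatorname{acl}(A) \cap \operatorname{acl}(B) = \operatorname{acl}(C)$, which is what we need since $C$ is already $\mathcal{L}$-algebraically closed; so in fact the hypothesis that $C$ be $\mathcal{L}$-algebraically closed is doing the work here, and fiber-algebraicity of $\neg R$ is what guarantees (elsewhere in the paper) that the relevant substructures of models of $T_R$ are the $\mathcal{L}$-algebraically closed sets, so that this lemma is the correct amalgamation statement to feed into the quantifier-elimination-up-to-finite-covers argument. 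The main obstacle to watch is therefore bookkeeping: verifying that $E^{D_0}$ (resp. $P^{D_0}$) really introduces no violating tuple, i.e. that a tuple of distinct elements with all pairs $E$-related (resp. all coordinates in $P$) cannot straddle $A \setminus C$ and $B \setminus C$ — which is immediate from $A \cap B = C$ together with the fact that two elements, one in $A \setminus C$ and one in $B \setminus C$, are $E^{D_0}$-inequivalent by construction.
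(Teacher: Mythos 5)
There is a genuine gap, and it sits exactly where the content of the lemma lies. Your verification step rests on the claim that every tuple of distinct elements lying in one $E$-class (resp.\ with all coordinates in $P$) lies entirely inside $A$ or entirely inside $B$. This is false for the structure you define. In the equivalence relation version, the transitive closure of $E^A\cup E^B$ does merge classes across $A\setminus C$ and $B\setminus C$ whenever an element of $A\setminus C$ and an element of $B\setminus C$ are each $E$-related to a common element of $C$; ``merging only through common $C$-elements'' is precisely what produces classes contained in neither $A$ nor $B$, so your final sentence (``two elements, one in $A\setminus C$ and one in $B\setminus C$, are $E^{D_0}$-inequivalent by construction'') does not hold. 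In the predicate version the failure is even more immediate: $P^{D_0}=P^A\cup P^B$ contains triples with one coordinate in $P(B)\setminus C$ and two in $P(A)$ as soon as both sides have new $P$-points, and these are new triples not lying in $A$ or in $B$.

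These straddling triples are exactly where the hypotheses you set aside are needed, and this is how the paper argues: for a triple $\bar m$ of distinct elements with one coordinate in $B\setminus C$ and the other two in $A$ (all in $P$, or all in one $E$-class), if $\neg R(\bar m)$ held, then fiber-algebraicity of $\neg R$ would make the lone coordinate $\mathcal{L}$-algebraic over the other two, hence in $\mathrm{acl}_{\mathcal{L}}(A)=A$, so in $A\cap B=C$, a contradiction; therefore $R(\bar m)$ holds and the universal axiom of $T_R$ is preserved. So fiber-algebraicity and the $\mathcal{L}$-algebraic closedness of $A$ and $B$ enter directly into this lemma's proof, not only ``elsewhere in the paper'' as you suggest. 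Your first step (arranging the $\mathcal{L}$-amalgam with $A\cap B=C$ via nonforking over the algebraically closed set $C$) is fine and is treated as routine in the paper; it is the verification of the $T_R$-axiom on mixed tuples that needs the argument above.
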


\begin{proof}
We may assume that (the reducts to $\mathcal{L}$ of) $A$, $B$ and $C$ are substructures of some model $M \models T$, with $A \cap B = C$. In the predicate version, we expand $M$ to a $\mathcal{L}$-structure extending $A$ and $B$ as follows: $P(M)=P(A) \cup P(B)$. In the equivalence relation version, we let $E(M)$ be the finest equivalence relation extending $E(A)$ and $E(B)$, so $A/E \cap B/E = C/E$ and each element of $M \backslash (A \cup B)$ is isolated. It remains to show $M \models T_{R}$.

We can assume without loss of generality that $\bar{m}$ is a triple with one coordinate in $B \backslash C$ and the other two in $A$ and must show that $M \models R(\bar{m})$ if the coordinates of $\bar{m}$ belong to $P(M)$, or to a single $E(M)$-equivalence class. But this is clear, as the one coordinate cannot be algebraic over the other two.

(Note that when an element is isolated in its own $E$-class as an element of a set, when that set is identified with a subset of a model of $T^{R}$, it is not isolated within that model.)
\end{proof}

\begin{proposition}\label{qe2}
Let the $\mathcal{L}$-definable ternary relation $R$ be such that $\neg R$ fiber-algebraic, and assume the equivalence relation version of $T^{R}$ is well-defined. Let $A$ and $B$ be substructures of models of $T^{R}$, algebraically closed in the sense of $\mathcal{L}$. Then if $\mathrm{qftp}_{\mathcal{L^{E}}}(A) = \mathrm{qftp}_{\mathcal{L^{E}}}(B)$, $\mathrm{tp}_{\mathcal{L^{E}}}(A) = \mathrm{tp}_{\mathcal{L^{E}}}(B)$. The algebraic closure in the sense of $\mathcal{L}$ and $\mathcal{L}^{E}$ coincide, so $T^{R}$ has quantifier elimination up to finite covers and the completions of $T^{R}$ are determined by $E(\mathrm{acl}(\emptyset))$. 

The same holds for the predicate version, replacing $\mathcal{L}^{E}$ with $\mathcal{L}^{P}$
\end{proposition}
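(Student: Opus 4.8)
The plan is to prove quantifier elimination up to finite covers by a standard back-and-forth argument, using Lemma \ref{qe1} as the amalgamation input. First I would reduce to the following statement: if $A, B$ are $\mathcal{L}$-algebraically closed substructures of models of $T^R$ with $\mathrm{qftp}_{\mathcal{L}^E}(A) = \mathrm{qftp}_{\mathcal{L}^E}(B)$ via an isomorphism $f: A \to B$, then $f$ is elementary. Since $T^R$ is the model companion of $T_R$, it is model-complete, so it suffices to build, for any finite tuple $\bar{a}$ from a model $M \models T^R$ containing $A$, an extension of $f$ defined on $\mathrm{acl}_{\mathcal{L}}(A\bar{a})$ landing inside some model of $T^R$ extending $B$; iterating and taking unions gives an $\mathcal{L}^E$-embedding of $M$ into a model of $T^R$ over $B$, hence (by model-completeness) $f$ is elementary. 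The first lemma needed is that $\mathrm{acl}_{\mathcal{L}}$ and $\mathrm{acl}_{\mathcal{L}^E}$ coincide on models of $T^R$: one inclusion is trivial; for the other, given $b \in \mathrm{acl}_{\mathcal{L}^E}(C) \setminus \mathrm{acl}_{\mathcal{L}}(C)$ in a model $M$, one uses existential closedness of $M$ together with Lemma \ref{qe1} — since $b$ is not $\mathcal{L}$-algebraic over $C$ we can realize infinitely many $\mathcal{L}$-conjugates of $b$ over $\mathrm{acl}_{\mathcal{L}}(C)$ in an $\mathcal{L}$-extension, and extend the $\mathcal{L}^E$-structure (putting the new points in their own $E$-classes, or outside $P$, and checking the $T_R$ axiom via fiber-algebraicity of $\neg R$ exactly as in the proof of Lemma \ref{qe1}) so that these conjugates are $\mathcal{L}^E$-indistinguishable from $b$ over $C$, contradicting $\mathcal{L}^E$-algebraicity.

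For the back-and-forth step itself, take $\bar{a}$ finite over $A$ in $M \models T^R$ and let $A' = \mathrm{acl}_{\mathcal{L}}(A\bar{a}) \subseteq M$, which is an $\mathcal{L}$-algebraically closed substructure of a model of $T_R$. I want to produce an $\mathcal{L}^E$-isomorphism $A' \to B'$ extending $f$, where $B'$ is an $\mathcal{L}$-algebraically closed substructure of some model of $T_R$ extending $B$. First extend $f$ to an $\mathcal{L}$-isomorphism $A' \to B'_0$ onto an $\mathcal{L}$-algebraically closed extension of $B$ inside a monster model of $T$ — possible since $A, B$ have the same $\mathcal{L}$-type (as $\mathrm{qftp}_{\mathcal{L}^E}$ determines $\mathrm{qftp}_{\mathcal{L}}$ and $T$ has quantifier elimination) and $A'$ is $\mathcal{L}$-finitely generated over $A$. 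Now I must transport the $\mathcal{L}^E$-structure: define $E$ (resp. $P$) on $B'_0$ so that $f$ becomes an $\mathcal{L}^E$-isomorphism $A' \to B'_0$. The key point is that $B \subseteq B'_0$ as an $\mathcal{L}^E$-substructure, i.e. the transported $E$-structure restricts correctly to $B$ — but this is guaranteed precisely because $f|_A: A \to B$ was already an $\mathcal{L}^E$-isomorphism and $E$-structure on $A$ restricts to that on $A$ inside $A'$. Then Lemma \ref{qe1} applied to $C = B$, the two structures being $B$ and $B'_0$ (with $B \cap B'_0 = B$, which we arrange since $B'_0$ was chosen generically over $B$), produces a model $D \models T_R$ containing both $B$ and $B'_0 \cong A'$ as $\mathcal{L}^E$-substructures; embedding $D$ into a model of $T^R$ completes one step.

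The main obstacle I expect is the bookkeeping around $\mathcal{L}^E$-substructures and the "$A \cap B = C$" hypothesis of Lemma \ref{qe1}: one has to be careful that when transporting the equivalence relation (or predicate) along the $\mathcal{L}$-isomorphism $A' \to B'_0$, the result genuinely extends the $\mathcal{L}^E$-structure on $B$ and that $B'_0$ can be chosen $\mathcal{L}$-generically over $B$ so as to meet $B$ in exactly $B$ — here one uses that $A'$ is $\mathcal{L}$-algebraic over $A$, so $B'_0$ is $\mathcal{L}$-algebraic over $B$, and any element of $B'_0 \cap \mathrm{(monster)}$ lying over $B$ but not in $B$ can be moved off $B$ by an automorphism fixing $B$ pointwise while preserving $\mathrm{tp}_{\mathcal{L}}(B'_0/B)$; genericity plus $\mathcal{L}$-algebraic-closedness of $B$ forces $B'_0 \cap B = B$. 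Everything else — that model-completeness of $T^R$ reduces elementarity to embeddability, that finitely-generated-over-$A$ extensions suffice by a chain argument, and that completions are indexed by the isomorphism type of $E$ on $\mathrm{acl}(\emptyset)$ — is routine once the amalgamation step is in place. Finally, the statement about completions follows since two models of $T^R$ with the same $E$-structure (resp. $P$-structure) on $\mathrm{acl}_{\mathcal{L}}(\emptyset) = \mathrm{acl}_{\mathcal{L}^E}(\emptyset)$ have, by what we proved, the same $\emptyset$-type, hence are elementarily equivalent.
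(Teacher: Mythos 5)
Your overall strategy (reduce to amalgamation over $\mathcal{L}$-algebraically closed sets via Lemma \ref{qe1} and finish with model-completeness of $T^{R}$) is the right one, but as written the key step is garbled in a way that leaves a genuine gap. In the back-and-forth step you apply Lemma \ref{qe1} to ``the two structures $B$ and $B'_{0}$ over $C = B$''; since $B \subseteq B'_{0}$, this amalgamation is vacuous and produces nothing beyond $B'_{0}$ itself, which is already a model of $(T_{R})_{\forall}$. More seriously, the model $N \models T^{R}$ in which $B$ actually sits never appears anywhere in your argument: at every stage you embed the transported copy into a \emph{freshly built} model of $T^{R}$ extending $B$. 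Consequently the union of your chain gives an $\mathcal{L}^{E}$-embedding of $M$ into \emph{some} model $N'' \supseteq B$ of $T^{R}$, hence $\mathrm{tp}_{M}(A) = \mathrm{tp}_{N''}(B)$ --- but not $\mathrm{tp}_{N}(B) = \mathrm{tp}_{M}(A)$ for the given ambient model $N$, and iterating your construction only ever produces further fresh models, never $N$. Since the proposition (and in particular the clause about the completions of $T^{R}$ being determined by $E(\mathrm{acl}(\emptyset))$) is precisely a statement comparing types computed in two \emph{given} models, this is not mere bookkeeping: the step that would tie the construction to $N$ is missing. The fix is to make the given model enter the amalgamation: either apply Lemma \ref{qe1} once, wholesale, to $M$ and an $\mathcal{L}^{E}$-isomorphic copy of $N$ in which $B$ has been identified with $A$ via $f^{-1}$ (arranging, by the standard fact that a conjugate of a set over an algebraically closed base can be chosen to meet a given set only in that base, that the copy meets $M$ exactly in $A$), embed the resulting model of $T_{R}$ into a model $D'$ of $T^{R}$, and use model-completeness to get $M \preceq D'$ and $N \preceq D'$ simultaneously --- this is the ``standard argument'' the paper invokes and needs no chain construction at all; or else fix a sufficiently saturated $N^{*} \succeq N$ at the outset and run your one-step extension so that each transported copy is amalgamated with $N^{*}$ over the current image and then realized \emph{inside} $N^{*}$ (consistency with the elementary diagram of $N^{*}$ again coming from Lemma \ref{qe1} plus model-completeness).

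A secondary point: your first lemma, $\mathrm{acl}_{\mathcal{L}} = \mathrm{acl}_{\mathcal{L}^{E}}$, is justified by making the new $\mathcal{L}$-conjugates of $b$ ``$\mathcal{L}^{E}$-indistinguishable from $b$ over $C$''; but making them quantifier-free indistinguishable is easy, whereas full $\mathcal{L}^{E}$-indistinguishability is exactly the type-determination statement you are in the middle of proving, so as ordered the argument is circular. The cure is to reverse the order: prove the type-determination first (it only uses Lemma \ref{qe1} and model-completeness, not the equality of closures), and then deduce $\mathrm{acl}_{\mathcal{L}} = \mathrm{acl}_{\mathcal{L}^{E}}$ from it, since the freely expanded conjugates then realize the full $\mathcal{L}^{E}$-type of $b$ over $C$ in an elementary extension of $M$, giving infinitely many solutions of any putatively algebraic formula.
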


\begin{proof}
Follows from the previous lemma by the standard arguments.
\end{proof}

\section{Conant-independence}

We first give an overview of classification theory beyond the simple theories; some of this discussion will be for motivation. We will consider relational expansions of the \textit{$\mathrm{NSOP}_{1}$ theories} first formally introduced in \cite{DS04}, a class which contains all simple theories.

\begin{definition}
A theory $T$ is $\mathrm{NSOP}_{1}$ if there does not exist a formula $\varphi(x, y)$ and tuples $\{b_{\eta}\}_{\eta \in 2^{<\omega}}$ so that $\{\varphi(x, b_{\sigma \upharpoonleft n})\}_{n \in \omega}$ is consistent for any $\sigma \in 2^{\omega}$, but for any $\eta_{2} \unrhd \eta_{1} \smallfrown \langle 0\rangle$, $\{\varphi(x, b_{\eta_{2}}), \varphi(x, b_{\eta_{1} \smallfrown \langle 1\rangle})\}$ is inconsistent. Otherwise it is $\mathrm{SOP}_{1}$.
\end{definition}

The main stability-theoretic tool for studying $\mathrm{NSOP}_{1}$ theories is \textit{Kim-independence}, a notion introduced by Kaplan and Ramsey (\cite{KR17}) that coincides with forking-independence in simple theories. Recall that a global type $p(x)$ is \textit{invariant} over a model $M$ if whether $\varphi(x, b)$ belongs to $p$ for $\varphi(x, y)$ a fixed formula without parameters depends only on the type of the parameter $b$ over $M$ and not on the specific realization of that type, and that an infinite sequence $\{b_{i}\}_{i \in I}$, is an \textit{invariant Morley sequence} over $M$ if there is a fixed global type $p(x)$ invariant over $M$ so that $b_{i} \models p(x)|_{M\{b_{j}\}_{j < i}}$ for $i \in I$.

\begin{definition}
A formula $\varphi(x, b)$ \emph{Kim-divides} over $M$ if there is an invariant Morley sequence $\{b_{i}\}_{i \in \omega}$ starting with $b$ (said to \emph{witness} the Kim-dividing) so that $\{\varphi(x, b_{i})\}_{i \in \omega}$ is inconsistent. A formula  $\varphi(x, b)$ \emph{Kim-forks} over $M$ if it implies a (finite) disjunction of formulas Kim-dividing over $M$. We write $a \ind^{K}_{M} b$, and say that $a$ is \emph{Kim-independent} from $b$ over $M$ if $\mathrm{tp}(a/Mb)$ does not include any formulas Kim-forking over $M$.
\end{definition}

For example, in the algebraically closed fields with a generic additive subgroup $G$ from \cite{D18}, $A\ind^{K}_{M}B$ is given by the ``weak independence" $A \ind^{\mathrm{ACF}}_{M} B$ and $G(\mathrm{acl}(MA)+\mathrm{acl}(MB))=G(\mathrm{acl}(MA))+G(\mathrm{acl}(MB))$. Analogously to simplicity, there is the following characterization of $\mathrm{NSOP}_{1}$ theories:

\begin{fact}\label{kimlemma}
(\cite{KR17}) Let $T$ be $\mathrm{NSOP}_{1}$. Then for any formula $\varphi(x,b)$ Kim-dividing over $M$, any invariant Morley sequence over $M$ starting with $b$ witnesses Kim-dividing of $\varphi(x, b)$ over $M$. Conversely, suppose that for any formula $\varphi(x,b)$ Kim-dividing over $M$, any invariant Morley sequence over $M$ starting with $b$ witnesses Kim-dividing of $b$ over $M$. Then $T$ is $\mathrm{NSOP}_{1}$.

It follows that Kim-forking coincides with Kim-dividing in any $\mathrm{NSOP}_{1}$ theory.

\end{fact}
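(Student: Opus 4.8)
The plan is to reduce both directions to \emph{Kim's lemma for $\mathrm{NSOP}_{1}$}: in an $\mathrm{NSOP}_{1}$ theory, if one invariant Morley sequence over $M$ starting with $b$ witnesses the inconsistency of $\{\varphi(x,b_{i})\}_{i<\omega}$, then every invariant Morley sequence over $M$ starting with $b$ does. I would prove this by contradiction. Suppose $q(x)$ and $p(x)$ are global $M$-invariant types extending $\mathrm{tp}(b/M)$, that $(b_{i})_{i<\omega}$ is an invariant Morley sequence over $M$ generated by $q$ with $\{\varphi(x,b_{i})\}_{i}$ $k$-inconsistent, but $(a_{i})_{i<\omega}$ is an invariant Morley sequence over $M$ generated by $p$ with $a_{0}=b$ and $\{\varphi(x,a_{i})\}_{i}$ consistent. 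I would then build a tree $(c_{\eta})_{\eta\in\omega^{<\omega}}$ of realizations of $\mathrm{tp}(b/M)$ that is simultaneously modeled on both sequences: each branch is an invariant Morley sequence over $M$ generated by $p$, so $\{\varphi(x,c_{\sigma\upharpoonright n})\}_{n}$ is consistent along every branch $\sigma$, while the ``right-kinked'' diagonals below any node are invariant Morley sequences over $M$ generated by $q$, so finitely many $\varphi(x,\cdot)$ along such a diagonal are $k$-inconsistent. Passing to a strongly indiscernible such tree via the modeling property for trees (Erd\H{o}s--Rado) and re-indexing by $2^{<\omega}$ --- replacing $\varphi(x,y)$ by the conjunction $\bigwedge_{j<k-1}\varphi(x,y_{j})$ over a block of $k-1$ consecutive diagonal nodes to turn $k$-inconsistency into $2$-inconsistency --- produces exactly the configuration in the definition of $\mathrm{SOP}_{1}$, contradicting the hypothesis. \textbf{The main obstacle is this tree construction}: arranging a single tree whose branches behave like $p$-Morley sequences and whose diagonals behave like $q$-Morley sequences, and then extracting enough indiscernibility. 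This is the technical heart of the argument and is carried out in \cite{KR17}.

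Granting Kim's lemma, the forward direction is immediate. For ``Kim-forking $=$ Kim-dividing'': Kim-dividing trivially implies Kim-forking, and if $\varphi(x,b)$ Kim-forks over $M$ then $\models\varphi(x,b)\rightarrow\bigvee_{i<n}\psi_{i}(x,c_{i})$ with each $\psi_{i}(x,c_{i})$ Kim-dividing over $M$. Take an invariant Morley sequence $(b^{j}c_{0}^{j}\cdots c_{n-1}^{j})_{j<\omega}$ over $M$ in the tuple $bc_{0}\cdots c_{n-1}$; by Kim's lemma each $\{\psi_{i}(x,c_{i}^{j})\}_{j<\omega}$ is $k_{i}$-inconsistent for some $k_{i}$. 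If $\{\varphi(x,b^{j})\}_{j<\omega}$ were consistent, a realization $a$ would satisfy $\bigvee_{i<n}\psi_{i}(a,c_{i}^{j})$ for every $j$, so over a sufficiently long run of indices $j$ the pigeonhole principle would give a single $i$ with $\psi_{i}(a,c_{i}^{j})$ holding for $k_{i}$ values of $j$, contradicting $k_{i}$-inconsistency. Hence $\{\varphi(x,b^{j})\}_{j}$ is inconsistent and $\varphi(x,b)$ Kim-divides over $M$.

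For the converse I argue contrapositively: assuming $T$ is $\mathrm{SOP}_{1}$, witnessed by $\varphi(x,y)$ and $(b_{\eta})_{\eta\in 2^{<\omega}}$ as in the definition, I exhibit a formula that Kim-divides over a model but admits an invariant Morley sequence failing to witness this. After replacing the tree by a strongly indiscernible one (Erd\H{o}s--Rado) and spreading it out into a Morley tree over a base model $M$ in the sense of \cite{KR17}, set $b:=b_{\langle 1\rangle}$. The diagonal $(b_{0^{n}\smallfrown\langle 1\rangle})_{n<\omega}$ starts with $b$ and is an invariant Morley sequence over $M$; it is $2$-inconsistent for $\varphi$, since for $m<n$, taking $\eta_{1}=0^{m}$ we have $0^{n}\smallfrown\langle 1\rangle\unrhd 0^{m+1}=\eta_{1}\smallfrown\langle 0\rangle$, so $\{\varphi(x,b_{0^{n}\smallfrown\langle 1\rangle}),\varphi(x,b_{0^{m}\smallfrown\langle 1\rangle})\}$ is inconsistent. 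Hence $\varphi(x,b)$ Kim-divides over $M$. On the other hand, any branch through the node $\langle 1\rangle$ --- for instance $(b_{\langle 1\rangle},b_{\langle 1,0\rangle},b_{\langle 1,0,0\rangle},\dots)$ --- is also an invariant Morley sequence over $M$ starting with $b$, along which $\{\varphi(x,\cdot)\}$ is consistent by the branch-consistency of the tree. So this invariant Morley sequence starting with $b$ does not witness Kim-dividing of $\varphi(x,b)$ over $M$; the hypothesis of the converse therefore fails, and the contrapositive is complete.
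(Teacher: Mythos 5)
This statement is quoted as a Fact from \cite{KR17} and the paper supplies no proof of its own, so the only comparison available is with the original Kaplan--Ramsey argument, which your sketch reconstructs faithfully: the forward direction and the forking-equals-dividing corollary are reduced to Kim's lemma exactly as in \cite{KR17} (your pigeonhole argument and the indiscernibility-gives-$k_i$-inconsistency step are both correct), and your contrapositive for the converse, extracting a consistent branch and a $2$-inconsistent diagonal through the same node of a strongly indiscernible $\mathrm{SOP}_1$-tree, is the argument of Proposition 3.14 there. The one caveat is that the genuinely hard step --- the tree construction proving Kim's lemma itself, and likewise the "spreading out" that makes both the branch and the diagonal invariant Morley sequences over a common model --- is acknowledged but deferred to \cite{KR17} rather than carried out, so as written this is a correct outline rather than a self-contained proof.
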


It is standard (Proposition 3.20 of \cite{KR17}) that $\ind^{K}$ satisfies extension: if $M \subseteq B \subseteq C$ and $a \ind^{K}_{M} B$ then there is $a' \equiv_{B} a$ with $a' \ind^{K}_{M} C$.

\begin{fact}\label{symm}
(\cite{CR15}, \cite{KR17}) The theory $T$ is $\mathrm{NSOP}_{1}$ if and only if $\ind^{K}$ is a symmetric relation over models.
\end{fact}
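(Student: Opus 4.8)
The plan is to prove the two implications separately; the second is the substantive one, and I would argue the first contrapositively. Suppose $T$ is $\mathrm{SOP}_{1}$, witnessed by a formula $\varphi(x,y)$ and a tree $(b_{\eta})_{\eta \in 2^{<\omega}}$. After passing to a strongly indiscernible tree over a suitable small model $M$ (via the tree-modeling theorem), the usual manipulations produce tuples $a, b$ over $M$ with $a \ind^{K}_{M} b$ but $b \nind^{K}_{M} a$: one takes $a$ to realize a path $\{\varphi(x, b_{\sigma \restriction n})\}_{n}$, which can be arranged to be a pattern along which $\varphi$ does not Kim-divide, so that $a$ is Kim-independent from $b = b_{\sigma \restriction m}$ over $M$; on the other hand, the membership of $b$ in that path together with the prescribed inconsistency across $0$-successors exhibits a formula in $\mathrm{tp}(b/Ma)$ that Kim-divides over $M$. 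Hence $\ind^{K}$ is not symmetric over models, proving the ``symmetry $\Rightarrow \mathrm{NSOP}_{1}$'' direction.

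For ``$\mathrm{NSOP}_{1} \Rightarrow$ symmetry'', beyond Fact~\ref{kimlemma} and extension for $\ind^{K}$ I would use two further inputs, both deducible from Kim's lemma: (i) the \emph{independence theorem over models} --- if $a_{0} \ind^{K}_{M} b$, $a_{1} \ind^{K}_{M} c$, $b \ind^{K}_{M} c$ and $a_{0} \equiv_{M} a_{1}$, then some $a$ satisfies $a \equiv_{Mb} a_{0}$, $a \equiv_{Mc} a_{1}$ and $a \ind^{K}_{M} bc$; and (ii) a \emph{chain condition} --- if $a \ind^{K}_{M} b$ and $(b_{i})_{i<\omega}$ is an $M$-invariant Morley sequence with $b_{0}=b$, then there is $a' \equiv_{Mb} a$ with $a' \ind^{K}_{M} (b_{i})_{i<\omega}$ and $(b_{i})_{i<\omega}$ indiscernible over $Ma'$. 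I would derive (ii) by induction on the length: having built an approximation over $b_{<n}$, glue to it a fresh copy of $a$ over $b_{n}$ (obtained from $a \ind^{K}_{M} b$ by extension after moving $b$ to $b_{n}$) using (i) and the fact that $b_{n} \ind^{K}_{M} b_{<n}$ (invariant Morley sequences are Kim-independent); then pass to a limit by compactness.

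Granting (i) and (ii), suppose toward a contradiction that $a \ind^{K}_{M} b$ while $b \nind^{K}_{M} a$. By Fact~\ref{kimlemma} (Kim-forking $=$ Kim-dividing) I may fix $\varphi(y, a) \in \mathrm{tp}(b/Ma)$ Kim-dividing over $M$, with $k$-inconsistency witnessed uniformly along all invariant Morley sequences in $\mathrm{tp}(a/M)$. Applying (ii) to $a \ind^{K}_{M} b$ yields $a' \equiv_{Mb} a$ and an $M$-invariant Morley sequence $(b_{i})_{i<\omega}$ with $b_{0}=b$, indiscernible over $Ma'$, with $a' \ind^{K}_{M} (b_{i})_{i}$; hence $\models \varphi(b_{i}, a')$ for all $i$, and $\varphi(y, a')$ still Kim-divides over $M$. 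I would then use $(b_{i})_{i}$ as the ``consistent branch'' direction and an $M$-invariant Morley sequence of $a'$ as the ``inconsistent'' direction, interleaving them with the help of (i) and extension, to assemble a tree realizing $\mathrm{SOP}_{1}$ for $\varphi$ --- contradicting the hypothesis.

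I expect the main obstacle to be this final array construction: one must arrange consistency down every branch (which comes from the path property together with indiscernibility of $(b_{i})_{i}$) while simultaneously forcing the $k$-inconsistency across $0$-successors (from Kim-dividing of $\varphi(y, a')$ via Kim's lemma), and verify that the two requirements do not collide --- this is exactly the point at which $\mathrm{NSOP}_{1}$, rather than a weaker hypothesis, is essential. A secondary delicate point is proving the chain condition (ii) cleanly, since it requires threading the independence theorem through an induction while keeping the Morley sequence both invariant and indiscernible over the growing parameter set.
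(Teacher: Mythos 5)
The paper does not prove this statement at all: it is quoted as a black-box Fact from \cite{CR15} and \cite{KR17}, where it is the main theorem, so your attempt is really a blind reconstruction of Kaplan--Ramsey's argument rather than something to be matched against a proof in this paper. Judged on its own terms, the sketch has a genuine gap in the substantive direction. Your ingredient (i), the independence theorem for $\ind^{K}$ over models, is not ``deducible from Kim's lemma'': in \cite{KR17} it is Theorem 6.5 (quoted here as Fact~\ref{KP}(1b)), and its proof comes \emph{after} and makes essential use of symmetry of $\ind^{K}$ (Theorem 5.16 there), which is exactly what you are trying to establish. No symmetry-free proof of the independence theorem is known, so as written your plan for ``$\mathrm{NSOP}_{1}\Rightarrow$ symmetry'' is circular, or at best rests on an unproved claim that would itself be a new result. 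By contrast, your ingredient (ii), the chain condition, is fine and in fact follows directly from Fact~\ref{kimlemma} without any induction through (i): consistency of $\bigcup_i \mathrm{tp}(a/Mb)(x,b_i)$ holds because no formula of the type Kim-divides, and after extracting $Ma'$-indiscernibility one gets $a'\ind^{K}_{M}(b_i)_i$ by applying Kim's lemma to the blocked sequence, which is again an invariant Morley sequence over $M$.

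The second problem is that the step you flag as ``the main obstacle'' --- assembling an $\mathrm{SOP}_{1}$ tree from the asymmetric pair by interleaving the sequence $(b_i)_i$ with an invariant Morley sequence in $\mathrm{tp}(a'/M)$ --- is precisely the content of the theorem, and it cannot be carried out by the naive interleaving you describe. In \cite{KR17} this is done with dedicated tree machinery (strongly indiscernible trees obtained from the tree modeling property, tree Morley sequences, and an inductive construction of deeper and deeper trees), not with the independence theorem plus extension; indeed the whole point of that development is that symmetry is obtained \emph{before} any amalgamation statement is available. Your sketch of the other direction ($\mathrm{SOP}_{1}\Rightarrow$ failure of symmetry) is also only a gesture (``the usual manipulations produce tuples $a,b$''), though its overall shape matches the argument in \cite{CR15}/\cite{KR17}. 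So the proposal identifies the right landmarks but, as a proof, is missing the central construction and relies on an ingredient whose only known proof presupposes the conclusion.
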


\begin{definition}
Let $n \geq 3$. A theory $T$ is $\mathrm{NSOP}_{n}$ (that is, does not have the \emph{n-strong order property}) if there is no definable relation $R(x_{1}, x_{2})$ with no $n$-cycles, but with tuples $\{a_{i}\}_{i \in \omega}$ with $\models R(a_{i}, a_{j})$ for $i <j$. Otherwise it is $\mathrm{SOP}_{n}$.
\end{definition}

Note that $\mathrm{NSOP}_{1} \subseteq \mathrm{NSOP}_{3}$ and for $3 \leq n < m$, $\mathrm{NSOP}_{n} \subsetneq \mathrm{NSOP}_{n}$ (\cite{She95}); it is open whether the former inclusion is strict. We also have the following property extending simplicity:

\begin{definition}
A theory $T$ is $\mathrm{NTP}_{2}$ (that is, does not have the \emph{tree property of the second kind}) if there is no array $\{b_{ij}\}_{i, j \in \omega}$ and formula $\varphi(x, y)$ so that there is some fixed $k$ so that, for all $i$, $\{\varphi(x, b_{ij})\}_{j \in \omega}$ is inconsistent, but for any $\sigma \in \omega^{\omega}$, $\{\varphi(x, b_{i\sigma(i)})\}_{(i) \in \omega}$ is consistent.
\end{definition}

The following generalization of Kim-independence beyond the $\mathrm{NSOP}_{1}$ case, \textit{Conant-independence},  was introduced in \cite{GFA}. There, any theory where \textit{Conant-independence} is symmetric was shown to be $\mathrm{NSOP}_{4}$, and Conant-independence was characterized in most of the known examples of $\mathrm{NSOP}_{4}$ theories, leaving open the question of whether all $\mathrm{NSOP}_{4}$ theories, in analogy with $\mathrm{NSOP}_{1}$ theories and Fact \ref{symm}, have symmetric Conant-independence.

\begin{definition}
Let $M$ be a model and $\varphi(x, b)$ a formula. We say $\varphi(x, b)$ \emph{Conant-divides} over $M$ if for \emph{every} invariant Morley sequence $\{b_{i}\}_{i \in \omega}$ over $M$ starting with $b$, $\{\varphi(x, b_i)\}_{i \in \omega}$ is inconsistent. We say $\varphi(x, b)$ \emph{Conant-forks} over $M$ if and only if it implies a disjunction of formulas Conant-dividing over $M$. We say $a$ is \emph{Conant-independent} from $b$ over $M$, written $a \ind^{K^{*}}_{M}b$, if $\mathrm{tp}(a/Mb)$ does not contain any formulas Conant-forking over $M$.
\end{definition}

Note that by Fact \ref{kimlemma}, Conant-independence really does coincide with Kim-independence in $\mathrm{NSOP}_{1}$ theories.

Finally, an additional property is required to complete our classification-theoretic account of generic expansions. It is related to the \textit{dividing order} from \cite{BYC07}, and implies that Conant-forking coincides with Conant-dividing.

\begin{definition}
We say a theory $T$ has the \emph{strong witnessing property} if for $M \prec \mathbb{M}$ there is some sufficiently saturated $\mathbb{M}_{1} \succ M$ (lying in a, say, a very large elementary extension of $\mathbb{M}$) with the following property:

For $b \subset \mathbb{M}_{1}$, $\mathrm{tp}(b/\mathbb{M})$ is an $M$-invariant type such that, if a Morley sequence in that type witnesses Kim-dividing of a formula $\varphi(x, b)$ over $M$, then any any Morley sequence in $\mathrm{tp}(b/M)$ witnesses Kim-dividing of $\varphi(x, b)$ over $M$.
\end{definition}

The following is Theorem 3.15 of \cite{GFA} (relying on the the arguments of \cite{Co15} and \cite{NSOP2}; see also footnote 1 of \cite{INDNSOP3}, and \cite{Lee22}), Theorem 3.16 of \cite{GFA} (using the argument of \cite{Co15}), and Theorem 6.2 of \cite{GFA} (though the arguments of Theorem 4.4 of \cite{Co15}, based on arguments originally due to \cite{Pat06} will suffice in this case):

\begin{fact}\label{conant}
    Let $T$ have the strong witnessing property, and let Conant-independence be symmetric over models. Then $T$ is $\mathrm{NSOP}_{4}$ either simple or $\mathrm{TP}_{2}$, and either $\mathrm{NSOP}_{1}$ or $\mathrm{SOP}_{3}$.
    \end{fact}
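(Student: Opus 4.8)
\emph{Proof sketch.} The plan is to establish the three assertions — that $T$ is $\mathrm{NSOP}_{4}$, that $T$ is simple or $\mathrm{TP}_{2}$, and that $T$ is $\mathrm{NSOP}_{1}$ or $\mathrm{SOP}_{3}$ — separately, each as a consequence of the symmetry of $\ind^{K^{*}}$ together with the strong witnessing property, adapting the arguments of \cite{GFA} and \cite{Co15}. The strong witnessing property would be used uniformly to arrange that Conant-dividing of a formula over a model $M$ is already witnessed by a Morley sequence in the type \emph{over} $M$, not merely in a type over a large saturated model; from this one extracts that Conant-forking equals Conant-dividing and that $\ind^{K^{*}}$ satisfies enough of the usual axioms — existence over models, monotonicity, and an algebraically reasonable chain condition in the style of \cite{KR18}, \cite{KTW22} — to be manipulated like an honest independence relation.

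For the $\mathrm{NSOP}_{4}$ clause I would argue by contradiction: suppose $T$ is $\mathrm{SOP}_{4}$, witnessed by a relation $R(x_{1},x_{2})$ with no $4$-cycles and an infinite $R$-increasing sequence. Using the chain condition to produce Conant-independent realizations of the relevant fragments of this configuration over a model, and then using symmetry of $\ind^{K^{*}}$ to reverse one of the $R$-edges, one manufactures a $4$-cycle, contradicting the choice of $R$. This is the exact analogue, one level higher in the $\mathrm{SOP}_{n}$ hierarchy, of the Kaplan--Ramsey proof that symmetry of Kim-independence forces $\mathrm{NSOP}_{1}$; the jump from $1$ to $4$ is an artifact of Conant-dividing quantifying over \emph{all} invariant Morley sequences beginning with $b$ rather than asserting the existence of a single one.

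For the two dichotomies I would follow Conant's strategy from \cite{Co15} (the arguments around Theorem 4.4, resting on \cite{Pat06}). If $T$ is not simple, some formula divides over a model; feeding this into the amalgamation and chain-condition machinery one builds a mutually indiscernible array that is path-consistent but row-inconsistent, i.e.\ a $\mathrm{TP}_{2}$ pattern. For the last clause, if $T$ is $\mathrm{SOP}_{1}$ one must upgrade an $\mathrm{SOP}_{1}$-tree to an $\mathrm{SOP}_{3}$-configuration; here the key input is the lemma of the author from \cite{NSOP2} (extractable from the proof of Proposition 3.14 of \cite{KR17}; see also \cite{INDNSOP3}, \cite{Lee22}), which supplies the rigid consistency/inconsistency structure — an infinite increasing chain with no $3$-cycles — out of the relatively loose data of the tree. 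I expect this last step to be the principal obstacle: producing the $\mathrm{SOP}_{3}$ pattern is precisely where a direct free-amalgamation argument is insufficient and the technical lemma from \cite{NSOP2} is required.
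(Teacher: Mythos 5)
This statement is not proved in the paper: it is imported as a black-box Fact, cited as Theorems 3.15, 3.16 and 6.2 of \cite{GFA}, with the underlying arguments attributed to \cite{Co15}, \cite{Pat06}, and a lemma from \cite{NSOP2}. So your sketch is being compared against a citation, not a proof. At that level your reconstruction is broadly faithful to the cited strategy: the strong witnessing property is indeed what collapses Conant-forking to Conant-dividing and lets one realize types along arbitrary invariant Morley sequences; the simple-versus-$\mathrm{TP}_{2}$ dichotomy does come from Conant's array construction; and you correctly locate the hard point of the $\mathrm{NSOP}_{1}$-or-$\mathrm{SOP}_{3}$ clause in the lemma from \cite{NSOP2} (extractable from the proof of Proposition 3.14 of \cite{KR17}). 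Two caveats. First, describing the $\mathrm{NSOP}_{4}$ clause as ``the exact analogue, one level higher'' of the Kaplan--Ramsey proof that symmetry of $\ind^{K}$ gives $\mathrm{NSOP}_{1}$ mischaracterizes the argument: that direction of Kaplan--Ramsey is a tree/chain-condition argument, whereas symmetry of $\ind^{K^{*}}$ implies $\mathrm{NSOP}_{4}$ by the cycle-gluing argument of Theorem 4.4 of \cite{Co15} (going back to \cite{Pat06}) --- one amalgamates two independent length-two paths between the endpoints of an $R$-edge, using symmetry to place the two halves on opposite sides, and contradicts the absence of $4$-cycles; it is this clause, not the dichotomies, that rests on \cite{Pat06}. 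Second, your final clause concedes that the $\mathrm{SOP}_{1}\Rightarrow\mathrm{SOP}_{3}$ upgrade is not actually carried out, so the sketch should be read as a correct map of the literature rather than a proof. Neither caveat affects the present paper, which only uses the statement as a citation.
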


We now give a general context for relational expansions of $\mathrm{NSOP}_{1}$ theories with free amalgamation. Let $T$ be a theory with quantifier elimination in a language $\mathcal{L}$ and let $\mathcal{L^{*}}$ be a relational expansion of $\mathcal{L}$. Let $T_{*}$ be a $\mathcal{L}^{*}$-theory expanding $T$. We assume that its model companion $T^{*}$ exists. We assume (1), and either (2) or (2').

(1) Quantifier elimination up to finite covers: Let $A$, $B$ be algebraically closed in the sense of $T$ and have the same quantifier-free $\mathcal{L}^{*}$-type. Then they have the same $\mathcal{L}^{*}$-type.

(2) Let $A, B$ be algebraically closed sets in $T$ and $M \models T$ with $M \subseteq A, B.$ Suppose $A \ind^{K}_{M} B$, and expand $\mathrm{acl}(AB)$ to a $\mathcal{L}^{*}$-structure restricting to a model of $(T_{*})_{\forall}$ (the theory of substructures of $T_{*}$) on $A$ and $B$, and with no new relations from $\mathcal{L}^{*}$ other than those entirely lying in $A$ or $B$ (that is, with $A$ and $B$ \textit{freely amalgamted} over $M$.) Then this expansion of $\mathrm{acl}(AB)$ is another model of $(T_{*})_{\forall}$.

(2$'$) The language $\mathcal{L}^{*}$ consists of $\mathcal{L}$ together with an additional binary relation symbols $E$. Let $A, B$ be algebraically closed sets in $T$ and $M \models T$ with $M \subseteq A, B.$ Suppose $A \ind^{K}_{M} B$, and expand $\mathrm{acl}(AB)$ to a $\mathcal{L}^{*}$-structure restricting to a model of $(T_{*})_{\forall}$ (the theory of substructures of $T_{*}$) on $A$ and $B$, and such that $A/E \cap B/E = M/E$ and each element of $\mathrm{acl}(AB)/(A \cup B)$ is isolated in its own $E$-equivalence class. Then this expansion of $\mathrm{acl}(AB)$ is another model of $(T_{*})_{\forall}$.

When $R$ is an fiber-algebraic definable ternary relation in $T$, and $T^{R}$ exists, $T^{R}$ satisfies both (1), and either (2) (predicate version) or (2') (equivalence relation version). The property (1) is Proposition \ref{qe2}, and the property (2) follows from the proof of Lemma \ref{qe1}.

\begin{example}\label{kimsets}
In every known $\mathrm{NSOP}_{1}$ theory including the simple theories, every type \textit{over a set} has a nonforking extension; under this condition, \cite{DKR22} extend Kim-independence to types over arbitrary sets. (See Example C.2 of \cite{KTW22}). Then, defining the free amalgamation property (2), (2') over arbitrary sets analogously, quantifier elimination (1) follows from either of these properties.
\end{example}

We would like to characterize Conant-independence under these assumptions. The use of ``base monotone" versions of the chain condition or the independence theorem in $\mathrm{NSOP}_{1}$ theories to develop the theory of independence in generic expansions of those theories is not new; see \cite{KR18} and \cite{KTW22}. While the ``algebraically reasonable chain condition" from \cite{KR18} suffices for the case where $\neg R$ is a fiber-algebraic definable ternary relation, we use the following result of \cite{KR19} to indicate the full reach of the inheritance of Kim-independence as Conant-independence under expansions.  Recall that a \textit{Morley sequence} in any ternary relation $\ind^{*}$ over $A$ is an $A$-indiscernible sequence $\{b_{i}\}_{i \in I}$ so that $b_{i} \ind^{*}_{A} b_{< i}$.

\begin{fact}\label{morley1} (\cite{KR19}, Proposition 6.5)
Let $T$ be $\mathrm{NSOP}_{1}$ and $M'\ind^{K}_{M} b$ with $M \prec M'$. Let $I=\{b_{i}\}_{i < \omega}$ be an invariant Morley sequence over $M$ starting with $b$. Then we can find $I' \equiv_{Mb} I$ with $M' \ind^{K}_{M} I$ and $I'$ an $\ind^{K}$-Morley sequence over $M'$. 

\end{fact}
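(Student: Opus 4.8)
The plan is to prove Fact~\ref{morley1} by a standard Kim-independence argument combining the chain condition, the independence theorem, and an Erd\H{o}s--Rado/indiscernibility extraction, all relative to the base $M'$.

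First I would recall the tools available in an $\mathrm{NSOP}_1$ theory $T$ over a model: Kim-independence $\ind^K$ over $M$ satisfies symmetry (Fact~\ref{symm}), extension (the Proposition~3.20 of \cite{KR17} cited above), the independence theorem over models, and the ``chain condition'' — if $\varphi(x,b)$ does not Kim-fork over $M$ and $\{b_i\}_{i<\omega}$ is an invariant Morley sequence over $M$ starting with $b$, then $\{\varphi(x,b_i)\}_i$ is consistent, realized by some $a$ with $a\ind^K_M \{b_i\}$. The setup is: $M\prec M'$, $M'\ind^K_M b$, and $I=\{b_i\}_{i<\omega}$ is an invariant Morley sequence over $M$ with $b_0=b$ (so there is a global $M$-invariant type $q$ with $b_i\models q|_{Mb_{<i}}$). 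I want $I'\equiv_{Mb} I$ with $M'\ind^K_M I'$ and $I'$ an $\ind^K$-Morley sequence over $M'$.

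The key steps, in order. Step 1: build a long version. Using the global $M$-invariant type $q$ defining $I$, extend $I$ to $\{b_i\}_{i<\lambda}$ for $\lambda$ large; since $q$ is $M$-invariant and $M'\ind^K_M b_0$, an inductive argument using that $\ind^K$-over-$M$ dependence is witnessed by finitely many coordinates, together with the chain condition applied along the sequence, lets me find a realization of the relevant type — more directly, I construct by transfinite recursion $b'_i$ for $i<\lambda$ with $b'_i\equiv_{Mb'_{<i}} b_i$ (so $\{b'_i\}$ is still an invariant Morley sequence over $M$ in $q$) and $M'\ind^K_M b'_{\le i}$: at each stage, since $M'\ind^K_M b'_{<i}$ and the type of $b_i$ over $Mb_{<i}$ is (the restriction of) an $M$-invariant type hence does not Kim-fork over $M$, extension/the independence theorem over $M$ lets me realize $\mathrm{tp}(b_i/Mb'_{<i})$ by some $b'_i$ with $M'\ind^K_M b'_{<i}b'_i$. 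By compactness and coherence of $q$-Morley sequences this produces $J=\{b'_i\}_{i<\lambda}$, an invariant Morley sequence over $M$ in $q$, with $M'\ind^K_M J$. Step 2: extract indiscernibility over $M'$. Apply Erd\H{o}s--Rado to $J$ to extract an $M'$-indiscernible sequence $I'=\{b''_i\}_{i<\omega}$ realizing the EM-type of $J$ over $M'$; in particular $I'\equiv_M I$, and since the $b''_i$ realize finite pieces of the type of tuples from $J$ over $M'$, one checks $I'\equiv_{Mb} I$ (arranging $b''_0$ to have the right type over $M$, or just noting the EM-type pins down $\mathrm{tp}(b''_0/M)$ and using an automorphism) and $M'\ind^K_M I'$ (Kim-nonforking over $M$ of $M'$ from finite subtuples of $J$ is preserved under the extraction, since it is a property of finite tuples and types, then take the union). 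Step 3: upgrade to $\ind^K$-Morley over $M'$. Since $I'$ is $M'$-indiscernible, it suffices to show $b''_i\ind^K_{M'} b''_{<i}$ for each $i$. Here I use that $I'$ is an invariant Morley sequence over $M$: $b''_i\ind^K_M b''_{<i}$ and even $b''_i\ind^K_{M} M'b''_{<i}$ can be arranged (by Step 1 the whole configuration is $\ind^K$-independent over $M$ from $M'$), and then a base-change / algebraically-reasonable-type argument — using $M'\ind^K_M b''_{<i}$ and symmetry — yields $b''_i\ind^K_{M'} b''_{<i}$; concretely, if $\varphi(x,b''_{<i})$ Kim-forked over $M'$ it would (since $M'\ind^K_M b''_{<i}$, so Kim-dividing over $M'$ can be witnessed by a Morley sequence coming from one over $M$) witness that $b''_i\nind^K_M M'b''_{<i}$, contradiction.

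The main obstacle I expect is Step 3: getting from ``$I'$ is $M'$-indiscernible and an invariant Morley sequence over $M$'' to ``$I'$ is a \emph{Kim}-Morley sequence over $M'$,'' because this requires transferring Kim-nonforking across the base extension $M\prec M'$ in the right direction, which genuinely uses $M'\ind^K_M I'$ (the hypothesis $M'\ind^K_M b$ propagated along the sequence in Step 1) together with symmetry and the characterization of Kim-dividing over $M'$ via Morley sequences that are themselves controlled over $M$. The bookkeeping ensuring the extracted $I'$ still starts with (a conjugate of) $b$ over $M$, rather than merely over $\emptyset$, is a routine but necessary point to get $I'\equiv_{Mb}I$ as stated.
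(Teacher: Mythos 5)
The paper does not actually prove this statement: it is imported verbatim as a Fact from \cite{KR19} (Proposition 6.5), so your attempt has to be measured against that external proof. Your Steps 1 and 2 are in the right spirit (an inductive construction via the independence theorem over $M$, then extraction of an $M'$-indiscernible sequence, noting that Kim-independence over $M$ of finite tuples is a property of types over $M'$ and so survives extraction). The genuine gap is Step 3, and it is exactly where the difficulty of the statement lives. After extraction, all your construction gives you is $b''_{i} \ind^{K}_{M} M'b''_{<i}$ and $M' \ind^{K}_{M} b''_{<i}$, and from this you infer $b''_{i} \ind^{K}_{M'} b''_{<i}$. That inference is a restricted form of base monotonicity for $\ind^{K}$, and it fails in strictly $\mathrm{NSOP}_{1}$ theories. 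Concretely, in the model companion of parameterized equivalence relations ($T^{*}_{feq}$), take $p \in P(M') \setminus M$ and elements $a, c \notin M'$ with $E_{p}(a,c)$, the class not represented in $M'$, and no other relations over $M'$: then $M' \ind^{K}_{M} c$ and $a \ind^{K}_{M} M'c$ hold (both only constrain relations whose parameter lies in $M$), yet $E_{p}(x,c)$ Kim-divides over $M'$, so $a \nind^{K}_{M'} c$. Nothing in your Step 1 excludes such ``new relations over parameters in $M' \setminus M$'' among the $b'_{i}$, because the independence theorem over $M$ only controls the types over $MM'$ and over $Mb'_{<i}$ separately, not the joint type over $M'b'_{<i}$. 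Your parenthetical justification (``Kim-dividing over $M'$ can be witnessed by a Morley sequence coming from one over $M$'') is essentially the hard content of \cite{KR19} -- their transitivity theorem and witnessing results -- and in the actual proof Kim-independence over $M'$ is built directly into the inductive construction (using a strengthened independence theorem over $M'$, with transitivity $a \ind^{K}_{M} M'$, $a \ind^{K}_{M'} b \Rightarrow a \ind^{K}_{M} M'b$ used to preserve independence from $M'$ over $M$), rather than recovered afterwards from $\ind^{K}_{M}$-data as you attempt.

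Two secondary points. First, the concatenation in Step 1 (from $M' \ind^{K}_{M} b'_{<i}$ and $b'_{i} \ind^{K}_{M} M'b'_{<i}$ to $M' \ind^{K}_{M} b'_{\leq i}$) is not among the basic properties of $\ind^{K}$ listed in the paper (symmetry, extension, independence theorem, chain condition) and itself requires an argument or citation; since base monotonicity is unavailable, the stable/simple-theory proof of this does not transfer. Second, to get $I' \equiv_{Mb} I$ \emph{together with} the conclusions about the fixed model $M'$, you must arrange that every element of the long sequence realizes $\mathrm{tp}(b/M')$ (not merely $\mathrm{tp}(b/M)$, as you suggest), so that the final automorphism sending $b''_{0}$ to $b$ can be chosen over $M'$ and therefore preserves both $M' \ind^{K}_{M} I'$ and $\ind^{K}$-Morleyness over $M'$; this is fixable by amalgamating $\mathrm{tp}(b/MM')$ at each step, but as written it is another loose end.
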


We show that $I'$ has the necessary ``algebraic reasonability" properties. See Theorem 2.21 of \cite{KR18} for a related result proven using similar techniques, and Theorem C.15 of \cite{KTW22} for a result that would work in place of these facts in the case where $\ind^{K}$ is defined over sets (Example \ref{kimsets}).

\begin{fact}\label{morley2}
    In the above fact, let $I' = \{b'_{i}\}_{i < \omega}$. Then for any $i < \omega$ $\mathrm{acl}(MI') \cap \mathrm{acl}(b'_{i}M')=\mathrm{acl}(b'_{i}M)$.
\end{fact}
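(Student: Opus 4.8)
The inclusion $\mathrm{acl}(b'_iM) \subseteq \mathrm{acl}(MI') \cap \mathrm{acl}(b'_iM')$ is immediate, since $b'_i \in I'$ and $M \subseteq M'$; the content is the reverse inclusion, which I would rephrase as the algebraic independence statement $\mathrm{acl}(b'_iM') \cap \mathrm{acl}(b'_iMI') \subseteq \mathrm{acl}(b'_iM)$, noting that $\mathrm{acl}(b'_iMI') = \mathrm{acl}(MI')$. The plan is to prove this first over the base $M$ by formal manipulation of Kim-independence, and then to upgrade the base to $\mathrm{acl}(b'_iM)$, this last step being the delicate one, handled by adapting the proof of the algebraically reasonable chain condition (Theorem 2.21 of \cite{KR18}).

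For the base-$M$ version, write $I'_{\neq i} = I' \setminus \{b'_i\}$. First I would record that, since $I'$ is an $\ind^{K}$-Morley sequence over $M'$ — and, passing to a suitable (e.g.\ tree-Morley) refinement of $I'$ if necessary, one may take it fully independent over $M'$ — we have $b'_i \ind^{K}_{M'} I'_{\neq i}$; and that $M' \ind^{K}_{M} I'$ gives, by symmetry (Fact \ref{symm}) and monotonicity, $I'_{\neq i} \ind^{K}_{M} M'$. Applying transitivity of Kim-independence over models (recall $M \prec M'$; cf.\ \cite{KR19}) to $I'_{\neq i} \ind^{K}_{M} M'$ and $I'_{\neq i} \ind^{K}_{M'} b'_i$ yields $I'_{\neq i} \ind^{K}_{M} M'b'_i$, hence $M'b'_i \ind^{K}_{M} I'_{\neq i}$ by symmetry. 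Since Kim-independence over a model implies algebraic independence, this gives $\mathrm{acl}(b'_iM') \cap \mathrm{acl}(MI'_{\neq i}) = M$.

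The remaining and main step is to strengthen $M'b'_i \ind^{K}_{M} I'_{\neq i}$ to the algebraic-closure statement $\mathrm{acl}(b'_iM') \cap \mathrm{acl}(b'_iMI'_{\neq i}) = \mathrm{acl}(b'_iM)$, that is, to control algebraic closure over the base $\mathrm{acl}(b'_iM)$ rather than over $M$ — morally, to move $b'_i$ into the base, which is a base-monotonicity move that $\ind^{K}$ does not satisfy in general, so that it cannot be read off formally. When $\ind^{K}$ is defined over arbitrary sets (Example \ref{kimsets}), one gets it cleanly from $M' \ind^{K}_{\mathrm{acl}(b'_iM)} I'_{\neq i}$ (Theorem C.15 of \cite{KTW22}) together with the fact that Kim-independence over a set implies algebraic independence. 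In general one must instead carry the algebraic bookkeeping through the very construction of $I'$ in Fact \ref{morley1}: following Theorem 2.21 of \cite{KR18}, at the inductive stage where $b'_{n}$ is produced by the independence theorem for $\ind^{K}$, one additionally chooses $b'_{n}$ generically over the algebraic closure of the data assembled so far, so as to secure $\mathrm{acl}(M'b'_{\leq n}) \cap \mathrm{acl}(MI'_{\leq n}) = \mathrm{acl}(Mb'_{n})$ at each stage; this survives the limit, and the statement for an arbitrary $i$ then follows from $M'$-indiscernibility of $I'$ together with compactness. I expect this last bookkeeping — controlling algebraic closures through the independence-theorem amalgamation without the crutch of base monotonicity — to be the only genuine obstacle, everything else being routine.
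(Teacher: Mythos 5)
Your opening reduction is fine as far as it goes, but the argument does not close, and the place where it stops is exactly the content of the statement. Two concrete problems. First, the step $I'_{\neq i}\ind^{K}_{M'}b'_i$ is not available: an $\ind^{K}$-Morley sequence over $M'$ only gives $b'_j\ind^{K}_{M'}b'_{<j}$, and your hedge of ``passing to a tree-Morley refinement'' replaces $I'$ by a different sequence, whereas Fact \ref{morley2} is about the particular $I'$ delivered by Fact \ref{morley1} (and used as such later, in Claim \ref{conant2}). Second and more seriously, the crucial step --- upgrading the base from $M$ to $\mathrm{acl}(b'_iM)$ when $b'_i$ sits on both sides of the intersection --- is not proved but deferred to ``re-running the construction of $I'$ with algebraic bookkeeping \`a la Theorem 2.21 of \cite{KR18}.'' As written this is not a proof: (a) the inductive invariant you propose, $\mathrm{acl}(M'b'_{\leq n})\cap\mathrm{acl}(MI'_{\leq n})=\mathrm{acl}(Mb'_{n})$, is false on its face (its left side contains $b'_j$ for $j<n$); (b) securing a property only for the newest element and then invoking $M'$-indiscernibility does not give the statement for arbitrary $i$, since indiscernibility transfers properties of increasing tuples while the Fact requires comparing $b'_i$ with elements of $I'$ on both sides of it; and (c) modifying the construction would at best prove that \emph{some} sequence as in Fact \ref{morley1} has the extra property, not that the given one does.

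For comparison, the paper's proof needs none of this machinery (no transitivity, no base-monotonicity surrogate, no reopening of \cite{KR19}): by compactness stretch $I'$ to length $\kappa>2^{|T|+|\mathrm{acl}(M'b)|}$; if some element of $\mathrm{acl}(Mb'_0\cdots b'_n)\cap\mathrm{acl}(M'b'_0)$ lay outside $\mathrm{acl}(Mb'_0)$, then cutting the tail of $I'$ into blocks $\bar b'_j$ of length $n$ and using $M$-indiscernibility, every $\mathrm{acl}(Mb'_0\bar b'_j)$ would meet $\mathrm{acl}(M'b'_0)$ outside $\mathrm{acl}(Mb'_0)$; since these sets pairwise intersect only in $\mathrm{acl}(Mb'_0)$, this would force $\kappa$-many distinct points into $\mathrm{acl}(M'b'_0)$, which is too small --- a pigeonhole contradiction. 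You should either adopt an argument of this counting type or actually carry out (and correctly formulate) the inductive bookkeeping you gesture at; as it stands the proposal has a genuine gap at its main step.
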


\begin{proof}
    By compactness, it suffices to prove this when we replace $\omega$ with some large $\kappa$ (say, $\kappa > 2^{|T| + |\mathrm{acl}(M'b)|}$). Assume without loss of generality that $i = 0$; that is, we show $\mathrm{acl}(MI') \cap \mathrm{acl}(b'_{0}M')=\mathrm{acl}(b'_{0}M)$. Suppose $\mathrm{acl}(Mb'_{0}, \ldots, b'_{n})$ meets $\mathrm{acl}(M'b'_{0})$ outside of $\mathrm{acl}(Mb'_{0})$. Let $\{\bar{b}'_{j}\}_{j < \kappa}=\{b_{1+ (jn)}\ldots b_{1+(jn+(n-1))}\}_{j < \kappa}$ be the concatenation into blocks of size $n$ of the sequence $\{b'_{i}\}_{1 \leq i < \kappa}$. Then $\mathrm{acl}(Mb'_{0} \bar{b}'_{j})$ will, by indiscernibility of $I'$ over $M$, meet $\mathrm{acl}(M'b'_{0})$ outside of $\mathrm{acl}(Mb'_{0})$. But the $\mathrm{acl}(Mb'_{0} \bar{b}'_{j})$ will meet pairwise only in $\mathrm{acl}(Mb'_{0})$. So it is impossible for each of the $\mathrm{acl}(Mb'_{0} \bar{b}'_{j})$ to meet $\mathrm{acl}(M'b'_{0})$ outside of $\mathrm{acl}(Mb'_{0})$, as $\kappa$ is too large, a contradiction.
\end{proof}

We finally need the following fact, a strengthening of Kim's lemma, Fact \ref{kimlemma}:

\begin{fact}\label{kimkimlemma}(\cite{KR19}, Fact 5.1)
    Let $T$ be $\mathrm{NSOP}_{1}$, and let $\varphi(x, b)$ Kim-divide over M, and let $\{b_{i}\}_{i < \omega}$ be an $\ind^{K}$-Morley sequence starting with $b$. Then $\{\varphi(x, b_{i})\}_{i < \omega}$ is inconsistent.
\end{fact}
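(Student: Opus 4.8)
The plan is to prove the contrapositive: assume $\varphi(x,b)$ Kim-divides over $M$ and that $(b_i)_{i<\omega}$ is an $\ind^{K}$-Morley sequence over $M$ with $b_0 = b$, and suppose toward a contradiction that $\{\varphi(x,b_i)\}_{i<\omega}$ is realized by some $a$. The goal is to manufacture, out of this data, an object to which Kim's lemma (Fact \ref{kimlemma}) applies and which then yields inconsistency after all.

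The cleanest route goes through tree Morley sequences. One first upgrades the $\ind^{K}$-Morley sequence $(b_i)_{i<\omega}$ to a tree: using the independence theorem for $\mathrm{NSOP}_{1}$ theories (\cite{KR17}), build a spread-out tree $(c_{\eta})_{\eta\in\omega^{<\omega}}$ over $M$ whose leftmost branch is a copy of $(b_i)_{i<\omega}$, arranging at each level that the partial type asserting $\varphi(x,c_{\eta})$ as $\eta$ runs along the relevant branch remains consistent --- the independence theorem is precisely what lets the tree be spread out while still carrying a common realization of those $\varphi$-instances. Extracting a tree Morley sequence $(d_j)_{j<\omega}$ from this tree (with $d_0\equiv_{M}b$) yields a tree Morley sequence over $M$ starting, up to conjugacy over $M$, with $b$, along which $\{\varphi(x,d_j)\}_{j<\omega}$ is still consistent. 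But tree Morley sequences witness Kim-dividing in $\mathrm{NSOP}_{1}$ theories (\cite{KR17}), so since $\varphi(x,b)$ Kim-divides over $M$ the set $\{\varphi(x,d_j)\}_{j<\omega}$ must be inconsistent --- a contradiction. An equivalent way to organize the same argument, invoking only Fact \ref{kimlemma}, is to find a small model $N\succeq M$ with $b\ind^{K}_{M}N$ over which $\varphi(x,b)$ still Kim-divides (Kim-dividing is preserved on extending the base by a Kim-independent model, \cite{KR17}) and over which the repackaged sequence, together with the realization, can be turned into an invariant Morley sequence; such an $N$ is built by absorbing a leftward prolongation $(b_i)_{i<0}$ of the indiscernible sequence into the base, which is the mechanism that makes its positive tail behave like a finitely satisfiable sequence over $N$.

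The main obstacle is that the realization $a$ cannot be taken Kim-independent from $b$ over $M$ --- nor over any model containing the Kim-dividing parameter, since $\varphi(x,b)$ Kim-forks there --- so one cannot simply apply the independence theorem to amalgamate copies of $a$. Instead the independence theorem must be used to spread out the parameters (the $b_i$'s, or the tree of $c_\eta$'s) \emph{while keeping} a common realization of the $\varphi$-instances alive, and it is this simultaneous control --- independence of the parameters together with consistency of $\varphi$ along them --- that is the delicate point. All remaining ingredients (symmetry of $\ind^{K}$, Fact \ref{symm}; the independence theorem; preservation of Kim-dividing under independent base extension; and Kim's lemma for invariant and for tree Morley sequences) are standard for $\mathrm{NSOP}_{1}$ theories and go back to \cite{KR17}.
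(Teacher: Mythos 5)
You should first be aware that the paper contains no proof of this statement to compare against: it is imported verbatim as a black box, cited as Fact 5.1 of \cite{KR19} (Kaplan--Ramsey's transitivity paper). Judged on its own terms, your primary route has the right shape and is, in outline, the argument from that cited source: one shows that an $\ind^{K}$-Morley sequence over $M$ is (equivalently, has the EM-type over $M$ of) a tree Morley sequence over $M$, the trees being produced by iterated amalgamation using the independence theorem together with extraction of strongly indiscernible trees, and then Kim's lemma for tree Morley sequences from \cite{KR17} gives the inconsistency; since Kim-dividing of $\varphi(x,b)$ depends only on $\mathrm{tp}(b/M)$, starting the extracted sequence with some $d_{0}\equiv_{M}b$ is indeed enough.

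That said, as a proof your sketch has genuine gaps. The entire content of the result is the step you compress into one clause: that the spread-out, strongly indiscernible tree can be built so that its branches carry the EM-type over $M$ of the given $\ind^{K}$-Morley sequence. That is the hard induction in \cite{KR19}, and you assert it rather than argue it. Moreover, your description of the ``delicate point'' --- using the independence theorem to keep a common realization of the $\varphi$-instances alive --- cannot be taken literally: as you yourself note, the realization $a$ is not Kim-independent from the parameters over $M$ (or over any base containing them), so $a$ cannot appear in any application of the independence theorem. What is actually preserved is the EM-type of the branch; consistency of $\{\varphi(x,d_{j})\}_{j<\omega}$ then follows by compactness, and no realization needs to be carried along, so the ``simultaneous control'' you describe is not where the difficulty lies. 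Finally, your ``equivalent'' second route is not a safe alternative: preservation of Kim-dividing under extension of the base by a Kim-independent model is not in \cite{KR17} but is itself one of the main theorems of \cite{KR19}, established with (and essentially alongside) the witnessing statement you are trying to prove, so invoking it here is circular in spirit; and the claim that absorbing a leftward prolongation of the sequence into a model $N$ with $b\ind^{K}_{M}N$ turns the tail into an invariant (finitely satisfiable) Morley sequence over $N$ requires a real argument about how $N$ is chosen, not just the observation that tails of indiscernible sequences are finitely satisfiable in initial segments.
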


We now characterize Conant-independence in $T^{*}$, when $T^{*}$ satisfies both (1), and (2) or (2').

\begin{proposition} Let $T$ be any $\mathrm{NSOP}_{1}$ theory. Under assumptions (1) and (2) on the model companion $T^{*}$ of an expansion--quantifier elimination up to finite covers, and the free amalgamation property--Conant-independence is the relation $\ind^{T}$ over models of $T^{*}$ inherited from the underlying Kim-independence of $T$ (so in particular, is symmetric.) Moreover, $T^{*}$ has the strong witnessing property.

Under assumption (2'), the same is true, but where the relation $\ind^{T}$ is defined so that $A \ind^{T}_{M} B$ if and only if $A \ind_{M}^{K} B$ in $T$, and $\mathrm{acl}(AM)/E \cap \mathrm{acl}(BM)/E = M/E$.
\end{proposition}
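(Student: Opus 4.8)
The plan is to identify Conant-dividing in $T^*$ with a concrete "algebraically reasonable" failure of the underlying Kim-independence $\ind^T$, and then run the symmetry and strong-witnessing arguments as in \cite{KR18}, \cite{KTW22}. First I would observe the easy inclusion: if $A \ind^T_M B$, then using the free amalgamation property (2) (resp.\ (2$'$)) together with quantifier elimination up to finite covers (1), any formula $\varphi(x,b) \in \mathrm{tp}_{\mathcal{L}^*}(A/Mb)$ can be realized along a suitable invariant Morley sequence: build the Morley sequence $\{b_i\}$ over $M$ in $T$, amalgamate the $\mathcal{L}^*$-structures on the $\mathrm{acl}(Mb_i)$ freely over $M$ (legitimate since a Morley sequence is pairwise $\ind^K$, and freeness propagates), and use (1) to see the amalgam realizes $\bigwedge_i \varphi(x, b_i)$ in a model of $T^*$; hence $\varphi$ does not Conant-divide, so $A \ind^{K^*}_M B$. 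For the predicate version one amalgamates freely with $P$ the union; for the equivalence-relation version one must take the finest equivalence relation extending the pieces, which is exactly condition (2$'$), and the matching of $E$-classes is precisely the extra clause in $\ind^T$.

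Next I would prove the reverse inclusion, i.e.\ that a failure of $\ind^T$ yields Conant-dividing. Given $A \nind^T_M B$, this is witnessed either by $A \nind^K_M B$ in $T$, or (in the equivalence-relation case) by $\mathrm{acl}(AM)/E \cap \mathrm{acl}(BM)/E \supsetneq M/E$. In the first case, some $\mathcal{L}$-formula Kim-divides, and by Fact \ref{kimkimlemma} (Kim's lemma along $\ind^K$-Morley sequences) it is inconsistent along \emph{every} $\ind^K$-Morley sequence; since any invariant Morley sequence in $T^*$ restricts to one in $T$, that formula Conant-divides in $T^*$. In the second case, if $e \in \mathrm{acl}(AM) \cap \mathrm{acl}(BM)$ is $E$-related to something in $M$ but not itself in $(A\cup B)$'s shared part, one finds an $\mathcal{L}^*$-formula (asserting $E$-equivalence to a fixed $M$-element together with the algebraic constraints) that is inconsistent along any invariant Morley sequence, because along such a sequence the conjugates of $e$ are $E$-inequivalent by the freeness of the generic amalgam. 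This requires the algebraic-reasonability input: here is where Fact \ref{morley1} and Fact \ref{morley2} (or the "algebraically reasonable chain condition" of \cite{KR18}) are used — to guarantee that one can move to an $\ind^K$-Morley sequence $I'$ over $M'$ with $\mathrm{acl}(MI') \cap \mathrm{acl}(b'_i M') = \mathrm{acl}(b'_i M)$, so that the generic free $\mathcal{L}^*$-amalgam along $I'$ genuinely exists and kills the formula.

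Having shown $\ind^{K^*} = \ind^T$, symmetry is immediate: $\ind^K$ is symmetric over models in any $\mathrm{NSOP}_1$ theory (Fact \ref{symm}), and the extra $E$-class clause in $\ind^T$ is manifestly symmetric. For the strong witnessing property I would argue as in \cite{KR18}: given $M \prec \mathbb{M}$, take $\mathbb{M}_1 \succ M$ sufficiently saturated; for $b \subset \mathbb{M}_1$ with $\mathrm{tp}(b/\mathbb{M})$ an $M$-invariant type, run the chain-condition argument (Facts \ref{morley1}, \ref{morley2}) to upgrade an arbitrary Morley sequence in $\mathrm{tp}(b/M)$ to one that is $\ind^K$-Morley over a suitable $M' \succ M$ while remaining algebraically reasonable, then invoke Fact \ref{kimkimlemma} to conclude that inconsistency of $\{\varphi(x,b_i)\}$ along one such sequence forces it along all of them — first in $T$, then transferred to $T^*$ via the free-amalgamation analysis above. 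The main obstacle I anticipate is the reverse inclusion in the equivalence-relation case: one must carefully verify that the generic ($T^*$-definable) free amalgam along the Morley sequence $I'$ actually exists as a model of $(T_*)_\forall$ — this is exactly what the algebraic reasonability of $I'$ from Fact \ref{morley2} buys, but translating "the conjugates of $e$ stay $E$-inequivalent" into the inconsistency of an explicit $\mathcal{L}^*$-formula, uniformly enough to conclude Conant-dividing rather than merely dividing, is the delicate point, and is where the precise statement $\mathrm{acl}(MI')\cap\mathrm{acl}(b'_iM')=\mathrm{acl}(b'_iM)$ rather than a weaker chain condition is needed.
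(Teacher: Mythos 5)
Your outline has the right overall shape (two inclusions plus the strong witnessing property), but the real work is located in the wrong place, and the step you call the ``easy inclusion'' does not go through as sketched. Freely amalgamating the $\mathcal{L}^{*}$-structures on the $\mathrm{acl}(Mb_{i})$ over $M$ does produce a sequence that is invariant in the sense of $\mathcal{L}^{*}$ (by quantifier elimination), but it yields no realization of $\bigwedge_{i}\varphi(x,b_{i})$: to witness non-dividing of $\mathrm{tp}_{\mathcal{L}^{*}}(A/Mb)$ along the sequence you must glue a copy of the $\mathcal{L}^{*}$-structure on $\mathrm{acl}(M'b)$, where $M'\succ M$ contains $A$ (obtained from extension and symmetry of $\ind^{K}$), over \emph{every} term of the sequence, and condition (2)/(2$'$) only licenses this gluing when the sequence is $\ind^{K}$-Morley \emph{over $M'$}. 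That is precisely what Fact \ref{morley1} supplies, and Fact \ref{morley2}, i.e.\ $\mathrm{acl}(MI')\cap\mathrm{acl}(b'_{i}M')=\mathrm{acl}(b'_{i}M)$, is what guarantees that the glued structure restricted to $\mathrm{acl}(MI')$ is still the free amalgam over $M$, so that by (1) the new sequence $I'$ has the same $\mathcal{L}^{*}$-type over $Mb$ as the original and an $Ma$-indiscernible copy can be extracted. So Facts \ref{morley1} and \ref{morley2} belong in this direction, not in the one where you placed them; and one must also address that $\ind^{K^{*}}$ is defined via Conant-\emph{forking}, which is reduced to Conant-dividing only after the strong witnessing property is established (in the paper this is done using both claims before concluding $\ind^{T}\Rightarrow\ind^{K^{*}}$).

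Conversely, the direction ``failure of $\ind^{T}$ gives Conant-dividing'' needs no amalgam and neither of those facts: if $A\nind^{K}_{M}B$ in $T$, Fact \ref{kimkimlemma} together with the observation that the $\mathcal{L}$-reduct of any $\mathcal{L}^{*}$-invariant Morley sequence over $M$ is an $\ind^{K}$-Morley sequence in $T$ finishes it --- though that observation itself requires an argument you omit, since invariant Morley sequences are not preserved under reducts (the paper routes through an $M$-finitely satisfiable sequence and Fact \ref{kimlemma}). More seriously, your treatment of the extra case under (2$'$) is wrong as stated: the witnessing $E$-class is by definition \emph{not} the class of any element of $M$, so there is no formula ``asserting $E$-equivalence to a fixed $M$-element''; and the claim that the conjugates stay $E$-inequivalent ``by the freeness of the generic amalgam'' only concerns the free-amalgamation Morley sequences, whereas Conant-dividing requires inconsistency along \emph{every} invariant Morley sequence. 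The paper closes this case by passing to $(T^{*})^{\mathrm{eq}}$: the shared $E$-class is an imaginary algebraic over $Ma$ and over $Mb$ but not over $M$, so a \emph{stable} formula in $\mathrm{tp}(a/Mb)$ divides over $M$, and dividing of stable formulas is witnessed by every invariant Morley sequence, hence is Conant-dividing. Some argument of this kind is needed to make your second case correct.
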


\begin{proof}
We first show one direction of the implication:

\begin{claim}\label{conant1}
    In $T^{*}$, if $a\ind_{M}^{K^{*}} b$ then $a\ind^{T}_M b$.
\end{claim}

\begin{proof}
    Suppose that $a\ind_{M}^{K^{*}} b$ but $a\nind^{T}_M b$. We first show the following claim:

    \begin{subclaim}\
        Let $\{b_{i}\}_{i < \omega}$ be an invariant Morley sequence over $M$ in the sense of $T^{*}$. Then it is an $\ind^{K}$-Morley sequence in the sense of $T$.
    \end{subclaim}

    \begin{proof}
        Invariant Morley sequences are not preserved under taking reducts, but invariant Morley sequences in a \textit{finitely satisfiable} type are; we make use of this point.
        
        It suffices to show that, if we assume that $\mathrm{tp}^{\mathcal{L}^{*}}(c/Md)$ extends to an $M$-invariant global type, then $c \ind_{M}^{K} d$ in the sense of $T$. It follows from the assumption that there is an invariant Morley sequence $\{d_{i}\}_{i < \omega}$ over $M$ starting with $d$ in an $M$-\textit{finitely satisfiable} type, that is indiscernible over $Mc$ in the sense of $T^{*}$. It is then indiscernible over $Mc$ in the sense of $T$, and is an $M$-invariant Morley sequence in an $M$-fintely satisfiable type in the sense of $T$. Therefore, by Fact \ref{kimlemma}, $c \ind_{M}^{K} d$ in the sense of $T$.
    \end{proof}

    Now with (2) by $a\nind^{T}_M b$ and \ref{kimkimlemma}, there is a formula $\varphi(x, b) \in \mathrm{tp}_{\mathcal{L}}(a/Mb)$ so that, for \textit{any} $\ind^{K}$ Morley sequence $\{b_{i}\}_{i \in \omega}$ in the sense of $\mathcal{L}$ starting with $b$, $\{\varphi(x, b_{i})\}_{i \in \omega}$ is inconsistent.  But by the subclaim, every invariant Morley sequence in the sense of $\mathcal{L}^{*}$ is in particular such a sequence. So $\varphi(x, b)$ Conant-divides over $M$, contradiction.

    With (2'), we have the additional case that $\mathrm{acl}(aM)/E \cap \mathrm{acl}(bM)/E \neq M/E $. So in $(T^{*})^{\mathrm{eq}}$, $\mathrm{acl}(aM) \cap \mathrm{acl}(bM) \neq M$, and there is a stable formula in $\mathrm{tp}(a/Mb)$ dividing over $M$. So it divides over $M$ with respect to every invariant Morley sequence, and $a\nind_{M}^{K^{*}} b$

\end{proof}

The following will essentially give the other direction:

\begin{claim}\label{conant2}
Let $a \ind^{T}_{M} b$ and $I=\{b_{i}\}_{i\in \omega}$ be a Morley sequence over $M$ with respect to the free amalgamation given in (2) or (2'), invariant over $M$ in the sense of $\mathcal{L}$ and starting with $b$. Then there is some $I'\equiv^{\mathcal{L}^{*}}_{Mb} I$ indiscernible in the sense of $\mathcal{L}^{*}$ over $Ma$ (with $a \ind^{T}_{M} I'$.)

\end{claim}

\begin{proof}
    By the discussion following Fact \ref{kimlemma}, $\ind^{K}$ and thus $\ind^{T}$ satisfies the extension property. Noting that $\ind^{T}$ is also symmetric, we can find some $\mathcal{L}$-elementary extension $M'$ of $M$ containing $Ma$ so that $M' \ind_{M}^{T} b$. So by replacing $a$ with $M'$, we may assume $a = M' \models T$ is an $\mathcal{L}$-elementary extension of $M$. (It could even have been an $\mathcal{L}^{*}$-elementary extension, but we do not need this.) Note that $M' \ind^{T}_{M} b$ implies $M' \ind^{K}_{M} b$  in the sense of $T$. So in the reduct to $T$, we can choose $I'=\{b'_{i}\}_{i \in I}$ as in Fact \ref{morley1}; that is, some $I' \equiv^{\mathcal{L}}_{Mb} I$ with $M' \ind^{K}_{M} I$ and $I'$ an $\ind^{K}$-Morley sequence over $M'$, in the sense of $T$.

    We will find an expansion of $\mathrm{acl}_{\mathcal{L}}(M'I')$ satisfying $(T^{*})_{\forall}$ so that $\mathrm{acl}_{\mathcal{L}}(I'M)\equiv^{\mathcal{L}^{*}-\mathrm{qf}}_{Mb} \mathrm{acl}_{\mathcal{L}}(IM)$ and so that each $\mathrm{acl}_{\mathcal{L}}(b'_{i}M')$ realizes $\mathrm{qftp}^{\mathcal{L}^{*}}(\mathrm{acl}_{\mathcal{L}}(bM'))$. By the fact that $T^{*}$ is the model companion of $(T^{*})_\forall$, we can then take $I'$ to lie in a monster model of $T^{*}$. Then by (1)--quantifier elimination up to finite covers--$I'\equiv^{\mathcal{L}^{*}}_{Mb} I$ and each $b'_{i}$ realizes $\mathrm{tp}^{\mathcal{L}^{*}}(b/M')$. This will be enough, as we then can extract an $M'$-indiscernible sequence in the sense of $\mathcal{L}^{*}$ by Ramsey and compactness. 
    
    Since the $\mathrm{acl}(b'_{i}M')$ form an $\ind^{K}$-Morley sequence over $M'$, if (2) holds, by repeated applications of (2) we can expand the structure on $\mathrm{acl}_{\mathcal{L}}(M'I')$ so that $\mathrm{qftp}^{\mathcal{L}^{*}}(\mathrm{acl}(b'_{i}M'))=\mathrm{qftp}^{\mathcal{L}^{*}}(\mathrm{acl}(bM'))$, and introduce no further relations. If (2') holds, by repeated applications of (2'), we can expand the structure on $\mathrm{acl}_{\mathcal{L}}(M'I')$ so that $\mathrm{qftp}^{\mathcal{L}^{*}}(\mathrm{acl}(b'_{i}M'))=\mathrm{qftp}^{\mathcal{L}^{*}}(\mathrm{acl}(bM'))$, and take the finest equivalence relation satisfying this requirement. In either case, (2) or (2') will have told us that $\mathrm{acl}_{\mathcal{L}}(M'I')$ has been expanded to a model of $(T_{*})_\forall$. By construction, each $\mathrm{acl}_{\mathcal{L}}(b'_{i}M')$ realizes $\mathrm{qftp}^{\mathcal{L}^{*}}(\mathrm{acl}(bM'))$. 
    
    So it remains to show that $\mathrm{acl}_{\mathcal{L}}(I'M)\equiv^{\mathcal{L}^{*}-\mathrm{qf}}_{Mb} \mathrm{acl}_{\mathcal{L}}(IM)$. By \ref{morley2}, $\bigcup_{i =0}^{\omega} \mathrm{acl}_{\mathcal{L}}(M'b'_{i}) \cap \mathrm{acl}_{\mathcal{L}}(IM) = M$. Under (2), this means that, by not introducing any relations outside of the $\mathrm{acl}_{\mathcal{L}}(M'b'_{i})$, we introduced no relations on $\mathrm{acl}_{\mathcal{L}}(IM)$ that were not already on one of the $\mathrm{acl}(Mb'_{i})$. So the $\mathrm{acl}_{\mathcal{L}}(Mb'_{i})$, which by construction have the same quantifier-free type in $\mathcal{L}^{*}$ as the $\mathrm{acl}_{\mathcal{L}}(Mb'_{i})$, are in fact freely amalgamated over $M$. Therefore, $\mathrm{acl}_{\mathcal{L}}(I'M)\equiv^{\mathcal{L}^{*}-\mathrm{qf}}_{Mb} \mathrm{acl}_{\mathcal{L}}(IM)$. Under (2'), since $M' \ind_{M}^{T} b$, $M'/E\cap \mathrm{acl}_{\mathcal{L}}(Mb'_{i})/E = M/E$. So because $\mathrm{acl}_{\mathcal{L}}(Mb'_{i})/E \cap \mathrm{acl}_{\mathcal{L}}(Mb'_{j})/E \subseteq \mathrm{acl}_{\mathcal{L}}(M'b'_{i})/E \cap \mathrm{acl}_{\mathcal{L}}(M'b'_{j})/E= M'/E$, $\mathrm{acl}_{\mathcal{L}}(Mb'_{i})/E \cap \mathrm{acl}_{\mathcal{L}}(Mb'_{j})/E = M/E$. Moreover, $(\bigcup_{i =0}^{\omega} \mathrm{acl}_{\mathcal{L}}(M'b'_{i}) ) \cap \mathrm{acl}_{\mathcal{L}}(IM) = M$ shows that, isolating each element of $\mathrm{acl}_{\mathcal{L}}(I'M')$ outside of $\bigcup_{i =0}^{\omega} \mathrm{acl}_{\mathcal{L}}(M'b'_{i}) $, we have isolated each element of $\mathrm{acl}_{\mathcal{L}}(I'M)$ outside of $\bigcup_{i =0}^{\omega} \mathrm{acl}_{\mathcal{L}}(Mb'_{i}) $. So again, the $\mathrm{acl}_{\mathcal{L}}(Mb'_{i})$, are in fact freely amalgamated over $M$.

\end{proof}

Now we show the strong witnessing property. A Morley sequence with respect to free amalgamation over $M$, invariant over $M$ in the sense of $\mathcal{L}$, will by quantifier elimination also be invariant over $M$ in the sense of $\mathcal{L}^{*}$. So it suffices to show that if such a Morley sequence witnesses dividing of an $\mathcal{L}^{*}$-formula $\varphi(x, b)$ over $M$, then every invariant Morley sequence in the sense of $\mathcal{L}^{*}$ witnesses dividing of $\varphi(x, b)$ over $M$. Suppose not. Then there is some invariant Morley sequence in the sense of $\mathcal{L}^{*}$ $J=\{b_{i}\}_{i < \omega}$ starting with $b$ with $\{\varphi(x, b_{i})\}_{i < \omega}$ consistent. By Ramsey and compactness, choose $a$ realizing this set, so that $J$ is indiscernible over $Ma$. Then $\mathrm{tp}(a/Mb)$ contains no formulas Conant-dividing over $M$. So by the proof of Claim \ref{conant1}, $a \ind_{M}^{T}b$. By Claim \ref{conant2}, this contradicts the fact that $\varphi(x, b)$ divides with respect to a free amalgamation Morley sequence, invariant over $M$ in the sense of $\mathcal{L}$.

By the strong witnessing property, Conant-forking coincides with Conant-dividing. Since the Morley sequences considered in Claim \ref{conant2} are invariant over $M$ in the sense of $\mathcal{L}^{*}$, it follows from that claim that $\ind^{T}$ implies $\ind^{K^{*}}$. With Claim \ref{conant1}, this gives $\ind^{T} = \ind^{K^{*}}$

\end{proof}

By the previous proposition and \ref{conant}, we get the following corollary.

\begin{corollary}\label{classif}
Let $T$ be any $\mathrm{NSOP}_{1}$ theory. If the model companion $T^{*}$ of an expansion satisfies both (1), and (2) or (2'), it is either simple or $\mathrm{TP}_{2}$, and is either $\mathrm{NSOP}_{1}$ or strictly $\mathrm{NSOP}_{4}$; moreover, $\ind^{K^{*}}= \ind^{T}$.

\end{corollary}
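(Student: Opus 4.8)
The plan is to derive Corollary \ref{classif} by simply assembling the previous Proposition together with Fact \ref{conant}. The Proposition already does the substantive work: under the hypotheses (1) and either (2) or (2') on the model companion $T^{*}$ of an expansion of an $\mathrm{NSOP}_{1}$ theory $T$, it establishes two things simultaneously, namely that $T^{*}$ has the strong witnessing property, and that Conant-independence $\ind^{K^{*}}$ in $T^{*}$ coincides with the relation $\ind^{T}$ inherited from the Kim-independence of $T$ (in the appropriate form for the predicate or equivalence-relation case). The only additional observation needed is that $\ind^{T}$ is \emph{symmetric} over models of $T^{*}$: in the predicate case this is immediate from the symmetry of $\ind^{K}$ in the $\mathrm{NSOP}_{1}$ theory $T$ (Fact \ref{symm}), and in the equivalence-relation case one adds that the condition $\mathrm{acl}_{\mathcal{L}}(AM)/E \cap \mathrm{acl}_{\mathcal{L}}(BM)/E = M/E$ is manifestly symmetric in $A$ and $B$, so the conjunction is still symmetric. (The Proposition in fact already asserts this symmetry parenthetically.)

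First I would record that, by the Proposition, $\ind^{K^{*}} = \ind^{T}$ and $\ind^{T}$ is symmetric over models; hence Conant-independence is symmetric over models of $T^{*}$. Next I would note that $T^{*}$ has the strong witnessing property, again by the Proposition. These are precisely the two hypotheses of Fact \ref{conant}. Applying Fact \ref{conant} then yields that $T^{*}$ is $\mathrm{NSOP}_{4}$, is either simple or $\mathrm{TP}_{2}$, and is either $\mathrm{NSOP}_{1}$ or $\mathrm{SOP}_{3}$. To get the sharper ``strictly $\mathrm{NSOP}_{4}$'' in the non-$\mathrm{NSOP}_{1}$ alternative, I would observe that $\mathrm{NSOP}_{1} \subseteq \mathrm{NSOP}_{3}$, so a theory that is $\mathrm{NSOP}_{4}$ but not $\mathrm{NSOP}_{1}$ — equivalently, by the dichotomy from Fact \ref{conant}, one that is $\mathrm{SOP}_{3}$ — is in particular $\mathrm{SOP}_{3}$ and therefore not $\mathrm{NSOP}_{3}$, so it is strictly $\mathrm{NSOP}_{4}$ (it is $\mathrm{NSOP}_{4}$ but $\mathrm{SOP}_{3}$, hence also $\mathrm{SOP}_{1},\mathrm{SOP}_{2},\mathrm{SOP}_{3}$). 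Finally, I would restate the identity $\ind^{K^{*}} = \ind^{T}$ from the Proposition for completeness.

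There is essentially no obstacle here: the corollary is a direct bookkeeping consequence of the two preceding results, and the only point requiring a word of care is matching the "$\mathrm{NSOP}_{1}$ or $\mathrm{SOP}_{3}$" dichotomy of Fact \ref{conant} with the phrase "strictly $\mathrm{NSOP}_{4}$" in the statement, which is handled by the inclusion $\mathrm{NSOP}_{1} \subseteq \mathrm{NSOP}_{3}$ noted just after the definition of $\mathrm{NSOP}_{n}$. Thus the proof is: invoke the Proposition to obtain the strong witnessing property and symmetry of Conant-independence; invoke Fact \ref{conant}; and read off the conclusion, upgrading "$\mathrm{SOP}_{3}$" to "strictly $\mathrm{NSOP}_{4}$" via the hierarchy inclusions.
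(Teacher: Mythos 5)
Your proof is correct and follows exactly the paper's route: the paper derives Corollary \ref{classif} in one line from the preceding proposition (which supplies $\ind^{K^{*}}=\ind^{T}$, its symmetry, and the strong witnessing property) together with Fact \ref{conant}, just as you do. Your extra remark upgrading ``$\mathrm{SOP}_{3}$'' to ``strictly $\mathrm{NSOP}_{4}$'' via $\mathrm{NSOP}_{1}\subseteq\mathrm{NSOP}_{3}$ is the intended reading and needs no further argument.
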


If $T^{*}$ is $\mathrm{NSOP}_{1}$, this implies $\ind^{K} =\ind^{K^{*}} = \ind^{T}$. On the other hand, if $\ind^{K} = \ind^{T}$, then $\ind^{K}$ is symmetric, so $T^{*}$ is $\mathrm{NSOP}_{1}$ by Fact \ref{symm}.

In the case where $T^{*} = T^{R}$, our main result will be to characterize the $\mathrm{NSOP}_{1}$ case. We will use the Kim-Pillay characterization of $\mathrm{NSOP}_{1}$, from \cite{KR17}, to obtain an abstract criterion for $T^{*}$ to be $\mathrm{NSOP}_{1}$ in terms of $\ind^{T}$.

\begin{fact}\label{KP}
Let $T$ be any theory.

(1a) Let $\ind^{K^{*}}$ be symmetric and satisfy the independence theorem: for $a_{1} \ind^{K^{*}}_{M} b_{1}$, $a_{2} \ind^{K^{*}}_{M} b_{2}$, $b_{1} \ind^{K^{*}}_{M} b_{2}$, and $a_{1} \equiv^{\mathcal{L}^{*}}_{M} a_{2}$, there is some $a \ind^{K^{*}}_{M} b_{1}b_{2}$ with $a \equiv_{Mb_{i}}a_{i}$ for $i=1, 2$. Then $T$ is $\mathrm{NSOP}_{1}$.

(1b) If $T$ is $\mathrm{NSOP}_{1}$, then $\ind^{K^{*}}=\ind^{K}$ satisfies the independence theorem.

(2) Let $T$ be $\mathrm{NSOP}_{1}$. Then $T$ is simple if and only if $\ind^{K^{*}}= \ind^{K}$ satisfies base monotonicity: $a \ind^{K}_{M} B$ and $M \preceq M' \subseteq B$ implies $a \ind^{K}_{M'} B$.

\end{fact}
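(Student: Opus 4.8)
All three clauses are Kim--Pillay-type statements about $\mathrm{NSOP}_1$ and simplicity, and my plan is to reconstruct the standard proofs from the theory of Kim-independence. \textbf{For (1a)}, the first observation is that $\ind^{K^{*}}$ already satisfies almost all of the Chernikov--Ramsey axioms: it is $\mathrm{Aut}(\mathbb{M})$-invariant, has strong finite character (if $\varphi(x,b)\in\mathrm{tp}(a/Mb)$ Conant-forks over $M$, then it does so in $\mathrm{tp}(a'/Mb)$ for every realization $a'$ of $\varphi(x,b)$), is left- and right-monotone, and has existence over models (established in \cite{GFA}). Granting in addition symmetry and the independence theorem, the plan is to apply the Kim--Pillay-style criterion of Chernikov--Ramsey and Kaplan--Ramsey. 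Assuming for contradiction that $T$ is $\mathrm{SOP}_1$, witnessed by a formula $\varphi(x,y)$ and a tree $(b_{\eta})_{\eta\in 2^{<\omega}}$, I would first pass --- using the modeling property for trees --- to a sufficiently spread-out (e.g.\ $s$-indiscernible) copy of the tree, so that the nodes I will combine are $\ind^{K^{*}}$-independent over the relevant bases; then, by a transfinite/tree induction in which each splitting node is handled by one application of the independence theorem (maintaining Conant-independence over $M$), produce a single realization of both $\varphi(x,b_{\eta_{2}})$ for some $\eta_{2}\unrhd\eta_{1}\smallfrown\langle 0\rangle$ and $\varphi(x,b_{\eta_{1}\smallfrown\langle 1\rangle})$, contradicting the inconsistency clause defining $\mathrm{SOP}_1$. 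I expect the bookkeeping in this induction --- arranging via the spreading-out that every invocation of the independence theorem has its independence hypotheses satisfied --- to be the main obstacle.

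\textbf{For (1b)}, Fact~\ref{kimlemma} already gives $\ind^{K^{*}}=\ind^{K}$ whenever $T$ is $\mathrm{NSOP}_1$, so the statement is exactly the independence theorem for Kim-independence over models, a central result of \cite{KR17}, and I would follow that proof. From the data $a_{1}\ind^{K}_{M}b_{1}$, $a_{2}\ind^{K}_{M}b_{2}$, $b_{1}\ind^{K}_{M}b_{2}$, $a_{1}\equiv_{M}a_{2}$, one builds an invariant Morley sequence through $b_{1}$ and runs a zig-zag/back-and-forth construction to obtain a candidate $a$ realizing $\mathrm{tp}(a_{1}/Mb_{1})\cup\mathrm{tp}(a_{2}/Mb_{2})$, and then uses a Ramsey-type extraction together with Kim's lemma (Fact~\ref{kimlemma}) both to secure consistency of this union and to arrange $a\ind^{K}_{M}b_{1}b_{2}$; $\mathrm{NSOP}_1$ enters only through Kim's lemma, which transfers inconsistency of a family $\{\psi(x,c_{i})\}_{i}$ between invariant Morley sequences.

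\textbf{For (2)}, the forward direction is immediate: in a simple theory Kim-dividing coincides with dividing, so $\ind^{K}=\ind^{f}$ is forking-independence, which is base monotone over models by the Kim--Pillay theorem for simple theories. For the converse, I would show that base monotonicity is precisely the missing axiom. Assuming $T$ is $\mathrm{NSOP}_1$ and $\ind^{K}$ is base monotone over models, a base-monotone refinement of Kim's lemma --- replacing an arbitrary dividing sequence by an invariant Morley one while keeping the base fixed during its construction --- shows that Kim-dividing over a model coincides with dividing, hence $\ind^{K}=\ind^{f}$. Since $\ind^{K}$ is symmetric in every $\mathrm{NSOP}_1$ theory by Fact~\ref{symm}, forking-independence is then symmetric over models, so $T$ is simple by Kim's characterization of simplicity via symmetry of forking. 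Thus in an $\mathrm{NSOP}_1$ theory, base monotonicity of Kim-independence is equivalent to simplicity.
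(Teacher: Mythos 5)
Your proposal is correct and is essentially the paper's approach: the paper disposes of this Fact by citation to Kaplan--Ramsey --- Theorem 9.1 of \cite{KR17} for (1a) (after observing that invariance, monotonicity, strong finite character and existence over models hold for $\ind^{K^{*}}$ directly from its definition), Theorem 6.5 of \cite{KR17} for (1b), and Proposition 8.8 of \cite{KR17} for (2) --- and your sketches reconstruct exactly those standard arguments (the tree/independence-theorem proof of the Kim--Pillay-style criterion, the zig-zag proof of the independence theorem for $\ind^{K}$, and the base-monotonicity characterization of simplicity). Since the statement is a quoted Fact, the citations suffice, and your reconstruction adds no divergence in method.
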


\begin{proof}
    (1a) follows from the definition of $\ind^{K^{*}}$, and Theorem 9.1 of \cite{KR17}. (We did not actually need symmetry here, and could have given a proof using \ref{kimlemma}.) (1b) is Theorem 6.5 of \cite{KR17}. (2) is Proposition 8.8 of \cite{KR17}.
\end{proof}

Note that, when $T$ is stable and $T^{R}$ is well-defined, $\ind^{T}$ is base-monotone in the predicate version. So from Corollary \ref{classif} and Fact \ref{KP}, we get the following:

\begin{lemma}\label{reduction1}
Let $T$ be weakly minimal and let $R$ be a ternary relation definable in $T$. Assume $\neg R$ is fiber-algebraic. Then the equivalence relation version of $T^{R}$ is $\mathrm{NSOP}_{1}$ if and only if $\ind^{T}$ satisfies the independence theorem. Otherwise, $T^{R}$ is $\mathrm{TP}_{2}$ and strictly $\mathrm{NSOP}_{4}$.

For the predicate version of $T^{R}$, this is the same, but replace ``$\mathrm{NSOP}_{1}$" with ``simple."

\end{lemma}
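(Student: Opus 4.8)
The plan is to assemble the lemma from Corollary \ref{classif} and the Kim--Pillay-type criteria in Fact \ref{KP}; essentially all the model-theoretic content is already in place, and what remains is bookkeeping. First I would record the setup: a weakly minimal $T$ is stable and $\mathrm{nfcp}$ (\cite{Ga05}, as observed in \cite{CL20}), and since $\neg R$ is a fiber-algebraic ternary relation, both the predicate and the equivalence relation version of $T^{R}$ exist (Proposition \ref{mc}); each has quantifier elimination up to finite covers, i.e.\ property (1), by Proposition \ref{qe2}, and satisfies the relevant free amalgamation property -- (2) in the predicate case, (2') in the equivalence relation case -- by the proof of Lemma \ref{qe1}. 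Hence Corollary \ref{classif} applies: $T^{R}$ is either simple or $\mathrm{TP}_{2}$, either $\mathrm{NSOP}_{1}$ or strictly $\mathrm{NSOP}_{4}$, and $\ind^{K^{*}} = \ind^{T}$; in particular, by the Proposition preceding Corollary \ref{classif}, $\ind^{T}$ (hence $\ind^{K^{*}}$) is symmetric over models of $T^{R}$.

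Second, for the equivalence relation version I would establish the biconditional together with the dichotomy. If $T^{R}$ is $\mathrm{NSOP}_{1}$, then by Fact \ref{KP}(1b) $\ind^{K^{*}} = \ind^{K}$ satisfies the independence theorem, and since $\ind^{K^{*}} = \ind^{T}$ so does $\ind^{T}$. Conversely, if $\ind^{T}$ satisfies the independence theorem, then $\ind^{K^{*}} = \ind^{T}$ is symmetric and satisfies the independence theorem, so $T^{R}$ is $\mathrm{NSOP}_{1}$ by Fact \ref{KP}(1a). If instead $\ind^{T}$ fails the independence theorem, then by the contrapositive of the first implication $T^{R}$ is not $\mathrm{NSOP}_{1}$, hence strictly $\mathrm{NSOP}_{4}$ by Corollary \ref{classif}; being non-$\mathrm{NSOP}_{1}$ it is a fortiori non-simple, so Corollary \ref{classif} forces it to be $\mathrm{TP}_{2}$.

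Third, the predicate version differs only in that ``$\mathrm{NSOP}_{1}$'' can be replaced by ``simple''. In that version $\ind^{T}$ is, by definition, Kim-independence of the base theory $T$; since $T$ is stable this is forking-independence, which is base monotone -- this is the observation recorded just before the lemma. Thus whenever $T^{R}$ (predicate version) is $\mathrm{NSOP}_{1}$, Fact \ref{KP}(2) upgrades it to simple, so in the predicate version ``$\mathrm{NSOP}_{1}$'' and ``simple'' are interchangeable; rerunning the argument of the previous paragraph with this substitution gives the predicate statement, the ``otherwise'' clause being identical.

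I do not expect a genuine obstacle: the substantive work has been front-loaded into Corollary \ref{classif} and Fact \ref{KP}. The only points needing care are that the symmetry of $\ind^{K^{*}}$ we use is the one furnished by the Proposition preceding Corollary \ref{classif} -- invoking Fact \ref{symm} directly would be circular, since deciding $\mathrm{NSOP}_{1}$ is precisely the question at hand -- and that in the predicate version $\ind^{T}$ really is the forking-independence of the stable base theory, so that the base monotonicity hypothesis of Fact \ref{KP}(2) is met.
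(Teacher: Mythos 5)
Your proposal is correct and follows exactly the route the paper intends: the paper derives this lemma directly from Corollary \ref{classif} together with Fact \ref{KP} and the observation that $\ind^{T}$ is base monotone in the predicate version, which is precisely the assembly you carry out (including the non-circular use of symmetry via the proposition preceding Corollary \ref{classif} rather than Fact \ref{symm}). No gaps; you have simply written out in detail the derivation the paper leaves implicit.
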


We will use this criterion in the proof of our main result, Theorem \ref{main}, to translate the classification-theoretic properties of $T^{R}$ into properties of $T$. 

We conclude by showing that in the equivalence relation version, $T^{R}$, when $\mathrm{NSOP}_{1}$, is usually strictly $\mathrm{NSOP}_{1}$.

\begin{proposition}
    (Equivalence relation version) Let $T$ be weakly minimal and let $R$ be a ternary relation that is definable in $T$, and assume that $\neg R$ is fiber-algebraic. Suppose that in $T$ there are $a, b, A$ so that $\mathrm{acl}(Aab) \neq \mathrm{acl}(Aa) \cup \mathrm{acl}(Ab)$ (i.e. $T$ has nontrivial pregeometry). Then $T^{R}$ is not simple.
    
    \end{proposition}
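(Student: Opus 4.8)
The plan is to show that $T^R$ fails to be simple by exhibiting a failure of base monotonicity for $\ind^{K^*}$, which by Corollary \ref{classif} equals $\ind^T$, and invoking Fact \ref{KP}(2). Recall $A \ind^T_M B$ means $A \ind^K_M B$ in $T$ together with $\mathrm{acl}(AM)/E \cap \mathrm{acl}(BM)/E = M/E$. Since $T$ is weakly minimal, $\ind^K$ in $T$ is just nonforking independence, which does satisfy base monotonicity; so the failure must come from the extra $E$-condition. The idea is that the $E$-structure on $T^R$ is rich enough — by genericity of the equivalence relation, one can force two new elements into a common $E$-class as long as doing so does not violate the universal axiom, and the fiber-algebraicity of $\neg R$ (via Lemma \ref{qe1}) guarantees no such violation when the relevant triples are not algebraically constrained.

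First I would use the nontriviality hypothesis: pick $a, b, A$ in a model of $T^R$ with $c \in \mathrm{acl}(Aab) \setminus (\mathrm{acl}(Aa) \cup \mathrm{acl}(Ab))$, where by weak minimality we may assume (after adjusting) $a$ and $b$ are each a single nonalgebraic element independent over $A$ in the sense of $T$, and $c$ is nonalgebraic over $A$. Using Proposition \ref{qe2} (quantifier elimination up to finite covers) and the model-companion/amalgamation machinery from the proof of Proposition/Lemma \ref{qe1}, I would arrange an $E$-class containing $c$ but disjoint from $\mathrm{acl}(Aa)$ and from $\mathrm{acl}(Ab)$ — i.e. no element of $\mathrm{acl}(Aa)$ or $\mathrm{acl}(Ab)$ other than $c$'s companions is $E$-related to $c$, which is possible because $c \notin \mathrm{acl}(Aa) \cup \mathrm{acl}(Ab)$ and we are free to choose the $E$-class generically. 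Then set $M \models T^R$ containing $A$ (taking $M$ small, realized inside the monster appropriately via an $\mathcal L$-elementary/amalgamation argument) and work over this $M$; I would verify $a \ind^T_M ab c$-style configuration — more precisely, set $B = \mathrm{acl}(Mbc)$ and check $a \ind^T_M B$: the $T$-part holds because $a$ is independent from $bc$ over $M$ (as $c \in \mathrm{acl}(Mab)$ but independence of $a$ from $bc$ can be ensured by the choice of $a$ generic over $Mbc$ — here I'd need to be a little careful and instead build the configuration so that $a \ind^K_M bc$ in $T$ genuinely holds), and the $E$-part holds because $c$'s $E$-class was chosen disjoint from $\mathrm{acl}(Ma)$.

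Then the contradiction with base monotonicity: take $M' = \mathrm{acl}(Mb) \succeq M$ (an $\mathcal L^E$-elementary substructure — using that $\mathrm{acl}$ in $\mathcal L$ and $\mathcal L^E$ coincide by Proposition \ref{qe2}, and enlarging to an actual elementary submodel as needed). Now $M' \subseteq B$, and base monotonicity would require $a \ind^T_{M'} B$, in particular $\mathrm{acl}(M'a)/E \cap \mathrm{acl}(M'B)/E = M'/E$. But $c \in \mathrm{acl}(M'a) = \mathrm{acl}(Mab)$ since $c \in \mathrm{acl}(Aab)$, and also $c \in B = \mathrm{acl}(Mbc)$; so $c/E$ lies in both sides, yet $c/E \notin M'/E$ because $c \notin \mathrm{acl}(Mb)$ and $c$'s $E$-class was chosen to meet $\mathrm{acl}(Mb)$ only in companions of $c$ itself (which are outside $M'$). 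This gives $\mathrm{acl}(M'a)/E \cap \mathrm{acl}(M'B)/E \supsetneq M'/E$, so $a \nind^T_{M'} B$, contradicting base monotonicity. By Fact \ref{KP}(2) and Corollary \ref{classif}, $T^R$ is not simple.

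The main obstacle I anticipate is the bookkeeping in the first two steps: one must simultaneously (i) keep $a$ independent from $bc$ in the sense of $T$ while $c \in \mathrm{acl}(ab)$ — this requires choosing $a$ first and then realizing $b$, $c$ appropriately, or symmetrizing the roles — and (ii) certify that the desired $E$-configuration (a class containing $c$, meeting $\mathrm{acl}(Ma)$ and $\mathrm{acl}(Mb)$ trivially) actually occurs in a model of $T^R$, which requires an amalgamation/existential-closedness argument in the style of Lemma \ref{qe1} and a check that placing $c$ (but none of its non-companions in $\mathrm{acl}(Ma) \cup \mathrm{acl}(Mb)$) into a fresh $E$-class never forces a triple into $\neg R$, guaranteed by fiber-algebraicity since no new algebraicity among triples of distinct elements is created. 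Getting these compatibility conditions stated cleanly, rather than the abstract mechanism, is where the care is needed.
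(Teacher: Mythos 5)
There is a genuine gap at the heart of your configuration, and it is not just bookkeeping. You take $B=\mathrm{acl}(Mbc)$ with $c\in\mathrm{acl}(Mab)\setminus(\mathrm{acl}(Ma)\cup\mathrm{acl}(Mb))$ and need $a\ind^{T}_{M}B$, whose $T$-part is $a\ind_{M}bc$. But this can never hold: if $a\ind_{M}bc$ then by monotonicity and symmetry $c\ind_{Mb}a$, and since $c\in\mathrm{acl}(Mab)$ this forces $c\in\mathrm{acl}(Mb)$, contradicting the choice of $c$. So the very element $c$ that you want to witness $\mathrm{acl}(M'a)/E\cap B/E\supsetneq M'/E$ at the larger base, by literally lying in both $\mathrm{acl}(M'a)$ and $B$, already destroys the independence $a\ind_{M}B$ at the smaller base. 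You flag this tension (``I'd need to be a little careful\dots''), but no choice of $a,b,c$ can repair it as long as $c$ itself is placed in $B$: the failure over $M'$ is being produced by an actual algebraic dependence, which is base-monotone in the stable reduct and hence also kills independence over $M$.

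The paper's proof avoids this by never putting $c$ into $B$. After arranging $M\prec M'$ with $b\in M'$, $M'\ind_{M}a$ and $c\in\mathrm{acl}(aM')\setminus(\mathrm{acl}(aM)\cup M')$, it introduces a fresh element $d$ with $d\ind_{M}M'a$, sets $B=\mathrm{acl}(dM')$, and uses the genericity of $E$ to make $c$ and $d$ alone into a common $E$-class of size two, all other new elements being $E$-isolated (fiber-algebraicity of $\neg R$ only constrains classes with at least three distinct elements, so this is a substructure of a model of $T^{R}$). Then $a\ind_{M}dM'$ in $T$ and $\mathrm{acl}(aM)/E\cap\mathrm{acl}(dM')/E=M/E$, so $a\ind^{T}_{M}dM'$; yet the class of $c$ meets both $\mathrm{acl}(aM')$ and $\mathrm{acl}(dM')$ outside $M'/E$, so $a\nind^{T}_{M'}dM'$, and base monotonicity fails. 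The idea you are missing is precisely this: the $E$-quotient intersection over the larger base is made to grow by a purely generic $E$-edge between two algebraically unrelated elements ($c$ and $d$), not by an element common to both algebraic closures. Your overall frame (failure of base monotonicity for $\ind^{T}$, then Fact \ref{KP}(2) with Corollary \ref{classif}) agrees with the paper, but without this device the construction you describe cannot be completed.
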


    \begin{proof}
       Let $\ind$ be forking-independence in $T$. First of all, let us choose $M \ind_{A} ab$ so that $M$ can be expanded to a model of $T^{R}$. By properties of forking in weakly minimal theories, we still have $\mathrm{acl}(Mab) \neq \mathrm{acl}(Ma) \cup \mathrm{acl}(Mb)$, so we can assume $M = A$ can be expanded to a model of $T^{R}$. 
       
       Now choose $M' \ind_{M} a$ containing $b$ so that $M'$ can be expanded to a model of $T^{R}$ elementarily extending $M$. Choose some $c \in \mathrm{acl}(Mab) \backslash \mathrm{acl}(Ma) \cup \mathrm{acl}(Mb)$. Again by the properties of forking, $c \in \mathrm{acl}(aM') \backslash \mathrm{acl}(aM) \cup M'$.

       Now choose $d \ind_{M} M'a$. So $a \ind_{M} M'd$ by properties of forking.

       Choose some $M'' \supseteq \mathrm{acl}(adM')$ with $M'' \models T$. Expand $M''$ to an $\mathcal{L}^{E}$-structure by defining $E$ as follows:

       \begin{itemize}
           \item On $M'$, $E$ is defined so that $(M', E(M'))$ is a model of $T^{R}$ elementarily extending $M$

           \item All elements of $M'' \backslash M'$ are isolated in their own $E$-class, except for $a, d$, which are in their own class of size $2$.
       \end{itemize}

       Since there are no $E$-equivalence classes with three distinct elements that are not entirely inside $M'$, which is a model of $T^{R}$, $M''$ is a model of $T_{R}$. So it can be identified with a substructure of a model of $T^{R}$.

       Now $a \ind_{M} dM'$ and $\mathrm{acl}(aM)/E \cap \mathrm{acl}(dM')/E = M/E$ by construction. So $a \ind^{T}_{M} dM'$. On the other hand $c/E \in (\mathrm{acl}(aM')/E \cap \mathrm{acl}(dM')/E) \backslash M'/E$. So  $a \nind^{T}_{M'} dM'$ and $\ind^{T}$ is not base monotone. So by Fact \ref{KP}.2, $T^{R}$ is not simple.

    \end{proof}

    In fact, \textit{in the equivalence relation version}, when $T^{R}$ is simple, so $T$ does not satisfy the hypothesis of this proposition (is geometrically trivial), $\ind^{E}$ must be stationary. So \textit{in the equivalence relation version}, if $T^{R}$ is simple, $T^{R}$ is stable.

\section{The group configuration theorem}

We now prove the main result of this paper, \ref{main}. \textit{Throughout this section, we assume the hypotheses of this theorem}: $T$ is weakly minimal theory with quantifier elimination, $R$ is a ternary relation definable in $T$, and $\neg R$ is fiber-algebraic. So because $T$ is $\mathrm{nfcp}$, Proposition \ref{mc} says that $T^{R}$ exists.

We first state the basic amalgamation property for algebraically closed sets in stable theories. This is just the classical independence theorem in the model companion of theories with a generic predicate or equivalence relation.

\begin{fact} (\cite{CP98}, \cite{KR18}).
    Let $T$ be a stable theory with quantifier elimination in the language $\mathcal{L}$, and let $\mathcal{L}^{*}$ be an expansion of $\mathcal{L}$ by a predicate symbol $P$ or a binary relation symbol $E$. For $M \models T$ and $1 \leq i \neq j \neq k \leq 3$, $j < k$, let $p_{i}(X_{j}, X_{k}, X_{jk})$ be quantifier-free $\mathcal{L}^{*}(M)$-types over $M$ consistent with $\mathrm{diag}_{T}(M)$, so that for $A'_{i}, A'_{j}, A'_{jk} \models p_{i}(X_{j}, X_{k}, X_{jk})$, $A'_{i}, A'_{j}, A'_{jk}$ are algebraically closed sets in the sense of $\mathcal{L}$, $A'_{jk} = \mathrm{acl}_{\mathcal{L}}(A'_{j}A'_{k}) \backslash (A'_{j} \cup A'_{k})$, $A'_{j} \ind_{M} A'_{k}$ (forking-independence in the sense of $T$), and if the expansion is by $E$, $A'_{j}/E \cap A'_{k}/E = M/E$. Assume compatibility of these pairs: for $1 \leq i \neq j \neq k \leq 3$, $p_i|_{X_{j}} = p_{j}|_{X_{k}}$. Then in a monster model of $T$, there are forking-independent $A_{1}, A_{2}, A_{3}$ over $M$, and there is an interpretation of $P$ or $E$ on $\mathrm{acl}_{\mathcal{L}}(A_{1}A_{2}A_{3})$ so that for $1 \leq i \neq j \neq k \leq 3$, $j < k$, $\mathrm{acl}_{\mathcal{L}}(A_{j}A_{k}) \models p_{i}$, and moreover $A_{1}/E \cap \mathrm{acl}_{\mathcal{L}}(A_{2}A_{3})/E = M/E$ when the expansion is by $E$.
\end{fact}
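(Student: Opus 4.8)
The plan is to prove this in two stages: first realize the compatible system of quantifier-free types purely inside a monster model of the stable theory $T$, and then equip the resulting $\mathcal{L}$-configuration with an $\mathcal{L}^*$-structure by a free amalgamation argument. Only the second stage, and only its equivalence-relation version, requires real work.

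For the first stage, write $p|_{X_i}$ for the single-variable restriction prescribed for $A_i$, which is well-defined by compatibility and, since $T$ is stable, is stationary over the model $M$. I would realize $A_1 \models p|_{X_1}$; since $p_3$ prescribes $A_1 \ind_M A_2$, the $\mathcal{L}$-type it assigns to $(A_1, A_2)$ is the unique nonforking extension of $p|_{X_2}$ to $A_1$, which can therefore be realized by some $A_2$. Finally the $\mathcal{L}$-types that $p_2$ prescribes over $A_1$ and that $p_1$ prescribes over $A_2$ are nonforking extensions of $p|_{X_3}$ agreeing over $M$, so by the independence theorem over the model $M$ in the simple theory $T$ (the relevant instance is Fact \ref{KP}, recalling that forking- and Kim-independence coincide in simple theories) there is $A_3$, forking-independent from $A_1A_2$ over $M$, realizing both. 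Then $A_1, A_2, A_3$ are forking-independent over $M$; and since $T$ has quantifier elimination and algebraic closure is canonical, the tuple $(A_j, A_k, \mathrm{acl}_{\mathcal{L}}(A_jA_k) \setminus (A_j \cup A_k))$ automatically realizes the $\mathcal{L}$-reduct of $p_i$.

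For the second stage, the structural input is the standard stable fact that forking-independence of $A_1, A_2, A_3$ over the model $M$ forces $\mathrm{acl}_{\mathcal{L}}(A_iA_j) \cap \mathrm{acl}_{\mathcal{L}}(A_iA_k) = A_i$ and $A_i \cap \mathrm{acl}_{\mathcal{L}}(A_jA_k) = M$; so the three pieces $\mathrm{acl}_{\mathcal{L}}(A_jA_k)$ pairwise overlap only inside the $A_i$, where by compatibility the prescribed $\mathcal{L}^*$-structures agree. In the predicate version I would take $P$ on $\mathrm{acl}_{\mathcal{L}}(A_1A_2A_3)$ to be the union of the three prescribed predicates, with every element lying in none of the three pieces kept out of $P$; the overlap description makes this well-defined and each pair visibly realizes $p_i$. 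In the equivalence-relation version I would take $E$ to be the transitive closure of the union of the three prescribed equivalence relations, with every element outside the three pieces isolated in its own class.

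The main obstacle is the verification, for the equivalence-relation version, that this transitive closure still restricts on each piece $\mathrm{acl}_{\mathcal{L}}(A_jA_k)$ to the prescribed $E$ — closing up transitively could a priori merge two classes that $p_i$ keeps apart — and that $A_1/E \cap \mathrm{acl}_{\mathcal{L}}(A_2A_3)/E = M/E$. This is exactly where the freeness clause ``$A'_j/E \cap A'_k/E = M/E$'' built into each $p_i$ is needed: it forces any prescribed $E$-class meeting two of the sets $A_i$ to also meet $M$, so an $E$-path, which can only move from one piece to another at a point of some $A_i$, can be rerouted through $M$ at every piece-switch. I expect to make this precise by induction on the length of an $E$-path, contracting each maximal subpath that stays inside a single piece via transitivity there, and replacing each piece-switch at an $A_i$-point by a detour through $M$; this simultaneously shows that the restriction to each piece is the prescribed one and that any element of $A_1$ that is $E$-linked into $\mathrm{acl}_{\mathcal{L}}(A_2A_3)$ is already $E$-linked to $M$. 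Once this is established, the remaining checks — that each $\mathrm{acl}_{\mathcal{L}}(A_jA_k)$ realizes $p_i$ and that the stated freeness clause holds — are immediate.
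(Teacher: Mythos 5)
Your construction is correct, but it proves the Fact directly rather than the way the paper does: the paper simply cites Theorem 2.7 of \cite{CP98} for the predicate version and Theorem 4.5 of \cite{KR18} for the equivalence-relation version (coding $E$ as a unary function to a new sort), so its ``proof'' is an appeal to the independence theorem over models in those generic expansions. Your route instead realizes the $\mathcal{L}$-configuration by stationarity over the model $M$ (your appeal to the independence theorem of Fact \ref{KP} is correct but overkill in the stable case: the unique nonforking extension of $p|_{X_3}$ to $A_1A_2$ already restricts to the prescribed types over $A_1$ and over $A_2$), and then amalgamates the $\mathcal{L}^*$-structure by hand, using $\mathrm{acl}(A_iA_j)\cap\mathrm{acl}(A_iA_k)=A_i$ and $A_i\cap\mathrm{acl}(A_jA_k)=M$. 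What your approach buys is transparency: it makes visible exactly where the hypothesis $A'_j/E\cap A'_k/E=M/E$ is used (it is what prevents the transitive closure from collapsing prescribed classes), and it avoids both the external citations and the function-to-a-new-sort coding; what the citation route buys is that the combinatorics is already done, in simple/$\mathrm{NSOP}_1$ generality. One refinement to your sketch: it is not true that \emph{every} piece-switch can be rerouted through $M$ --- if an excursion into a piece $\mathrm{acl}(A_jA_k)$ enters and exits through the \emph{same} set $A_j$, its class need not meet $A_k$ and the freeness clause gives no detour; but there no detour is needed, since compatibility of the prescribed relations on $A_j$ lets you transfer that subpath into the adjacent piece and delete the switch. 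The $M$-detour is available (and is exactly what is needed) precisely when an excursion meets both coordinate sets of a piece; with that case split, your induction on the number of piece-switches closes, and both the restriction claim and the clause $A_1/E\cap\mathrm{acl}(A_2A_3)/E=M/E$ follow.
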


\begin{proof}
    In the expansion by $P$, this is just the content of 2.7 of \cite{CP98} (which was proven for simple theories). In the expansion by $E$, this is the content of Theorem 4.5 of \cite{KR18} (which was proven for $\mathrm{NSOP}_{1}$ theories), where $L'$ in the statement of that theorem is taken to be $L$ together with a symbol for a unary function to a new sort.
\end{proof}

\begin{lemma}
    In the previous fact, if the expansion is by $P$, the interpretation of $P$ can be chosen to contain no points of $\mathrm{acl}_{\mathcal{L}}(A_{1}A_{2}A_{3}) \backslash \bigcup_{1 \leq j < k \leq 3}\mathrm{acl}_{\mathcal{L}}(A_{j}A_{k})$. If the expansion is by $E$, each point of that set can be assumed isolated in its own $E$-class.
\end{lemma}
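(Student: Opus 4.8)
The plan is to take the amalgam $A_1,A_2,A_3$ together with the interpretation of $P$ (resp.\ $E$) furnished by the previous fact and simply modify that interpretation on the ``new'' part $N := \mathrm{acl}_{\mathcal{L}}(A_1A_2A_3)\setminus U$, where $U := \bigcup_{1\le j<k\le 3}\mathrm{acl}_{\mathcal{L}}(A_jA_k)$, checking afterward that none of the properties asserted in the fact is disturbed. The one structural observation underlying everything is that each pairwise algebraic closure, and also $A_1$ and $\mathrm{acl}_{\mathcal{L}}(A_2A_3)$, is already contained in $U$: indeed $A_1\subseteq\mathrm{acl}_{\mathcal{L}}(A_1A_2)\subseteq U$, $\mathrm{acl}_{\mathcal{L}}(A_2A_3)\subseteq U$, and $\mathrm{acl}_{\mathcal{L}}(A_1A_2A_3)=U\sqcup N$. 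So every constraint appearing in the fact only ever refers to the interpretation of $P$ or $E$ restricted to $U$.

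For the predicate version I would replace the given interpretation $P$ by $P' := P\cap U$. Since $\mathrm{acl}_{\mathcal{L}}(A_jA_k)\subseteq U$, we have $P'\cap\mathrm{acl}_{\mathcal{L}}(A_jA_k)=P\cap\mathrm{acl}_{\mathcal{L}}(A_jA_k)$, so the quantifier-free $\mathcal{L}^{*}$-type realized by $\mathrm{acl}_{\mathcal{L}}(A_jA_k)$ is unchanged and is still $p_i$; and $P'\cap N=\emptyset$ by construction, which is exactly the assertion.

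For the equivalence relation version I would replace the given equivalence relation $E$ by the relation $E'$ whose classes are the $E$-classes intersected with $U$ together with one singleton class $\{a\}$ for each $a\in N$; equivalently $E' = (E\cap(U\times U))\cup\{(a,a):a\in N\}$. This is again an equivalence relation on $\mathrm{acl}_{\mathcal{L}}(A_1A_2A_3)$, being the disjoint union of a partition of $U$ with the discrete partition of $N$. Because $E'$ and $E$ agree on $U\times U$ and every $\mathrm{acl}_{\mathcal{L}}(A_jA_k)$ lies in $U$, the restriction of $E'$ to $\mathrm{acl}_{\mathcal{L}}(A_jA_k)$ equals that of $E$, so $\mathrm{acl}_{\mathcal{L}}(A_jA_k)\models p_i$ still holds. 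For the same reason the $E'$-class of any element of $A_1\cup\mathrm{acl}_{\mathcal{L}}(A_2A_3)$ is just its $E$-class inside $U$, whence $A_1/E'\cap\mathrm{acl}_{\mathcal{L}}(A_2A_3)/E' = A_1/E\cap\mathrm{acl}_{\mathcal{L}}(A_2A_3)/E = M/E = M/E'$, preserving the last clause of the fact; and each $a\in N$ is isolated in its own $E'$-class by construction.

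I do not expect a real obstacle here: once one notes that all the relevant sets sit inside $U$, the conditions of the fact are insensitive to severing $N$ into singleton classes (or deleting $N$ from $P$), and the only point worth stating carefully is that the modified relation is still an equivalence relation, which is immediate from the ``disjoint partition'' description. As an alternative one could instead verify by inspection that the constructions in \cite{CP98}, Proposition 2.7, and \cite{KR18}, Theorem 4.5, already produce such an interpretation, but the post-hoc modification above avoids revisiting those proofs.
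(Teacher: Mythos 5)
Your proposal is correct and is essentially the paper's own argument: the paper likewise just modifies the interpretation of $P$ or $E$ outside $\bigcup_{1 \leq j < k \leq 3}\mathrm{acl}_{\mathcal{L}}(A_{j}A_{k})$ and observes that this changes neither $\mathrm{acl}_{\mathcal{L}}(A_{j}A_{k}) \models p_{i}$ nor $A_{1}/E \cap \mathrm{acl}_{\mathcal{L}}(A_{2}A_{3})/E = M/E$, since all of these conditions only see the restriction to the pairwise algebraic closures. Your explicit definitions of $P' = P \cap U$ and $E' = (E \cap (U \times U)) \cup \{(a,a) : a \in N\}$ just spell out the same modification in more detail.
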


\begin{proof}
    If we change the interpretation of $P$ or $E$ outside of $\mathrm{acl}_{\mathcal{L}}(A_{j}A_{k})$, so that this requirement is met, this does not change the fact that $\mathrm{acl}_{\mathcal{L}}(A_{j}A_{k}) \models p_{i}$, nor that $A_{1}/E \cap \mathrm{acl}_{\mathcal{L}}(A_{2}A_{3})/E = M/E$.

    (Note that in \cite{CP98}, this is part of the proof of the previous fact, while in \cite{KR17}, it is stated in the proof that the map can indeed be defined arbitrarily outside of $\bigcup_{1 \leq j < k \leq 3}\mathrm{acl}_{\mathcal{L}}(A_{j}A_{k})$.)
\end{proof}

We first prove the following lemma, reducing the classification-theoretic properties of the expansion $T^{R}$ to the structure of the original weakly minimal theory $T$. As usual in a stability-theoretic context, independence, denoted $\ind$, is forking-independence in the sense of $T$.

\begin{lemma} \label{reduction2}
The theory $T^{R}$ is $\mathrm{SOP}_{3}$ if and only if in $T$, there are algebraically closed $A \subseteq A_{1}, A_{2}, A_{3}$, the $A_{i}$ independent over $A$, and $a_1, a_{2}, a_{3}$ with $\models \neg R(a_{1}, a_{2}, a_{3}) $, so that for $1 \leq i, j, k \leq 3$ distinct, $a_{i} \in \mathrm{acl}(A_{j}A_{k}) \backslash (A_{i} \cup A_{j})$
\end{lemma}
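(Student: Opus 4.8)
### Proof proposal for Lemma 4.7 (the $\mathrm{SOP}_3$ criterion)

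The plan is to use Lemma \ref{reduction1}, which reduces $\mathrm{SOP}_3$ of $T^R$ to the failure of the independence theorem for $\ind^T$, and then to translate that failure into the concrete ``bad configuration'' described in the statement. So I would prove the lemma in two implications, both passing through the independence theorem for $\ind^T$.

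\textbf{From a bad configuration to failure of the independence theorem.} Suppose we are given algebraically closed $A \subseteq A_1, A_2, A_3$ with the $A_i$ independent over $A$, and a point $\bar a = (a_1, a_2, a_3)$ with $\models \neg R(\bar a)$ and $a_i \in \mathrm{acl}(A_jA_k) \setminus (A_i \cup A_j)$ for distinct $i,j,k$. After moving to a model $M$ with $M \ind_A A_1A_2A_3$ and re-basing (as in the proof of the last Proposition of Section 3, using that forking in weakly minimal theories behaves well under such base extensions, and that $M$ can be chosen to expand to a model of $T^R$), assume $A = M \models T$. The idea is that the three pairs $B_{jk} := \mathrm{acl}_{\mathcal L}(A_jA_k)$ are pairwise freely amalgamable over $M$ in the sense of $\ind^T$ — the $A_j$ are $\ind$-independent over $M$, and one arranges the $\mathcal L^*$-structure (predicate empty off the $B_{jk}$'s, or each outside point isolated in its own $E$-class, using the Lemma after the amalgamation Fact) so that the equivalence-relation intersection condition holds. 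But the independence theorem would then have to produce an amalgam in which $a_1$ (lying in $B_{23}$), $a_2$ (in $B_{13}$), $a_3$ (in $B_{12}$) all appear, hence a triple in a common $E$-class (or in $P$) with $\models \neg R(\bar a)$ — contradicting the defining universal axiom of $T_R$. One has to be slightly careful: the $a_i$ are not elements of $M$ and the three ``types'' must be set up to be compatible, which is where the $a_i \notin A_i \cup A_j$ hypothesis is used — it guarantees each $a_i$ is genuinely new relative to the coordinate sets it must be glued along, so the prescribed quantifier-free $\mathcal L^*$-types are consistent with putting $a_1, a_2, a_3$ in a single class. Conclude that $\ind^T$ fails the independence theorem, hence $T^R$ is $\mathrm{SOP}_3$ by Lemma \ref{reduction1}.

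\textbf{From failure of the independence theorem to a bad configuration.} Conversely, suppose $\ind^T$ fails the independence theorem over some $M \models T^R$: there are $a_1 \ind^T_M b_1$, $a_2 \ind^T_M b_2$, $b_1 \ind^T_M b_2$, $a_1 \equiv^{\mathcal L^*}_M a_2$, with no common amalgam. By quantifier elimination up to finite covers (Proposition \ref{qe2}) we may pass to the algebraically closed hulls and work with the underlying $\mathcal L$-structure, where $\ind^T$ restricts to honest forking-independence in the weakly minimal $T$ together with the $E$-intersection (resp. $P$-) bookkeeping. Since $T$ itself \emph{is} stable and has the independence theorem over models, the obstruction must come entirely from the expansion: amalgamating $a_1a_2b_1b_2$ freely over $M$ in $T$ and then attempting to impose the $\mathcal L^*$-structure forces some $E$-class (resp. the predicate $P$) to contain a triple on which $R$ fails. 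Unwinding which elements are forced together — one coordinate coming from $\mathrm{acl}(a_1b_1b_2)$-type data, etc. — yields algebraically closed $A := M \subseteq A_1, A_2, A_3 := \mathrm{acl}(Mb_1b_2), \mathrm{acl}(Ma_1b_2)\text{-type sets}, \ldots$, independent over $A$, and a triple $(a_1', a_2', a_3')$ with $\models \neg R$ and each coordinate algebraic over two of the $A_i$'s but in neither; the genericity/non-containment conditions $a_i' \notin A_i \cup A_j$ hold precisely because otherwise the amalgam \emph{would} have existed (the offending triple would already have been legislated by the given quantifier-free types, contradicting the setup, or conversely would impose no new constraint).

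\textbf{Main obstacle.} The delicate point, in both directions, is the bookkeeping of \emph{which} algebraically closed sets play the roles $A_1, A_2, A_3$ and verifying the non-containment clauses $a_i \in \mathrm{acl}(A_jA_k) \setminus (A_i \cup A_j)$ exactly match the requirement that the three prescribed quantifier-free $\mathcal L^*$-types be mutually compatible yet jointly unrealizable — i.e., that the combinatorics of the failed independence theorem is genuinely equivalent to the combinatorics of a $\neg R$-triple spread across a triangle of pairwise-independent closed sets. I expect the $\Leftarrow$ direction (producing the configuration from the abstract failure) to be the harder half, since it requires reconstructing the geometric picture from a purely amalgamation-theoretic hypothesis and carefully checking the ``in neither $A_i$ nor $A_j$'' genericity, using the fiber-algebraicity of $\neg R$ (each coordinate algebraic over the other two) to pin down exactly where the three problematic points must live.
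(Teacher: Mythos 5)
Your overall route is the paper's: both directions go through Lemma \ref{reduction1}, translating $\mathrm{SOP}_{3}$ of $T^{R}$ into failure of the independence theorem for $\ind^{T}$, and the direction from that failure to the configuration is handled, as in the paper, by amalgamating the three pairwise $\mathcal{L}^{*}$-structures over a forking-independent triple (with $P$ empty, resp.\ classes isolated, off the pairwise closures) and locating the violating triple; your sketch of that half is vague on the case analysis showing each $a_{i}$ lands in exactly one $\mathrm{acl}(A_{j}A_{k})$ and outside $A_{i}\cup A_{j}$ (which uses independence plus fiber-algebraicity of $\neg R$), but the approach is right.

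The genuine gap is in your first implication (configuration $\Rightarrow$ failure of the independence theorem). You assert that a joint realization of the three pair-types ``would have to produce an amalgam in which $a_{1}, a_{2}, a_{3}$ all appear.'' It does not: the types $p_{k}(X_{i},X_{j})$ only record the quantifier-free $\mathcal{L}^{E}$- (or $\mathcal{L}^{P}$-) type of $\mathrm{acl}(A_{i}A_{j})$, so a joint realization by an independent triple yields (after using stationarity over the model $M$ to identify the $\mathcal{L}$-type of the triple with that of $A_{1}A_{2}A_{3}$) only elements $a_{i}'$ with $a_{i}' \equiv^{\mathcal{L}}_{A_{j}A_{k}} a_{i}$ lying in the class of $m$ (or in $P$); these are conjugates of the $a_{i}$ over their respective pairs, and nothing you say shows $\models \neg R(a_{1}',a_{2}',a_{3}')$, since $R$ relates elements drawn from three different pairwise closures and preserving each type over its own pair does not preserve the relation across the triple. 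This is exactly where the paper works: it proves that $\mathrm{tp}_{\mathcal{L}}(A_{1}A_{2}A_{3}/M)$ together with the isolating formulas $\varphi_{i}(X_{j},X_{k},x_{i})$ of the algebraic types $\mathrm{tp}_{\mathcal{L}}(a_{i}/A_{j}A_{k})$ entails $\neg R(x_{1},x_{2},x_{3})$, by showing that compatible automorphisms of the $\mathrm{acl}(A_{j}A_{k})$ over $A_{j}A_{k}$ extend to a single automorphism of the monster, which in turn rests on the identity $\mathrm{dcl}(\mathrm{acl}(A_{i}A_{j})\,\mathrm{acl}(A_{i}A_{k})) \cap \mathrm{acl}(A_{j}A_{k})=\mathrm{dcl}(A_{j}A_{k})$ extracted from the proof (not just the statement) of Theorem 3.7 of \cite{CP98}. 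Without an argument of this kind your contradiction with the axioms of $T_{R}$ does not materialize; and, contrary to your closing assessment, this is the delicate half of the lemma, not the other one.
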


\begin{proof}
       ($\Rightarrow$) First suppose $T^{R}$ is $\mathrm{SOP}_{3}$. So $\ind^{T}$, by Lemma \ref{reduction1}, does not satisfy the independence theorem. So for $1 \leq i, j, k \leq 3$ distinct, $j < k$, there are compatible types $p_{i}(X_{j}, X_{k})$ in $T^{R}$ of $\ind^{T}$-independent pairs over some $M \models T^{R}$ that do not have a common realization by $A'_{1}, A'_{2}, A'_{3}$ with $A'_{1} \ind^{T} A'_{2}A'_{3}$. Let $M \models T$, $A_{1}, A_{2}, A_{3} \supseteq M$, together with an interpretation of $P$ or $E$ on $\mathrm{acl}(A_{1}A_{2}A_{3})$ be obtained as in the previous lemma, from the types corresponding to $p_{i}(X_{j}, X_{k})$ by quantifier elimination up to finite covers (Proposition \ref{qe2}). In the rest of this proof, let $i, j, k$ range over distinct $1 \leq i, j, k \leq 3 $
       
       Suppose that  $\mathrm{acl}(A_{1}A_{2}A_{3})$ together with this interpretation $P(\mathrm{acl}(A_{1}A_{2}A_{3}))$ or $E(\mathrm{acl}(A_{1}A_{2}A_{3}))$ of $P$ or $E$ satisfies $(T_{R})_\forall$ (is a substructure of a model of $T_{R}$). Then $A_{1}, A_{2}, A_{3}$ could be identified by a common realization $A'_{1}, A'_{2}, A'_{3}$ of the $p_{i}(X_{j}, X_{k})$ with $A'_{1} \ind^{T} A'_{2}A'_{3}$, a contradiction.  So $\mathrm{acl}(A_{1}A_{2}A_{3})$ together with the additional structure does not satisfy $(T_{R})_\forall$. 
       
       To witness this, there are distinct $a_{1}, a_{2}, a_{3}$ belonging to $P(\mathrm{acl}(A_{1}A_{2}A_{3}))$, or belonging to the same $E(\mathrm{acl}(A_{1}A_{2}A_{3}))$-equivalence class, so that $ \models \neg R(a_{1}, a_{2}, a_{3})$. Relabeling, it suffices to show that each of the three $a_{i}$ belongs to precisely one of the three $\mathrm{acl}(A_{j}A_{k})$. Because each of those pairs do satisfy the quantifier-free type of a model of $(T_{R})_{\forall}$, $a_{1}, a_{2}, a_{3}$ cannot all belong to the same $\mathrm{acl}(A_{j}A_{k})$. Because on $\mathrm{acl}(A_{1}A_{2}A_{3}) \backslash \bigcup_{1 \leq j < k \leq 3}\mathrm{acl}(A_{j}A_{k})$, there are no points of $P$ or each point is isolated in its own $E$-class, none of $a_{1}, a_{2}, a_{3}$ belong to $\mathrm{acl}(A_{1}A_{2}A_{3}) \backslash \bigcup_{1 \leq j < k \leq 3}\mathrm{acl}(A_{j}A_{k})$. Finally, it remains to show that no two of $a_{1}, a_{2}, a_{3}$ can belong to $\mathrm{acl}(A_{i}A_{j})$, while a third belongs to a different $\mathrm{acl}(A_{i}A_{k})$ but not to $A_{i}$. Because in $T$, $A_{j} \ind_{A_{i}} A_{k} $, the third cannot be algebraic over the other two, as then it would belong to $\mathrm{acl}(A_{i}A_{j})\cap \mathrm{acl}(A_{i}A_{k})= A_{i}$. But then $\models R(a_{1}, a_{2}, a_{3})$ must hold, as one of the $a_{i}$ is not algebraic over the other two.

       ($\Leftarrow$) Now assume the second condition. By Lemma \ref{reduction1}, it suffices to show that the independence theorem fails for $\ind^{T}$. By taking some $M \models T$, that can be expanded to a model of $T^{R}$, independently from $A_{1}A_{2}A_{3}$ over $A$, and replacing $A_{i}$ with $\mathrm{acl}(MA_{i})$, we can assume $A=M$ is a model of $T$ that can be expanded to a model  $(M, E(M))$ or $(M, P(M))$ of $T^{R}$. In the equivalence relation case, fix some $m \in M$, and expand each of the $\mathrm{acl}(A_{i}A_{j})$ so that the additional structure extends that on $M$, and $a_{k}$ lies in the same equivalence class as $m$, while each point of $\mathrm{acl}(A_{i}A_{j}) \backslash M$ besides $a_{k}$ is isolated in its own class. In the predicate case, instead add no point of $\mathrm{acl}(A_{i}A_{j}) \backslash M$ to the intepretation of $P$, besides $a_{k}$. Because $a_{k}$ is not algebraic over $M$, either of these constructions give a model of $(T_{R})_{\forall}$. So these expansions of $\mathrm{acl}(A_{i}A_{j})$ determine, by the quantifier elimination up to finite covers in $T^{R}$, $\mathcal{L}^{E}$ or $\mathcal{L}^{P}$-types $p_{k}(X_{i}, X_{j})$ over the expansion of $M$ in $T^{R}$ for $i < j$. Because $a_{k} \notin A_{i}\cup A_{j}$, no nontrivial new structure was added to one of the $A_{i}$ in any pair, other than that on $M$. So these types agree on the $X_{i}$, by the quantifier elimination up to finite covers. And by construction, each is realized by a $\ind^{T}$-independent pair. So a failure of the independence theorem for $\ind^{T}$ would be implied, if we can show that these types cannot by jointly realized in $T^{R}$ by a triple that is forking-independent in the sense of $T$.

       We claim that an obstruction to this joint realization would occur if
       
       $$\mathrm{tp}_{\mathcal{L}}(A_{1}A_{2}A_{3}/M) \vdash \forall x_{1} x_{2} x_{3}\bigwedge_{j \neq i \neq k, j < k} \varphi_{i}(X_{j}, X_{k}, x_{i}) \to \neg R (x_{1}, x_{2}, x_{3})$$ for $\varphi_{i}(A_{j}, A_{k}, x_{i})$ a $\mathcal{L}$-formula isolating $\mathrm{tp}_{\mathcal{L}}(a_{i}/A_{j}A_{k})$. Indeed, a joint realization in $T^{R}$ of the $p_{k}(X_{i}, X_{j})$ that is a forking-independent triple in the sense of $T$ over $M$,  $A'_{1}, A'_{2}, A'_{3}$,  must satisfy $\mathrm{tp}_{\mathcal{L}}(A_{1}A_{2}A_{3}/M)$, by stationarity. Therefore, it must satisfy the formula on the right. But because the $A'_{1}, A'_{2}, A'_{3}$ jointly realize the $p_{k}(X_{i}, X_{j})$, for $i, j, k$ there must be some $a'_{i} \models \varphi_{i}(A'_{j}, A'_{k}, x_{i})$ belonging to the $E$-class of $m$, or to the interpretation of $P$. So $a'_{1}, a'_{2}, a'_{3}$ must all belong to the same equivalence class or to the interpretation of $P$. But by the formula on the right, $\models \neg R(a'_{1}, a'_{2}, a'_{3})$. This contradicts the axioms of $T_{R}$.

       So it remains to show that

       $$A_{1}A_{2}A_{3}\models \forall x_{1} x_{2} x_{3}\bigwedge_{j \neq i \neq k, j < k} \varphi_{i}(X_{j}, X_{k}, x_{i}) \to \neg R (x_{1}, x_{2}, x_{3})$$ For $i, j, k$, suppose $a''_{i}$ satisfies $\varphi_{i}(A_{j}, A_{k}, x_{i})$ and let $\sigma_{i}$ be an automorphism of $\mathrm{acl}(A_{j}A_{k})$ over $A_{j}A_{k}$ sending $a_{i}$ to $a''_{i}$. The independence theorem in $T^{A}$, Theorem 3.7 of \cite{CP98} does not say these automorphisms have a common extension--only that some conjugates of these automorphisms do. But the proof of that theorem does in fact show that compatible automorphisms of the algebraic closures of pairs in an independent triple, indeed have a common extension. Since this is not stated explicitly, we review the proof of everything we need; we work in $T$. For our purposes, it suffices to show for each $i, j, k$ that $\sigma_{i}$ as above, so an automorphism of $\mathrm{acl}(A_{j}A_{k})$ over $A_{j}A_{k}$ with $\sigma_{i}(a_{i}) = a''_{i}$, can be chosen so that it extends to an automorphism $\tilde{\sigma}_{i}$ of the monster model $\mathbb{M}\models T$ that is the identity on $\mathrm{acl}(A_{i}A_{j})$ and $\mathrm{acl}(A_{i}A_{k})$. Indeed, then we can compose all three of the $\tilde{\sigma}_{i}$ together, to get an automorphism extending each of the $\sigma_{i}$.  Because $a_{i} \equiv^{\mathcal{L}}_{A_{j}A_{k}} a''_{i}$, we will get the desired automorphism $\tilde{\sigma}_{i}$ of $\mathbb{M}$ over $\mathrm{acl}(A_{i}A_{j}) \mathrm{acl}(A_{i}A_{k})$ with $\tilde{\sigma}_{i}(a_{i}) = a''_{i}$, as long as the orbit of $a_{i}$ over $A_{j}A_{k}$ is the same as that of $a_{i}$ over $\mathrm{acl}(A_{i}A_{j}) \mathrm{acl}(A_{i}A_{k})$. Now the latter orbit, in the sense of $T^{\mathrm{eq}}$, belongs to $\mathrm{dcl}(\mathrm{acl}(A_{i}A_{j}) \mathrm{acl}(A_{i}A_{k})) \cap \mathrm{acl}(A_{j}A_{k})$. Now recall the claim of Theorem 3.7 of \cite{CP98}, namely that $\mathrm{dcl}(\mathrm{acl}(AB) \mathrm{acl}(AC)) \cap \mathrm{acl}(BC)=\mathrm{dcl}(BC)$ for $A, B, C$ independent sets in a stable theory. This claim implies that $\mathrm{dcl}(\mathrm{acl}(A_{i}A_{j}) \mathrm{acl}(A_{i}A_{k})) \cap \mathrm{acl}(A_{j}A_{k})=\mathrm{dcl}(A_{j}A_{k})$. But since the orbit of $a_{i}$ over $\mathrm{acl}(A_{i}A_{j}) \mathrm{acl}(A_{i}A_{k})$  is then in  $\mathrm{dcl}(A_{j}A_{k})$, all of the conjugates of $a_{i}$ over $A_{j}A_{k}$ must belong to the orbit of $a_{i}$ over $\mathrm{acl}(A_{i}A_{j}) \mathrm{acl}(A_{i}A_{k})$, so the orbit of $a_{i}$ over $\mathrm{acl}(A_{i}A_{j}) \mathrm{acl}(A_{i}A_{k})$ must coincide with the orbit of $a_{i}$ over $A_{j}A_{k}$.

       So there is an automorphism $\sigma$ of $\mathbb{M}$ extending $\sigma_{1}, \sigma_{2}, \sigma_{3}$. So $\models \neg R(a_{1}, a_{2}, a_{3})$ implies $\models \neg R(\sigma(a_{1}), \sigma(a_{2}), \sigma(a_{3}))$, so $\models \neg R(\sigma_{1}(a_{1}), \sigma_{2}(a_{2}), \sigma_{3}(a_{3}))$, so  $\models \neg R(a''_{1}, a''_{2}, a''_{3})$.

\end{proof}

This is the main technical lemma required for the group configuration theorem. To prove it, we use properties of forking in weakly minimal theories throughout.

\begin{lemma}\label{reduction3}
In the previous lemma, we can further require that $U(A_{i}/A)=1$ for $1 \leq i \leq 3$, in the sense of $T$.
\end{lemma}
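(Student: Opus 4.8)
The plan is to take a configuration $A \subseteq A_1, A_2, A_3$, $a_1,a_2,a_3$ as produced by Lemma~\ref{reduction2} and, working throughout with forking-independence $\ind$ in the weakly minimal (hence superstable, $U$-rank additive, exchange-satisfying) theory $T$, to shrink the ranks $n_i := U(A_i/A)$ down to $1$ without destroying the configuration. First I would reduce to the case $n_i < \infty$: replacing each $A_i$ by the algebraic closure over $A$ of finite subsets of $A_i$ witnessing the three algebraicity statements $a_k \in \mathrm{acl}(A_iA_j)$ leaves a valid configuration (shrinking the $A_i$ preserves their independence over $A$, preserves $a_k \in \mathrm{acl}(A_iA_j)$, and preserves the non-degeneracies $a_k \notin A_i$, $a_k\notin A_j$), and now each $A_i$ is finitely generated over $A$, so $n_i < \infty$; one also notes $n_i \geq 1$, since $A_i = \mathrm{acl}(A)$ would force e.g. $a_k \in \mathrm{acl}(A_iA_j)=\mathrm{acl}(A_j)$, contradicting $a_k\notin A_j$. (Since the $a_i$ are single elements not in $\mathrm{acl}(A)$, weak minimality also gives $U(a_i/A)=1$.) Then one inducts on $n_1+n_2+n_3$.

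If all $n_i = 1$ we are done. If, say, $n_1\geq 2$, choose a basis $c_1,\dots,c_{n_1}$ of $A_1$ over $A$ (each $U(c_l/A)=1$, the $c_l$ independent over $A$, $A_1 = \mathrm{acl}(Ac_1\cdots c_{n_1})$) and pass to the configuration with base $A^+ := \mathrm{acl}(Ac_2\cdots c_{n_1})$, with $A_1$ unchanged, with $A_2, A_3$ replaced by $A_2^+ := \mathrm{acl}(A^+A_2)$ and $A_3^+ := \mathrm{acl}(A^+A_3)$, and with the same $a_1,a_2,a_3$. Using that $\{c_2,\dots,c_{n_1}\}\subseteq A_1$ is independent from $A_2A_3$ over $A$, a routine forking computation yields: $A^+ \subseteq A_1, A_2^+, A_3^+$; the three sets $A_1, A_2^+, A_3^+$ are independent over $A^+$; $U(A_1/A^+)=1$ while $U(A_2^+/A^+)=n_2$ and $U(A_3^+/A^+)=n_3$; and $a_k \in \mathrm{acl}(A_i^+A_j^+)$ together with $\neg R(a_1,a_2,a_3)$ still hold, since the relevant algebraically closed sets only got larger. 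Thus the total rank strictly drops, and induction finishes the proof \emph{provided} the new configuration still satisfies the non-degeneracy conditions $a_k \notin \mathrm{acl}(A_i^+)$.

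This last point is the crux, and it is where a careful choice of basis (and possibly of which $A_i$ to reduce first) is needed. Two of the relevant conditions come for free: since $a_1 \in \mathrm{acl}(A_2A_3)$ and the independence of $A_1,A_2,A_3$ over $A$ gives $A_1 \ind_{AA_2}\mathrm{acl}(A_2A_3)$ and $A_1\ind_{AA_3}\mathrm{acl}(A_2A_3)$, we get $a_1 \ind_{AA_2} \{c_2,\dots,c_{n_1}\}$ and $a_1\ind_{AA_3}\{c_2,\dots,c_{n_1}\}$, hence $a_1 \notin \mathrm{acl}(A_2^+)$ and $a_1\notin\mathrm{acl}(A_3^+)$ for any choice of basis. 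The remaining conditions $a_3\notin\mathrm{acl}(A_2^+)$ and $a_2\notin\mathrm{acl}(A_3^+)$ translate into the requirements $\{c_2,\dots,c_{n_1}\}\ind_{AA_2}a_3$ and $\{c_2,\dots,c_{n_1}\}\ind_{AA_3}a_2$. I would build $c_2,\dots,c_{n_1}$ one at a time: having chosen $c_{<l}$ with the required independences, the rank-$1$ elements of $A_1$ that are forbidden for $c_l$ --- those in $\mathrm{acl}(Ac_{<l})$, in $\mathrm{acl}(AA_2c_{<l}a_3)$, or in $\mathrm{acl}(AA_3c_{<l}a_2)$ --- lie in acl-closed subsets of $A_1$ whose $U$-rank over $A$ is strictly less than $n_1$ (the estimate uses $U(a_i/A)=1$ and additivity of $U$-rank), so their union does not cover the rank-$1$ elements of $A_1$: here one uses that this pregeometry is non-degenerate, which must hold since a Lemma~\ref{reduction2}-configuration cannot exist over a trivial pregeometry (some $a_k$ would be forced into $A_i$ or $A_j$), whence a rank-$2$ flat of $A_1$ over $A$ has infinitely many rank-$1$ subflats and a finite union of proper flats cannot exhaust it. Picking $c_l$ outside the forbidden flats, and then $c_1$ to complete a basis of $A_1/A$, gives a basis with the desired properties. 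The main obstacle I anticipate is exactly the \emph{simultaneous} satisfaction of $\{c_l\}\ind_{AA_2}a_3$ and $\{c_l\}\ind_{AA_3}a_2$: when the ``directions'' occupied inside $A_1$ by $a_2$ (over $AA_3$) and by $a_3$ (over $AA_2$) are distinct, the naive reduction of $A_1$ fails, and one is forced either to reduce a different $A_i$ first or to use the fiber-algebraicity of $\neg R$ (via the relation $\neg R(a_1,a_2,a_3)$ and $a_1\in\mathrm{acl}(a_2a_3)$) to align these directions before the dimension count applies.
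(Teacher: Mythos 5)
Your overall strategy is the same as the paper's (push all but one dimension of each $A_i$ into the base, handling the three sets one at a time, with the easy observations about $a_i$ itself and about the finitary reduction being fine), but the crux that you yourself flag --- guaranteeing simultaneously that $a_3\notin\mathrm{acl}(A_2^{+})$ and $a_2\notin\mathrm{acl}(A_3^{+})$ after enlarging the base --- is left genuinely unresolved, and your proposed fallback does not close it. The missing idea is the paper's key claim: fix a basis $b_1,\dots,b_n$ of $A_1$ over $A$ and let $S_3,S_2\subseteq\{b_1,\dots,b_n\}$ be the \emph{least} subsets with $a_3\in\mathrm{acl}(A_2S_3)$ and $a_2\in\mathrm{acl}(A_3S_2)$ (least subsets exist relative to a fixed independent set). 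Then $S_2\cap S_3\neq\emptyset$: if they were disjoint, then $A_2S_3\ind_A A_3S_2$ would give $a_3\ind_{\mathrm{acl}(A_2A_3)}a_2$; but fiber-algebraicity of $\neg R$ together with $\models\neg R(a_1,a_2,a_3)$ and $a_1\in\mathrm{acl}(A_2A_3)$ puts $a_3\in\mathrm{acl}(\mathrm{acl}(A_2A_3)\,a_2)$, while $a_3\notin\mathrm{acl}(A_2A_3)$ (since $a_3\in\mathrm{acl}(A_1A_2)\setminus A_2$ and $A_1\ind_{A_2}A_3$), a contradiction. One then keeps a single $b\in S_2\cap S_3$ and pushes the remaining basis elements into the base; minimality of $S_2,S_3$ is exactly what preserves both non-degeneracies at once. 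So the ``alignment of directions'' you speculate about is not an optional extra route: it is the content of the lemma, it always holds, and it is precisely where fiber-algebraicity of $\neg R$ enters.

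Your generic-choice/dimension-counting fallback also has an independent flaw: from non-triviality of the pregeometry you cannot conclude that a rank-two flat of $A_1$ over $A$ has infinitely many rank-one subflats. Weakly minimal theories include modular geometries over finite fields (e.g.\ $\mathbb{F}_2$-vector spaces, where the group configuration and hence a Lemma 4.2-configuration certainly occur), and there a line has only finitely many points, so ``a finite union of proper flats cannot exhaust'' is not available in the generality you need; whether your three specific forbidden flats can cover would require a separate argument. Since you do not supply that argument and explicitly concede the simultaneous-independence obstacle, the proof as proposed is incomplete; the intersection claim $S_2\cap S_3\neq\emptyset$ is the step you are missing.
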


\begin{proof}
Throughout this proof we refer to $T$ and use the notation of the previous lemma. It suffices to find some $A \subseteq D \subset A_{1} \cup A_{2} \cup A_{3}$ with $U(A_{i}/D)=1$ so that the second condition of that lemma is satisfied replacing each $A_{i}$ with $\mathrm{acl}(A_{i}D)$. We do this by handling $A_{1}$, $A_{2}$ and $A_{3}$ successively.

We begin with the following observation: relative to a given set, if $b_{1}, \ldots, b_{n}$ is an independent sequence and $a \in \mathrm{acl}(b_{1}, \ldots b_{n})$, there is some \textit{least} $S \subseteq \{b_{1}, \ldots, b_{n}\}$ so that $a \in \mathrm{acl}(S)$. Because if $S_{1}, S_{2} \in \mathrm{acl}(b_{1}, \ldots b_{n})$ are two \textit{minimal} such sets, then they are independent over $S_{1} \cap S_{2}$, so $a \in \mathrm{acl}(S_{1}) \cap \mathrm{acl}(S_{2})=\mathrm{acl}(S_{1} \cap S_{2})$, contradicting minimality.

Choose $1 \leq i, j, k \leq 3$ distinct. We can assume $A_{i} = \mathrm{acl}_{A}(b_{1}, \ldots, b_{n})$ for $b_{1}, \ldots, b_{n}$ independent over $A$. The $b_{1}, \ldots, b_{n}$ are thus independent over $A_{j}$ and $A_{k}$ since $A_{j} \ind_{M} A_{i}$, $A_{k} \ind_{M} A_{i}$. By the observation above, let $S_{j}, S_{k} \subseteq \{b_{1}, \ldots, b_{n}\}$ be respectively the least so that $a_{j} \in \mathrm{acl}(A_{k}S_{j})$ and $a_{k} \in \mathrm{acl}(A_{j}S_{k})$. We claim that $S_{j} \cap S_{k} \neq \emptyset$. Otherwise, as $A_{i}, A_{j}, A_{k}$ are independent over $M$, and $b_{1}, \ldots, b_{n}$ are an independent subset of $A_{k}$, $A_{k}S_{j} \ind_{A} A_{j}S_{k}$. Therefore, $a_{j} 
 \in \mathrm{acl}(A_{k}S_{j}) \ind_{A_{j}A_{k}} \mathrm{acl}(A_{j}S_{k}) \ni a_{k}$, so $a_{j} \ind_{A_{j}A_{k}} a_{k}$. Now $a_{j} \in \mathrm{acl}(A_{i}A_{k}) \backslash A_{k}$ and $A_{i} \ind_{A_{k}} A_{j}$, so $a_{j} \notin \mathrm{acl}(A_{j}A_{k})$. But because $\models \neg R(a_{1}, a_{2}, a_{3})$, $a_{j}$ is algebraic over $\mathrm{acl}(A_{j}A_{k})a_{k} \supseteq Aa_{i}a_{k}$. So $a_{j}$ and $a_{k}$ are dependent over $\mathrm{acl}(A_{j}A_{k}) $, contradicting $a_{j} \ind_{A_{j}A_{k}} a_{k}$. This proves our claim that $S_{j} \cap S_{k} \neq \emptyset$.

Now let $A'=\mathrm{acl}_A(\{b_{1}, \ldots b_{n}\} \backslash \{b\})$ for some $b \in S_{j} \cap S_{k}$. Then $U (A_{i}/A') =1$.  By choice of $S_{j}$ and $S_{k}$, $a_{k} \in \mathrm{acl}(A_{i}A_{j}) \backslash (A_{i} \cup \mathrm{acl}(A'A_{j}))$ and $a_{j} \in \mathrm{acl}(A_{i}A_{k}) \backslash (A_{i} \cup \mathrm{acl}(A'A_{k}))$. By the same reasoning used to show $a_{j} \notin \mathrm{acl}(A_{j}A_{k})$ above, $a_{i} \notin \mathrm{acl}(A_{i}A_{j}) \cup \mathrm{acl}(A_{i}A_{k})$, so $a_{i} \in \mathrm{acl}(\mathrm{acl}(A'A_{j}) \mathrm{acl}(A'A_{k})) \backslash (\mathrm{acl}(A'A_{j}) \cup \mathrm{acl}(A'A_{k}))$.

So replace $A$ with $A' \subseteq A_{i}$ $A_{j}$ with $\mathrm{acl}(A'A_{j})$ and $A_{k}$ with $\mathrm{acl}(A'A_{k})$. Now repeat what we have done for $A_{i}$ for each of $A_{j}$ and $A_{k}$.

\end{proof}

We are now in a position to prove Theorem \ref{main}. First, suppose $G$ is a rank-one connected group type-definable over a parameter set $A$ defining $R$, which we can assume to be algebraically closed. Let $(a, b, c)$ be a generic of the graph of its operation. Then $a$ and $b$ are independent generics of $G$ over $A$ and $c=ab$. Then (as in the construction of a group configuration from an actual group; see \cite{Bays15} for an overview) we can find independent algebraically closed sets $A_{1}, A_{2}, A_{3}$ containing $A$ with $a_{i} \in \mathrm{acl}(A_{j}A_{k}) \backslash (A_{i} \cup A_{j})$; just find independent generics $d_{1}, d_{2}, d_{3}$ of $G$ over $A$ so that $a=d_{1}d^{-1}_{2}$, $b=d_{2}d^{-1}_{3}$ and $c=d_{1}d^{-1}_{3}$. Now note that, by replacing $a_{1}$, $a_{2}$, and $a_{3}$ by elements individually interalgebraic over $A$, we preserve $a_{i} \in \mathrm{acl}(A_{j}A_{k}) \backslash (A_{i} \cup A_{j})$, so by Lemma \ref{reduction2}, $T^{R}$ is not simple.

In the other direction, suppose $T^{R}$ is not simple. Then we get $a_{i}$, $A_{i}$ as in Lemmas \ref{reduction2}, \ref{reduction3}. To summarize, we have $A_{1}, A_{2}, A_{3}$, $a_{1}, a_{2}, a_{3}$ of rank one over $A$, $a_{i} \in \mathrm{acl}(A_{j}A_{k}) \backslash (A_{i} \cup A_{j})$, $A_{1}, A_{2}, A_{3}$ forming an independent triple over $A$, and $a_{1}, a_{2}, a_{3}$ forming a \textit{dependent} triple over $A$, since $\models \neg R(a_{1},a_{2}, a_{3}) $.

Since they are all of rank one over $A$, we know from the properties of forking in weakly minimal theories that $a_{1}, a_{2}, a_{3}$ together with, for $i, j, k$ distinct, each of $a_{i}, A_{j}, A_{k}$, form the lines of a group configuration (recall Figure 1 above). The conclusion follows from the group configuration theorem, Theorem \ref{gct}.

\begin{example}
We given an example of a simple unstable theory $T$
of $\mathrm{SU}$-rank $1$ and ternary relation $R$ definable in $T$ satisfying the group condition of Theorem 1.3, but with $T^{R}$ still simple. Let $T_{0}$ be the theory of two-sorted structures consisting of a vector space $V$ over a finite field and a two-to-one map $F$ from a set $S$ to $V$, with a symmetric ternary relation $U$ relating, for any three distinct fibers of $F$, exactly one point in each of the fibers. It has a model companion $T$ which can be seen to be supersimple of $\mathrm{SU}$-rank $1$ with the evident quantifier elimination. Now let $R(x_{1}, x_{2}, x_{3})$ be defined on $S$ by $\neg (F(x_{1}) = F(x_{2}) + F(x_{3}) \wedge U(x_{1}, x_{2}, x_{3}))\wedge \bigwedge_{1 \leq i, j, \leq 3} \neg(F(x_{i}) = F(x_{j}))$. The independence theorem still holds in $T^{R}$, which exists (see the appendix), for the relation $a\ind_{M}^{a}b$ given by $\mathrm{acl}(aM)\cap\mathrm{acl}(bM)=M$.

However, there is a group configuration theorem for certain simple theories (\cite{BYTW02}), and the left-to-right direction of Lemma 4.1 as well as Lemma 4.2 only require $\mathrm{SU}$-rank one and not the additional assumption of stability, so when $T^{R}$ exists, but is not simple, we may still get a characterization of $R$ in terms of groups.
\end{example}

\appendix 
\section{Existence of $T^{R}$ for fiber algebraic ternary relations}

In Proposition \ref{mc}, we show that $T^{R}$ exists whenever $T$ is $\mathrm{nfcp}$. When $T$ only eliminates $\exists^{\infty}$, $T^{R}$ still exists in the case where $\neg R$ is an algebraic ternary relation. This may be useful for generalizing the results of this paper to case of $T$ unstable.

\begin{proposition}
 Let $T$ eliminate the quantifier $\exists^{\infty}$and let $R$ be the negation of an algebraic ternary relation definable in $T$. Then $T^{R}$ exists.   
\end{proposition}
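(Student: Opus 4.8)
The plan is to follow the skeleton of the proof of Proposition \ref{mc}, replacing the two uses of $\mathrm{nfcp}$ by, respectively, the hypothesis that $T$ eliminates $\exists^{\infty}$ and the fact that $\neg R$ is fiber-algebraic. As there, $T_{R}$ is consistent (isolate every element in its own $E$-class, resp.\ take $P$ empty), and since it is obtained from the model-complete theory $T$ by adjoining universal axioms, unions of chains of models of $T_{R}$ are again models of $T_{R}$, so existentially closed models exist; and by the Claim in the proof of Proposition \ref{mc} it suffices to produce, for each quantifier-free $\mathcal{L}^{P}$-formula (resp.\ $\mathcal{L}^{E}$-formula) $\varphi(\bar{y},\bar{x})$, an $\mathcal{L}^{P}$-formula (resp.\ $\mathcal{L}^{E}$-formula) $\rho_{\varphi}(\bar{y})$ such that $M\models\rho_{\varphi}(\bar{m})$ if and only if $\varphi(\bar{m},\bar{x})$ is realized in some extension $N\supseteq M$, $N\models T_{R}$, by a tuple $\bar{n}$ with $\bar{n}\cap M=\emptyset$.

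First I would put $\varphi$ in the normal form $\psi(\bar{y},\bar{x})\wedge\eta(\bar{y},\bar{x})$ of the proof of Proposition \ref{mc}, where $\psi$ is an $\mathcal{L}$-formula forcing $\bar{y}\bar{x}$ to have distinct coordinates and $\eta$ completely describes the $P$-membership (resp.\ the restriction of $E$) on the variables. The key point, exactly as in the proof of Lemma \ref{qe1}, is that fiber-algebraicity makes the $P$-/$E$-part of $\varphi$ essentially free: when one amalgamates a realization $\bar{n}$ of $\psi(\bar{m},\bar{x})$ disjoint from $M$ into $M$ and imposes on it the structure prescribed by $\eta$, every \emph{new} triple that must satisfy $R$ contains at least one coordinate from $\bar{n}$, and $R$ holds on such a triple as soon as that coordinate fails to be algebraic over the other two; moreover, since $\neg R$ is fiber-algebraic and $T$ eliminates $\exists^{\infty}$, the formula $\neg R(x,b,c)$ is \emph{uniformly} algebraic in $x$, so there is a single bound $N_{0}$ on all its fibers. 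Consequently $\varphi(\bar{m},\bar{x})$ is realizable in a $T_{R}$-extension of $M$ by a tuple disjoint from $M$ if and only if there is an $\mathcal{L}$-realization $\bar{n}$ of $\psi(\bar{m},\bar{x})$, with $\bar{n}\cap M=\emptyset$, on which $R$ holds for each of the finitely many triples that $\eta$ places together in $P$ (resp.\ in a common $E$-class) --- and all of these triples have their non-$\bar{x}$ coordinates among the parameters $\bar{m}$, so the whole condition is captured by a single $\mathcal{L}$-formula $\theta(\bar{y},\bar{x})$ together with the requirement $\bar{x}\cap M=\emptyset$.

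It then remains to see that ``there is a realization of $\theta(\bar{m},\bar{x})$ disjoint from $M$'' is an $\mathcal{L}$-definable property of $\bar{m}$, for $M$ a model of $T$; this is where elimination of $\exists^{\infty}$ enters. I would argue by induction on the length of $\bar{x}$, building $\bar{n}$ coordinate by coordinate in a monster model: having chosen $n_{1},\dots,n_{i-1}$ outside $M$, the residual $\mathcal{L}$-formula $\theta_{i}(\bar{m},n_{1},\dots,n_{i-1},x_{i})$ governing the next coordinate is either non-algebraic over $\bar{m}n_{1}\cdots n_{i-1}$ --- in which case it has more than $|M|$ solutions in a sufficiently saturated model, so some solution lies outside $M$ and no further constraint on $\bar{m}$ is imposed --- or algebraic over $\bar{m}n_{1}\cdots n_{i-1}$, in which case (after the reduction above) its at most $N_{0}$ solutions lie in $\mathrm{acl}(\bar{m}n_{1}\cdots n_{i-1})$, and whether a suitable one can be chosen outside $M$ without creating a $\neg R$-triple is a bounded condition which, by quantifier elimination in $T$, is determined by $\mathrm{tp}_{\mathcal{L}}(\bar{n}/\bar{m})$ and so contributes only an extra $\mathcal{L}$-formula in $\bar{y}$. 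Assembling the finitely many cases yields $\rho_{\varphi}$ in the predicate case; the equivalence relation case is identical except for the extra bookkeeping of the proof of Proposition \ref{mc}, where a new coordinate may instead join the $E$-class of some element of $M$, forcing $R$ on triples with one or two coordinates in $M$ --- again harmless by fiber-algebraicity.

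The main obstacle I anticipate is making the definability argument of the previous paragraph fully rigorous without the stronger hypothesis of $\mathrm{nfcp}$: in particular, controlling at the ``algebraic'' steps of the induction that a coordinate forced into $\mathrm{acl}(\bar{m}n_{1}\cdots n_{i-1})$ can still be taken outside $M$, and keeping this compatible with the residual $R$-conditions. The leverage for this is the uniform fiber bound $N_{0}$, which comes precisely from combining fiber-algebraicity of $\neg R$ with elimination of $\exists^{\infty}$: it confines all the relevant counting to bounded algebraic closures, so that --- for $M$ a model of $T$ --- the obstruction should reduce to $M$-independent non-algebraicity statements, definable under elimination of $\exists^{\infty}$ alone rather than requiring the full non-finite cover property used in Proposition \ref{mc}.
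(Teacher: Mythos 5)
There is a genuine gap at the step where you reduce realizability in a $T_{R}$-extension to a single $\mathcal{L}$-formula $\theta(\bar{y},\bar{x})$ plus the requirement $\bar{x}\cap M=\emptyset$. You claim that every triple which must be checked has its non-$\bar{x}$ coordinates among the parameters $\bar{m}$. That is false: after setting $P(N)=P(M)\cup\{n_{1},\dots,n_{k}\}$ (resp.\ merging new elements into the $E$-class of some $m_{j}$), one must also verify $R$ on triples with \emph{two} new coordinates $n_{i},n_{j}$ and a third coordinate ranging over all of $P(M)$ (resp.\ over the whole $E$-class of $m_{j}$ inside $M$), not just over $\bar{m}$. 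Fiber-algebraicity does not dispose of these: the bad third coordinates are exactly the finitely many solutions in $M$ of $\neg R(\sigma(n_{i},n_{j},x))$, and nothing forces them to avoid $P(M)$; in particular this is not decided by $\mathrm{tp}_{\mathcal{L}}(\bar{n}/\bar{m})$, so your closing inductive paragraph, which asserts the residual condition ``contributes only an extra $\mathcal{L}$-formula in $\bar{y}$,'' inherits the same problem. A concrete counterexample to your biconditional: let $T$ be an infinite $\mathbb{F}_{2}$-vector space, $\neg R(x,y,z)$ be ``$x+y+z=0$ and $x,y,z$ distinct,'' and $M\models T_{R}$ with $b_{1},b_{2}\notin P(M)$ but $b_{1}+b_{2}=c\in P(M)$. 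For $\varphi(y_{1}y_{2},x_{1}x_{2})=(x_{2}=x_{1}+y_{1}+y_{2})\wedge P(x_{1})\wedge P(x_{2})$, your $\theta$ (there being no triple among the designated-$P$ variables) is realized by new $n_{1},n_{2}\notin M$, yet no $T_{R}$-extension exists, since any such pair together with $c$ gives three distinct elements of $P$ summing to $0$. So realizability genuinely depends on the trace of $P$ (resp.\ $E$) on $M$ beyond $\bar{m}$, and $\rho_{\varphi}$ cannot be of the form you propose.

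This is precisely what the paper's proof is organized around, and it is the real point of the appendix: there $\rho(\bar{m})$ is an honest $\mathcal{L}^{P}$-formula which, for each pair $i\leq j\leq k$ and $\sigma\in S_{3}$, existentially quantifies over a bounded number of elements $e^{1}_{ij\sigma},\dots,e^{l_{ij\sigma}}_{ij\sigma}\in M\setminus P(M)$ required to exhaust the solutions in $M$ of $\neg R(\sigma(a_{i},a_{j},x))$, while the remaining $k_{ij\sigma}-l_{ij\sigma}$ solutions (the $f$'s) are required to lie outside $M\cup\bar{a}$; the uniform bound $k_{ij\sigma}$ comes from fiber-algebraicity, and the statement ``there exist such $\bar{a}$ and $f$'s in an elementary extension avoiding $M$'' is made $\mathcal{L}$-definable using elimination of $\exists^{\infty}$ via Lemma 2.3 of \cite{CP98} together with the standard fact that avoidance of $\mathrm{acl}(\bar{m}\bar{e})$ can be upgraded to avoidance of $M$. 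Your coordinate-by-coordinate argument for expressing disjointness from $M$ is essentially a re-proof of that last ingredient and is fine in spirit, but without the bookkeeping over $M\setminus P(M)$ (and its analogue for $E$-classes) the defining equivalence for $\rho_{\varphi}$ fails, as above. Note also that a repair via choosing $\bar{n}$ independent from $M$ over $\bar{m}$ is not available here: the proposition assumes only elimination of $\exists^{\infty}$, not stability, and even under stability the bad solutions may lie in $\mathrm{acl}(\bar{m})\cap P(M)$ without being among $\bar{m}$, as in the example.
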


\begin{proof}
(Predicate version) Let $M \models T_{R}$ and let $\psi(\overline{y}, x_{1}, \ldots, x_{n})$ be a formula of $\mathcal{L}_{0}$. As in the proof of Theorem 2.4 of \cite{CP98}, we can assume that $\psi(\overline{y}, x_{1}, \ldots, x_{n})$ implies that $x_{1}, \ldots, x_{n}$ are distinct, and it suffices to find, for any $1 \leq k \leq n$, some  $\mathcal{L}^{P}$-formula $\rho(\overline{y})$ independently of $M$ so that $M \models \rho(\overline{m})$ if and only if there is some $\overline{a} \in N$ for $N \models T_{R}$ an extension of $M$ such that $\overline{a}$ does not meet $M$ and such that $N \models \psi(\overline{m}, \overline{a}) \wedge \bigwedge_{1}^{k}P(a_{i}) \wedge \bigwedge_{k+1}^{n} \neg P(a_{i}) $; then the $\forall y (\rho(\overline{y}) \rightarrow \exists \overline{x} \psi(\overline{y}, \overline{x}))$ will still axiomatize when $M$ is existentially closed.

We can assume that for $1 \leq i \leq j \leq k$ and $\sigma \in S_{3}$, there is a constant $k_{ij\sigma}$ so that for any $\overline{m}$, $\psi(\overline{m}, \overline{a})$ implies that $\neg R(\sigma(a_{i}, a_{j}, x))$ has exactly $k_{ij\sigma}$ solutions, since every formula $\psi(\overline{y}, x_{1}, \ldots, x_{n})$ can be written as a disjunction of formulas with this property. Consider the condition $\rho(\overline{m})$ on $\overline{m} \in M$ requiring that for $1 \leq i \leq j \leq k$ and $\sigma \in S_{3}$ there are some $0 \leq l_{ij\sigma} \leq k_{ij\sigma}$ and $e_{ij\sigma}^{1}, \ldots, e_{ij\sigma}^{l_{ij\sigma}} \in M \backslash P(M)$, distinct for fixed $ij\sigma$ so that the following condition $\tau(\overline{m}, \overline{e})$ holds:

There is some $M \prec_{\mathcal{L}} N$ and $a_{1} \ldots a_{n} \in N \backslash M$ and, for $1 \leq i \leq j \leq k$ and $\sigma \in S_{3}$, $f^{1}_{ij\sigma} \ldots f^{k_{ij\sigma}-l_{ij\sigma}}_{ij\sigma} \in N\backslash M$, distinct for fixed $ij\sigma$ and distinct from all of the $a_{1}, \ldots, a_{k} $, so that $N \models \psi(\overline{m}, \overline{a})$, for all $1 \leq i_{1}, i_{2}, i_{3} \leq k $, $N \models R(a_{i_{1}}, a_{i_{2}}, a_{i_{3}})$, and for fixed $1 \leq i \leq j \leq k$ and $\sigma \in S_{3}$, $N \models \neg R(\sigma(a_{i}, a_{j}, a))$ for $a$ any of the $e_{ij\sigma}^{1}, \ldots, e_{ij\sigma}^{l_{ij\sigma}}$ or $f^{1}_{ij\sigma} \ldots f^{k_{ij\sigma}-l_{ij\sigma}}_{ij\sigma}$.

It follows from the following claim, used implicitly in \cite{CP98}, that $\tau(\overline{m}, \overline{e})$ is a definable condition in $\mathcal{L}$:

\begin{claim}
For $\varphi(\overline{x}, \overline{y})$ any $\mathcal{L}$-formula, the set of $\overline{a} \in M$ so that there is $\overline{b}$ in an elementary extension of $M$ not meeting $M$ with $\models \varphi(\overline{b}, \overline{a})$ is definable in $\mathcal{L}$
\end{claim}

\begin{proof}
By elimination of $\exists^{\infty}$, we can apply Lemma 2.3 of \cite{CP98}, which says that the set of $\overline{a} \in M$, so that there is $\overline{b}$ in an elementary extension of $M$ not meeting $\mathrm{acl}(a)$ with $\models \varphi(\overline{b}, \overline{a})$, is definable. But it is well-known that for any $a, b$ and $a \in M$ there is always $b' \equiv_{a} b$ with $\mathrm{acl}(b) \cap M = \mathrm{acl}(a)$. So the set we have defined is in fact our desired set.
\end{proof}

Because $\tau(\overline{m}, \overline{e})$ can be expressed definably in $\mathcal{L}$, $\rho(\overline{m})$ can be expressed definably in $\mathcal{L}^{P}$. We claim that $\rho(\overline{m})$ is as desired. First suppose there is some $\overline{a} \in N$ for $N \models T_{R}$ an extension of $M$ such that $\overline{a}$ does not meet $M$ and such that $N \models \psi(\overline{m}, \overline{a}) \wedge \bigwedge_{1}^{k}P(a_{i}) \wedge \bigwedge_{k+1}^{n} \neg P(a_{i}) $. Then we can let $e_{ij\sigma}^{1}, \ldots, e_{ij\sigma}^{l_{ij\sigma}}$ enumerate the solutions in $M$ to $\neg R(\sigma(a_{i}, a_{j}, x))$--note that they must belong to  $ M \backslash P(M)$--and let $f^{1}_{ij\sigma} \ldots f^{k_{ij\sigma}-l_{ij\sigma}}_{ij\sigma}$ enumerate the solutions in $N \backslash M$ to $\neg R(\sigma(a_{i}, a_{j}, x))$--note that they must be distinct from the $a_{1}, \ldots, a_{k}$. Now suppose $\rho(m)$ holds. It remains to expand $N$ to a model of $T_{R}$ extending $M$ so that $N \models \bigwedge_{1}^{k}P(a_{i}) \wedge \bigwedge_{k+1}^{n} \neg P(a_{i}) $. Note that the $a_{i}$ are distinct; add just the $a_{1}, \ldots, a_{k}$ to the domain of $P$, and no other new elements, to form $P(N)$. We must show that $(N, P(N)) \models T_{R}$; that is, for a triple $\overline{n} \in P(N)$, $N \models R(\overline{n})$. This is clearly the case if all of the coordinates over $\overline{n}$ lie in $N/M$, and also if they all lie in $M$, as we assume that for $1 \leq i_{1}, i_{2}, i_{3} \leq k $, $N \models R(a_{i_{1}}, a_{i_{2}}, a_{i_{3}})$. If two of the coordinates of $\overline{n}$ are $a_{i}$ and $a_{j}$ for $1 \leq i \leq j \leq k$, and another coordinate lies in $M$, then $N \models R(\overline{n})$ still holds as all $k_{ij\sigma}$ of the solutions to any of the $\neg R(\sigma(a_{i}, a_{j}, x))$ must either be one of the $f^{1}_{ij\sigma} \ldots f^{k_{ij\sigma}-l_{ij\sigma}}_{ij\sigma}$ that are not in $M\cup \overline{a}$ or one of the $e_{ij\sigma}^{1}, \ldots, e_{ij\sigma}^{l_{ij\sigma}}$ that are not in $P(N)$. Finally, if exactly two of the coordinates of $\overline{n}$ lie in $M$, then $N \models R(\overline{n})$ as the other coordinate cannot be algebraic over the two that belong to $M$.

(Equivalence relation version) Similar.

\end{proof}

\bibliographystyle{plain}
\bibliography{refs}

\begin{thebibliography}{10}

\bibitem{BB04}
Bektur Baizhanov and John~T. Baldwin.
\newblock Local homogeneity.
\newblock {\em The Journal of Symbolic Logic}, 69(4):1243--1260, 2004.

\bibitem{Ba09}
John Baldwin.
\newblock Perspectives on expansions: Stability/nip, Presenatation slides,
  Banff International Research Station. 2009. Available at
  https://www.birs.ca/workshops/2009/09w5113/files/Baldwin.pdf.

\bibitem{BSh01}
John~T. Baldwin and Saharon Shelah.
\newblock Model companions of taut for stable t.
\newblock {\em Notre Dame J. Formal Log.}, 42:129--142, 2001.

\bibitem{Bays15}
Martin Bays.
\newblock The group configuration theorem,
  https://ivv5hpp.uni-muenster.de/u/baysm/misc/groupConfiguration/groupConfiguration.pdf.
  2015.

\bibitem{BYPV02}
Itay Ben-Yaacov, Anand Pillay, and Evgueni Vassiliev.
\newblock Lovely pairs of models.
\newblock {\em Annals of Pure and Applied Logic}, 122(1):235--261, 2003.

\bibitem{BYTW02}
Itay Ben-Yaacov, Ivan Tomasic, and Frank~O. Wagner.
\newblock {The Group Configuration in Simple Theories and Its Applications}.
\newblock {\em Bulletin of Symbolic Logic}, 8(2):283 -- 298, 2002.

\bibitem{Bou89}
Elisabeth Bouscaren.
\newblock Dimensional order property and pairs of models.
\newblock {\em Annals of Pure and Applied Logic}, 41(3):205--231, 1989.

\bibitem{BL22}
Samuel Braunfeld and Michael~C Laskowski.
\newblock Worst-case expansions of complete theories.
\newblock {\em Model Theory}, 1(1):15--30, 2022.

\bibitem{Bu91}
Steven Buechler.
\newblock Pseudoprojective strongly minimal sets are locally projective.
\newblock {\em The Journal of Symbolic Logic}, 56(4):1184--1194, 1991.

\bibitem{CZ01}
Enrique Casanovas and Martin Ziegler.
\newblock Stable theories with a new predicate.
\newblock {\em The Journal of Symbolic Logic}, 66(3):1127--1140, 2001.

\bibitem{CP98}
Z.~Chatzidakis and A.~Pillay.
\newblock Generic structures and simple theories.
\newblock {\em Annals of Pure and Applied Logic}, 95(1-3):71--92, 1998.

\bibitem{Cha02}
Zo\'e Chatzidakis.
\newblock Properties of forking in \{\$\o{}mega\$\}-free pseudo-algebraically
  closed fields.
\newblock {\em Journal of Symbolic Logic}, 67(3):957--996, 2002.

\bibitem{CH99}
Zo{\'e} Chatzidakis and Ehud Hrushovski.
\newblock Model theory of difference fields.
\newblock {\em Transactions of the American Mathematical Society},
  351(8):2997--3071, 1999.

\bibitem{Cha99}
Zoé Chatzidakis.
\newblock {\em Simplicity and Independence for Pseudo-Algebraically Closed
  Fields}, page 41–62.
\newblock London Mathematical Society Lecture Note Series. Cambridge University
  Press, 1999.

\bibitem{Che14}
Artem Chernikov.
\newblock Theories without the tree property of the second kind.
\newblock {\em Ann. Pure Appl. Log.}, 165:695--723, 2014.

\bibitem{CR15}
Artem Chernikov and Nicholas Ramsey.
\newblock On model-theoretic tree properties.
\newblock {\em Journal of Mathematical Logic}, 16(2):1650009, 2016.

\bibitem{CS21}
Artem Chernikov and Sergei Starchenko.
\newblock Model-theoretic elekes–szabó in the strongly minimal case.
\newblock {\em Journal of Mathematical Logic}, 21(02):2150004, 2021.

\bibitem{Co15}
Gabriel Conant.
\newblock An axiomatic approach to free amalgamation.
\newblock {\em The Journal of Symbolic Logic}, 82(2):648–671, 2017.

\bibitem{CoK19}
GABRIEL CONANT and ALEX KRUCKMAN.
\newblock Independence in generic incidence structures.
\newblock {\em The Journal of Symbolic Logic}, 84(2):750–780, 2019.

\bibitem{CL20}
Gabriel Conant and Michael~C. Laskowski.
\newblock Weakly minimal groups with a new predicate.
\newblock {\em Journal of Mathematical Logic}, 20(2):2050011, 2020.

\bibitem{D19}
Christian D'Elbée.
\newblock Forking, imaginaries, and other features of $\text {ACFG}$.
\newblock {\em The Journal of Symbolic Logic}, 86(2):669–700, 2021.

\bibitem{DKR22}
Jan Dobrowolski, Byunghan Kim, and Nicholas Ramsey.
\newblock Independence over arbitrary sets in nsop1 theories.
\newblock {\em Annals of Pure and Applied Logic}, 173(2):103058, 2022.

\bibitem{DS04}
Mirna D\v{z}amonja and Saharon Shelah.
\newblock On $\lhd^{*}$-maximality.
\newblock {\em Annals of Pure and Applied Logic}, 125(1-3):119--158, 2004.

\bibitem{D18}
Christian d’Elb{\'e}e.
\newblock Generic expansions by a reduct.
\newblock {\em Journal of Mathematical Logic}, 21(03):2150016, 2021.

\bibitem{Ev02}
David Evans.
\newblock $\aleph_0$ -categorical structures with a predimension.
\newblock {\em Annals of Pure and Applied Logic - APAL}, 116:157--186, 08 2002.

\bibitem{EW09}
David~E. Evans and Mark Wing~Ho Wong.
\newblock {Some remarks on generic structures}.
\newblock {\em Journal of Symbolic Logic}, 74(4):1143 -- 1154, 2009.

\bibitem{Ga05}
Jerry Gagelman.
\newblock Stability in geometric theories.
\newblock {\em Annals of Pure and Applied Logic}, 132(2):313--326, 2005.

\bibitem{Hr92}
Ehud Hrushovski.
\newblock Unimodular minimal structures.
\newblock {\em Journal of the London Mathematical Society}, s2-46(3):385--396,
  1992.

\bibitem{KR17}
Itay Kaplan and Nicholas Ramsey.
\newblock On kim-independence.
\newblock {\em Journal of the European Mathematical Society}, 22, 02 2017.

\bibitem{KR19}
Itay Kaplan and Nicholas Ramsey.
\newblock Transitivity of kim-independence.
\newblock {\em Advances in Mathematics}, 379:107573, 2021.

\bibitem{Ki00}
Hirotaka Kikyo.
\newblock Model companions of theories with an automorphism.
\newblock {\em Journal of Symbolic Logic}, 65(3):1215--1222, 2000.

\bibitem{KikS02}
Hirotaka Kikyo and Saharon Shelah.
\newblock {The strict order property and generic automorphisms}.
\newblock {\em Journal of Symbolic Logic}, 67(1):214 -- 216, 2002.

\bibitem{KCHY21}
Pablo~Cubides Kovacsics, Martin Hils, and Jinhe Ye.
\newblock Beautiful pairs, , preprint. Available at
  https://arxiv.org/abs/2112.00651. 2021.

\bibitem{KR18}
Alex Kruckman and Nicholas Ramsey.
\newblock Generic expansion and $\text{Skolemization}$ in $\mathrm{NSOP}_{1}$
  theories.
\newblock {\em Annals of Pure and Applied Logic}, 169(8):755--774, aug 2018.

\bibitem{KTW22}
Alex Kruckman, Minh~Chieu Tran, and Erik Walsberg.
\newblock Interpolative fusions ii: Preservation results, preprint. Available
  at https://arxiv.org/abs/2206.08512. 2022.

\bibitem{La13}
Michael~C. Laskowski.
\newblock {Mutually algebraic structures and expansions by predicates}.
\newblock {\em Journal of Symbolic Logic}, 78(1):185 -- 194, 2013.

\bibitem{Lee22}
Hyoyoon Lee, Personal communication. Feb. 10, 2023.

\bibitem{NSOP2}
Scott Mutchnik.
\newblock On $\mathrm{NSOP}_2$ theories, preprint. Available at
  https://arxiv.org/abs/2206.08512. 2022.

\bibitem{GFA}
Scott Mutchnik.
\newblock Conant-independence in generalized free amalgamation theories,
  preprint. Available at https://arxiv.org/abs/2210.07527. 2022.

\bibitem{INDNSOP3}
Scott Mutchnik.
\newblock Properties of independence in $\mathrm{NSOP}_{3}$ theories, preprint.
  Available at https://arxiv.org/abs/2305.09908. 2023.

\bibitem{Pat06}
Rehana Patel.
\newblock A family of countably universal graphs without $\mathrm{NSOP}_{4}$,
  preprint. 2006.

\bibitem{Pet19}
Ya'acov Peterzil.
\newblock An o-minimalist view of the group configuration, 2019.

\bibitem{P96}
A.~Pillay.
\newblock {\em Geometric Stability Theory}.
\newblock Oxford logic guides. Clarendon Press, 1996.

\bibitem{Pil01}
Anand Pillay.
\newblock Notes on model companions of stable theories with an automorphism,
  preprint. 2001.

\bibitem{Poi83}
Bruno Poizat.
\newblock Paires de structures stables.
\newblock {\em Journal of Symbolic Logic}, 48(2):239–249, 1983.

\bibitem{Poi01}
Bruno Poizat.
\newblock {\em Stable groups}.
\newblock Mathematical surveys and monographs, volume 87. American Mathematical
  Society, Providence, Rhode Island, 2001.

\bibitem{Pol05}
Dominika Polkowska.
\newblock Bounded pseudo-algebraically closed structures, Ph.D. thesis. 2005.

\bibitem{Sh90}
Saharon Shelah.
\newblock {\em Classification theory: and the number of non-isomorphic models}.
\newblock Elsevier, 1990.

\bibitem{She95}
Saharon Shelah.
\newblock Toward classifying unstable theories.
\newblock {\em Annals of Pure and Applied Logic}, 80(3):229--255, 1996.

\bibitem{Va03}
Evgueni Vassiliev.
\newblock Generic pairs of su-rank 1 structures.
\newblock {\em Annals of Pure and Applied Logic}, 120(1):103--149, 2003.

\bibitem{Wan22}
Paul~Z. Wang.
\newblock The group configuration theorem for generically stable types, 2022.

\bibitem{W75}
Peter~M. Winkler.
\newblock Model-completeness and skolem expansions.
\newblock {\em Model Theory and Algebra}, page 408–463, 1975.

\bibitem{BYC07}
Itaï~Ben Yaacov and Artem Chernikov.
\newblock An independence theorem for $\mathrm{NTP}_{2}$ theories.
\newblock {\em The Journal of Symbolic Logic}, 79(1):135–153, 2014.

\end{thebibliography}

\end{document}